\numberwithin{equation}{section}
\theoremstyle{definition}
\def\esssup{\mathrm{ess\,sup}}
\newcommand{\R}{\bf{R}}
\def\D{{\bf{D}}}
\def\0{{\bf{0}}}
\def\q{{\bf{q}}}
\def\bu{{\bf{u}}}
\def\U{{\bf{U}}}
\def\e{{\bf{e}}}
\def\bzeta{{\boldsymbol{\zeta}}}
\def\<{{\langle}}
\def\>{{\rangle}}
\def\bs{{\boldsymbol{\sigma}}}
\def\btau{{\boldsymbol{\tau}}}
\def\bI{{\bf{I}}}
\def\div{\grad\cdot}
\def\grad{\nabla}
\def\bbeta{\boldsymbol{\eta}}
\def\bbV{{\bf V}}
\def\n{{\bf n}}
\def\X{{\bf{X}}}
\def\U{{\bf{U}}}
\def\bxi{{\boldsymbol{\xi}}}
\def\f{{\bf{f}}}
\def\bbphi{{\boldsymbol{\phi}}}
\def\bchi{\boldsymbol{\chi}}
\def\d{\partial}
\def\dist{\mathrm{dist}\,}
\def\MEE{{\bf{e}}}
\newcommand{\Om}{\Omega}
\newcommand{\bbv}{{\bf v}}
\newtheorem{lemma}{Lemma}[section]
\newtheorem{theorem}{Theorem}[section]
\newtheorem{remark}{Remark}[section]
\begin{document}
\title{A Lagrange multiplier formulation for the fully dynamic Navier-Stokes-Biot system}

\author{
  Xing Wang\thanks{Department of Mathematics, University of Pittsburgh, Pittsburgh, PA 15260, USA; email: {\tt xiw117@pitt.edu, yotov@math.pitt.edu}. Partially supported by NSF grants DMS-2111129 and DMS-2410686.}
  \and Ivan Yotov\footnotemark[1]}

\date{December 10, 2024}
\maketitle
\begin{abstract}
We study a mathematical model of fluid -- poroelastic structure interaction and its numerical solution.
The free fluid region is governed by the unsteady incompressible Navier-Stokes equations, while the poroelastic region is modeled by the Biot system of poroelasticity. The two systems are coupled along an interface through continuity of normal velocity and stress and the Beavers-Joseph-Saffman slip with friction condition. The variables in the weak formulation are velocity and pressure for Navier-Stokes, displacement for elasticity and velocity and pressure for Darcy flow. A Lagrange multiplier of stress/pressure type is employed to impose weakly the continuity of flux. Existence, uniqueness, and stability of a weak solution is established under a small data assumption. A fully discrete numerical method is then developed, based on backward Euler time discretization and finite element spatial approximation. We establish solvability, stability, and error estimates for the fully discrete scheme. Numerical experiments are presented to verify the theoretical results and illustrate the performance of the method for an arterial flow application.
\end{abstract}

\section{Introduction}

Modeling coupled incompressible free fluid flow with flow in deformable porous media through an interface, referred to as fluid -- poroelastic structure interaction (FPSI), has received increased attention in recent years. This problem has numerous applications in the geosciences, biomedicine, and industrial engineering, including groundwater flow and transport through deformable fractured aquifers, hydraulic fracturing, arterial flows, interfacial flows in the eye or the brain, design of artificial organs, and industrial filters, to name a few. The free fluid flow is governed by the Stokes or the Navier-Stokes equations and the flow in the poroelastic medium is modeled by the Biot system of poroelasticity. Physically consistent interface conditions couple the two systems. These include continuity of normal velocity, continuity of normal stress, and the Beavers-Joseph-Saffman slip with friction condition. The model exhibits features of coupled free fluid and porous media flows, with Stokes-Darcy \cite{LaySchYot,DMQ,ErvJenSun,GalSar,ry2005,gos2011,cao2010coupled} and Navier-Stokes-Darcy \cite{DQ-NSD,Cesm-NS-Darcy-time-dep,GirRiv-NSD,badea2010-NSD} coupling, and fluid-structure interaction \cite{galdi2010fundamental,bazilevs2013computational,bungartz2006fluid,richter2017fluid}.

The vast majority of the work on the mathematical and numerical analysis of FPSI has been on the Stokes-Biot model \cite{FPSI-LM,show2005,bukavc2015operator,bukavc2015partitioning,Buk-Yot-Zun-fracture,ambartsumyan2019nonlinear,fpsi-transport,Stokes-Biot-eye,fpsi-mixed-elast,fpsi-msfmfe,Bociu-etal-2021,Cesm-Chid,HDG-SB,Boon-precond-SB,hyper-SB,SB-nonlin-geom}. On the other hand, the Navier-Stokes-Biot model, which is suitable for faster free fluid flows, such as arterial flows and coupled surface-subsurface hydrological systems, has been much less studied. The analysis of the model is more challenging, due to the nonlinearity in the convection term in the Navier-Stokes equations. Early computational studies are presented in \cite{badia2009coupling}, where monolithic and domain decomposition solvers are investigated. A non-iterative splitting scheme for a Navier-Stokes-Biot model with pressure Darcy formulation is developed in \cite{bukavc2015operator} and extended in \cite{Bukac-JCP} to coupling between fluid, elastic, and poroelastic structure. Both works present computational results for arterial flows with the Navier-Stokes-Biot model, but the analysis is restricted to the Stokes-Biot model. The first mathematical analysis for the Navier-Stokes-Biot system is presented in \cite{cesmelioglu2017analysis}. There, well posedness is established for the fully dynamic model using a velocity-pressure Navier-Stokes formulation, displacement elasticity formulation, and pressure Darcy formulation. A cell-centered finite volume method for the Navier-Stokes-Biot problem is developed in \cite{cly2020}. Coupling of the Navier-Stokes-Biot model with transport is studied in \cite{NSB-transport}. In \cite{augmented}, an augmented fully mixed formulation for the quasistatic Navier-Stokes-Biot system is considered, with pseudostress-velocity for Navier-Stokes, stress-displacement-rotation for elasticity, and velocity-pressure for Darcy. Analysis for the weak formulation and its semi-discrete continuous in time mixed finite element approximation is presented. Computational studies comparing the augmented formulation with a non-augmented Banach space formulation are presented in \cite{MFE-NSB}. Most recently, a fully discrete hybridizable discontinuous Galerkin method for the Navier-Stokes-Biot problem is developed and analyzed in \cite{CLR-NSB-HDG}. The model is based on the time-dependent Navier-Stokes equations in the velocity-pressure formulation, and the quasistatic Biot system in a displacement-total pressure-velocity-pressure formulation.

In this paper we present mathematical and numerical analysis for the fully dynamic Navier-Stokes-Biot system using velocity-pressure formulation for Navier-Stokes, displacement formulation for elasticity, and velocity-pressure formulation for Darcy. This formulation, which has not been studied in the literature, is attractive due to its relative simplicity with small number of variables. The mixed Darcy formulation provides the ability to compute accurate locally mass conservative Darcy velocity, which is critical for flow in porous media applications. Since velocity-pressure formulations are used both for the Navier-Stokes and the Darcy equations, the continuity of normal velocity is an essential interface condition. To impose it, we introduce a Lagrange multiplier of stress/pressure type.

The first main result in this work is well posedness analysis of the weak formulation. Motivated by the analysis in \cite{cesmelioglu2017analysis}, we use a Galerkin approach. The system is semi-discretized in space, which results in a system of differential algebraic equations. We further eliminate the Darcy velocity and obtain a system of ordinary differential equations (ODEs). The ODE theory is used to establish existence and uniqueness of a solution on a small time interval. We then derive stability bounds under a small data assumption and extend the result to the entire time interval. We finally use a compactness argument to pass to the limit and obtain existence and uniqueness of a weak solution.

The second main result is development and analysis of a fully discrete numerical scheme for the approximation of the weak solution. The method is based on backward Euler time discretization with time-lagged convective velocity. As a result, only a linear algebraic system has to be solved at each time step. The spatial discretization uses classical velocity-pressure finite elements for Navier-Stokes, continuous Lagrangian finite elements for the displacement, and classical mixed finite element spaces for the Darcy velocity-pressure pair. Since the well posedness analysis of the scheme requires showing that the fluid velocity is contained in a ball with sufficiently small radius, the analysis is done using an intricate induction argument, where solvability, energy estimates, and a smallness velocity bound are established simultaneously at each time step. We then establish error estimates for the fully discrete numerical solution, showing optimal order convergence for all variables in their natural norms. 

The rest of the paper is organized as follows. The mathematical model and the weak formulation are presented in Section~\ref{section2}. Section~\ref{section3} is devoted to the well posedness of the weak formulation. In section \ref{section4}, we present the fully discrete numerical scheme and establish the existence, uniqueness, and stability of the fully discrete solution. Error analysis is performed in section~\ref{section5}. Numerical experiments are presented in Section!\ref{section6}. They include a test verifying the theoretical convergence results and an application to blood flow with physically realistic parameters. We end with conclusions in Section~\ref{section7}.

\section{Navier-Stokes-Biot model problem}\label{section2}

Let $\Omega \subset {\R}^d$, $d = 2,3$, be a union of non-overlapping polygonal
regions $\Om_f$ and $\Om_p$. Here $\Om_f$ is a free fluid region governed by the time-dependent Navier-Stokes equations and $\Om_p$ is a poroelastic
region governed by the Biot system. Let $\Gamma_{fp} = \partial{\Om_f}\cap\partial{\Om_p}$ be the interface between the two regions. Let $(\bu_\star,p_\star)$ be the velocity-pressure pair in $\Om_\star, \star=f,p$, $\bbeta_p$ be the displacement in $\Om_p$, $\rho_f$ be the fluid density, $\mu_f$ be the fluid viscosity, and ${\bf f}_\star$ be a body force.
Let $\D(\bu_{f})$ and $\bs_f(\bu_{f},p_f)$ denote the deformation rate tensor and the Cauchy stress tensor, respectively:
\begin{equation}
\D(\bu_{f})=\frac{1}{2}\left(\grad\bu_{f}+\grad{\bu_{f}^{T}}\right), \qquad \bs_f(\bu_f,p_f) = -p_f \bI + 2\mu_f \D(\bu_f).
\end{equation}
In the free fluid region $\Om_f$, $(\bu_f,p_f)$ satisfy the incompressible Navier-Stokes equations
\begin{alignat}2
  \rho_f \partial_{t}{\bu}_{f} - \grad\cdot \bs_f(\bu_{f},p_f)
  + \rho_f\bu_{f}\cdot \grad\bu_{f} & = {\bf{f}}_f \quad & \mbox{in } \Om_f \times (0,T], \label{stokes1} \\
\div \bu_f &=  0  \quad &\mbox{in } \Om_f \times (0,T], \label{stokes2}
\end{alignat}
where $T>0$ is the final time. Next, let $\bs_e(\bbeta_{p})$ and $\bs_p(\bbeta_{p}, p_p)$ be the elastic and poroelastic stress tensors, respectively:
\begin{equation}
  \bs_e(\bbeta_p)=\lambda_p(\div\bbeta_p){\bf{I}}+2\mu_p\D(\bbeta_p), \qquad
\bs_p(\bbeta_p,p_p)=\bs_e(\bbeta_p)-\alpha p_p {\bf{I}}.
\end{equation}
The fully dynamic Biot system \cite{cesmelioglu2017analysis} in $\Omega_p$ is as follows:
\begin{alignat}2
\rho_{p}\partial_{tt}{\bbeta}_p-\div\bs_p(\bbeta_p,p_p)&= \f_p \quad & \mbox{in } \Om_p\times(0,T], \label{Biot1}\\
\mu_f K^{-1} \bu_p +\grad p_p &= 0 \quad & \mbox{in } \Om_p\times(0,T], \label{Biot2}\\
\partial_{t}(s_0{p}_p + \alpha \div \bbeta_p) + \div\bu_p&= q_p \quad & \mbox{in } \Om_p\times(0,T], \label{Biot3}
\end{alignat}
where $0<\lambda_{min}\leq \lambda_p\leq \lambda_{max}$ and $0<\mu_{min}\leq \mu_p\leq \mu_{max}$ are the Lam\'{e} coefficients, $\alpha$ is the Biot-Willis constant, $s_0 \ge 0$ is a storage coefficient, $q_p$ is an external source or sink term, and $K$ is a symmetric and uniformly 
positive definite rock permeability tensor, satisfying, for some constants 
$0 < k_{min} \le k_{max}$, $\forall \, {\bxi}  \in {\R}^{d}$,
\begin{equation}\label{K}
k_{min} \bxi^T\bxi \leq \bxi^T K(\bold{x})\bxi \leq k_{max}\bxi^T\bxi,\,\ \,\ \forall \bold{x} \in \Omega_{p}.
\end{equation}
The interface conditions coupling the two regions are:
\begin{alignat}2
  \bu_f\cdot\n_f + \left(\partial_{t}{\bbeta}_p+ \bu_p\right)\cdot\n_p &= 0
  \quad & \mbox{on } \Gamma_{fp}\times (0,T],
\label{eq:mass-conservation}
\\
- (\bs_f \n_f)\cdot\n_f & =  p_p
  \quad & \mbox{on } \Gamma_{fp}\times (0,T],
\label{balance-stress} \\
\bs_f\n_f + \bs_p\n_p& = 0
  \quad & \mbox{on } \Gamma_{fp}\times (0,T],
\label{second}
\\
 - (\bs_f\n_f)\cdot\btau_{f,j} &= \mu_f\alpha_{BJS}\sqrt{K_j^{-1}}
 \left(\bu_f - \partial_{t}{ \bbeta}_p\right)\cdot\btau_{f,j}
   \quad & \mbox{on } \Gamma_{fp}\times (0,T].
\label{Gamma-fp-1}
\end{alignat}
where $\n_f$ and $\n_{p}$ are the outward unit normal vectors to $\partial\Om_{f}$ and $\partial\Om_{p}$, respectively, $\btau_{f,j}$, $1\leq j\leq d-1$, is an orthogonal system of unit tangential vectors on the interface $\Gamma_{fp}$, $K_j=(K\btau_{f,j})\cdot\btau_{f,j}$, and $\alpha_{BJS}\geq 0$ is friction coefficient. The conditions \eqref{eq:mass-conservation}--\eqref{Gamma-fp-1} represent, respectively,
mass conservation, balance of normal stress, balance of momentum, and the  Beavers-Joseph-Saffman (BJS) slip with friction condition. We consider homogeneous boundary conditions for simplicity:
$$
\bu_f = 0 \mbox{ on } \Gamma_f \times (0,T], 
  \quad \bbeta_p = 0 \quad \mbox{on } \Gamma_p\times (0,T],
$$
$$
p_p= 0, \mbox{ on } \Gamma_p^D\times (0,T], \quad
\bu_p \cdot \n_p = 0 \quad \mbox{on } \Gamma_p^N\times (0,T],
$$
where $\Gamma_f = \d\Omega_f\cap\d\Omega$, $\Gamma_p = \d\Omega_p\cap\d\Omega$, and $\Gamma_p = \Gamma_p^D \cup \Gamma_p^N$. To avoid the issue with restricting the mean value of the pressure, we assume that $|\Gamma_p^D| > 0$. We further assume that $\Gamma^D_p$ is not adjacent to the interface $\Gamma_{fp}$, i.e., $\dist(\Gamma^D_p,\Gamma_{fp}) \geq s$ for some $s > 0$, which is used to simplify the space for $\bu_p\cdot\n_p$. Finally, the system is supplemented by a set of homogeneous initial conditions:
$$
p_p(\mathbf{x},0) = 0, \,\, 
\bbeta_p(\mathbf{x},0) = 0, \,\  \partial_{t}{\bbeta}_p(\mathbf{x},0) = 0 \
\mbox{ in } \Om_p, \quad 
\bu_{f}(\mathbf{x},0)=0 \ \mbox{ in }  \Om_{f}.
$$

We use the standard notation for Sobolev spaces \cite{ciarlet1978finite}. Let $(\cdot,\cdot)_{\Om_{\star}}$ be the $L^2(\Om_{\star})$-inner product as and let $\left\langle\cdot , \cdot \right\rangle_{\Gamma_{fp}}$ be the $L^2(\Gamma_{fp})$-inner product or duality pairing. We will also use
the Bochner spaces and norms
\begin{align*}
& ||\phi||_{L^2(0,T;\X)}^2 := \int_{0}^{T} ||\phi(t)||^2_{\X}\, dt \,,\quad 
||\phi||_{H^1(0,T;\X)}^2 := \int_{0}^{T} \Big( ||\phi(t)||^2_{\X} + ||\partial_t\phi(t)||^2_{\X} \Big)\, dt \,, \\[1ex]
& ||\phi||_{L^\infty(0,T;\X)} := \mathop{\esssup}\limits_{t\in [0,T]} ||\phi(t)||_{\X}\,,\quad
||\phi||_{W^{1,\infty}(0,T;\X)} := \mathop{\esssup}\limits_{t\in [0,T]} \max\left\{
||\phi(t)||_{\X}, ||\partial_t\phi(t)||_{\X} \right\}.
\end{align*}

Let the functional spaces for the solution variables be $\bbV_f\times W_f$ for $(\bu_f,p_f)$, $\bbV_p\times W_p$ for $(\bu_p,p_p)$, and $\X_p$ for $\bbeta_p$, with 
\begin{align*}
\bbV_{f} &= \{ \bbv_f \in H^1(\Om_{f})^d : \bbv_f = 0 \text{ on }\Gamma_f\}, && W_{f} = L^2(\Om_{f}), \nonumber
\\
\bbV_{p} &= \{ \bbv_p \in H({\rm div}; \Om_{p}) : \bbv_p \cdot \n_p = 0 \text{ on }
\Gamma_p^N\},&& W_{p} = L^2(\Om_{p}), \nonumber
\\
\X_{p} &= \{ \bxi_p \in H^1(\Om_{p})^d : \bxi_p= 0 \text{ on } \Gamma_p \}, 
\end{align*}
where $H({\rm div}; \Om_{p})$ is the space of $L^2(\Om_p)^d$-vectors with
divergence in $L^2(\Om_p)$ with a norm
$$
\|\bbv\|_{H({\rm div}; \Om_{p})}^2 = \|\bbv\|_{L^2(\Omega_p)}^2 + \|\div\bbv\|_{L^2(\Omega_p)}^2.
$$

The weak formulation is obtained by multiplying equations \eqref{stokes1}--\eqref{stokes2} and \eqref{Biot1}--\eqref{Biot3} with suitable test functions, integrating by parts, and utilizing the interface and boundary conditions. We introduce the following bilinear forms related to the Navier-Stokes, Darcy, elasticity, and divergence operators:
\begin{align*}
& a_f(\bu_{f},\bbv_{f}):=(2\mu_f\D(\bu_{f}),\D(\bbv_{f}))_{\Om_{f}}, \quad
a_{p}^{d}(\bu_{p},\bbv_{p}):=(\mu_f K^{-1}\bu_{p},\bbv_{p})_{\Om_{p}}, \\
& a_{p}^{e}(\bbeta_{p},\bxi_{p}) := (2\mu_p\D(\bbeta_{p}),\D(\bxi_{p}))_{\Om_{p}}+(\lambda_p \nabla \cdot  \bbeta_{p},\nabla \cdot \bxi_{p})_{\Om_{p}}, \quad
b_{\star}(\bbv,\omega)=-(\nabla \cdot \bbv,\omega)_{\Om_{\star}}.
\end{align*}
Integration by parts for the terms involving $-\grad\cdot \bs_f(\bu_{f},p_f)$ in \eqref{stokes1}, $-\div\bs_p(\bbeta_p,p_p)$ in \eqref{Biot1}, and $\nabla p_p$ in \eqref{Biot2} results in the interface term
$$
I_{\Gamma_{fp}} = - \left\langle\bs_f\n_f,\bbv_f\right\rangle_{\Gamma_{fp}} - \left\langle\bs_p\n_p,\bxi_p\right\rangle_{\Gamma_{fp}}
+ \left\langle p_p,\bbv_p\cdot\n_p\right\rangle_{\Gamma_{fp}}.
$$
Following \cite{FPSI-LM}, we use the balance of normal stress condition \eqref{balance-stress} to introduce the following Lagrange multiplier, which will be used to impose the mass conservation condition \eqref{eq:mass-conservation}:
$$
\lambda = -(\bs_f\n_f)\cdot\n_f = p_p \mbox{ on } \Gamma_{fp}.
$$
Then, using the BJS condition \eqref{Gamma-fp-1} and the balance of momentum condition \eqref{second}, we obtain
$$
I_{\Gamma_{fp}} = a_{BJS}(\bu_{f},\partial_{t}{\bbeta}_{p};\bbv_{f},\bxi_{p}) + 
b_{\Gamma}(\bbv_{f},\bbv_{p},\bxi_{p};\lambda),
$$
where
\begin{equation*}
a_{BJS}(\bu_{f},{{\bbeta}}_{p};\bbv_{f},\bxi_{p})=\sum_{j=1}^{d-1} 
\Big\langle\mu_f\alpha_{BJS}\sqrt{K_j^{-1}}(\bu_f - {\bbeta}_p)
\cdot\bold{\tau}_{f,j},(\bbv_f - \bxi_p)\cdot\bold{\tau}_{f,j} \Big\rangle_{\Gamma_{fp}},
\end{equation*}
\begin{equation*}
b_{\Gamma}(\bbv_{f},\bbv_{p},\bxi_{p};\mu)=\left\langle\bbv_f\cdot\n_f+(\bxi_{p}+\bbv_{p})\cdot\n_p,\mu\right\rangle_{\Gamma_{fp}}.
\end{equation*}
We associate the following seminorm with $a_{BJS}$:
\begin{equation*}
  |\bbv_{f}-\bxi_{p}|^2_{a_{BJS}} = a_{BJS}(\bbv_{f},\bxi_{p};\bbv_{f},\bxi_{p}).
\end{equation*}  
For the continuity of $b_{\Gamma}$, note that $\bbv_{f} \in \bbV_{f}\subset H^1(\Om_{f})^d$, $\bxi_{p} \in \X_p \subset H^1(\Omega_p)^d$, and $\bbv_{p} \in \bbV_{p} \subset H({\rm div}; \Om_{p})$, thus $\bbv_{p}\cdot \n_p|_{\Gamma_{fp}}$ is less regular than $\bbv_{f}\cdot \n_f|_{\Gamma_{fp}}$ and $\bxi_{p}\cdot \n_p|_{\Gamma_{fp}}$. Therefore we need to take $\lambda \in \Lambda:= (\bbV_{p}\cdot \n_p|_{\Gamma_{fp}})'$. Since $\bbv_{p}\cdot \n_p \in H^{-1/2}(\partial \Om_{p})$ and $\dist(\Gamma^D_p,\Gamma_{fp}) \geq s > 0$, it is easy to see that \cite{ambartsumyan2019nonlinear}
$\bbv_{p}\cdot \n_p \in H^{-1/2}(\Gamma_{fp})$. Therefore we set $\Lambda = H^{1/2}(\Gamma_{fp})$.

The Lagrange multiplier weak formulation is as follows.

\medskip
\noindent($\mathbf{LMWF1}$) For $t \in [0,T]$, find
$\bu_f(t) \in \bbV_f$, $p_f(t) \in W_f$, 
$\bu_p(t) \in \bbV_p$, $p_p(t) \in W_p$, 
$\bbeta_p(t) \in \X_p$, and $\lambda(t) \in \Lambda$, such that $\bu_f(0) = 0$,
$p_p(0) = 0$, $\bbeta_p(0) = 0$, $\partial_{t}{ \bbeta}_p(0)=0$, 
and, for a.e. $t \in (0,T]$ and for all
$\bbv_f \in \bbV_f$, $w_f \in W_f$, $\bbv_p \in \bbV_p$, $w_p \in W_p$, 
$\bxi_p \in \X_p$, and $\mu \in \Lambda$,
\begin{align}
  & (\rho_f \partial_{t}{\bu}_f,\bbv_f)_{\Om_{f}} + a_f(\bu_{f},\bbv_{f}) + (\rho_{f}\bu_{f}\cdot \grad \bu_{f},\bbv_f)_{\Om_{f}} + b_f(\bbv_{f},p_f)
  \nonumber \\[1ex]
  & \quad
+ (\rho_{p}\partial_{tt}{\bbeta}_{p},\bxi_{p})_{\Om_{p}} + a_p^e(\bbeta_{p},\bxi_{p})
+ \alpha b_p(\bxi_{p},p_p) + a_p^d(\bu_p,\bbv_p)_{\Om_{p}} + b_p(\bbv_{p},p_{p})
\nonumber \\[1ex]
& \quad
+ a_{BJS}(\bu_{f},\partial_{t}{\bbeta}_{p};\bbv_{f},\bxi_{p}) 
+ b_{\Gamma}(\bbv_{f},\bbv_{p},\bxi_{p};\lambda)
  = (\f_f,\bbv_{f})_{\Om_{f}}+(\f_{p},\bxi_{p})_{\Om_{p}}, \label{LMWF1} \\[1ex]     
& (s_0\partial_{t}{p_p},w_p)_{\Om_{p}} - \alpha b_p(\partial_{t}{\bbeta}_p,w_p) - b_p(\bu_{p},w_p) - b_f(\bu_{f},w_f) = (q_p,w_p)_{\Om_{p}}, \label{LMWF2}\\[1ex]     
& b_{\Gamma}(\bu_{f},\bu_{p},\partial_{t}{ \bbeta}_{p};\mu) = 0.  \label{LMWF3}
\end{align}

In the analysis we will utilize the following well-known inequalities:

\medskip
$\bullet$ (Poincar\'e) There exist $P_f > 0$ and $P_p > 0$ such that 
\begin{align}
  & \forall \bbv_f \in \bbV_{f}, \quad  \|\bbv_f\|_{L^2(\Omega_f)} \leq P_f |\bbv_f|_{H^1(\Omega_f)}, \label{Poincare-f}\\
  & \forall \bxi_p \in \X_p, \quad \|\bxi_p\|_{L^2(\Omega_p)} \leq P_p |\bxi_p|_{H^1(\Omega_f)}; \label{Poincare-p}
\end{align}

$\bullet$ (Sobolev) There exists $S_f>0$ such that 
\begin{align}
  & \forall \bbv_f \in \bbV_f, \quad \|\bbv_f\|_{L^4(\Omega_f)}
  \leq S_f |\bbv_f|_{H^1(\Omega_f)};
\label{sobolev}
\end{align}

$\bullet$ (Korn) There exist $K_f>0$ and $K_p > 0$ such that 
\begin{align}
  & \forall \bbv_f \in \bbV_{f}, \quad |\bbv_f|_{H^1(\Om_{f})} \leq K_f \|\D(\bbv_f)\|_{L^2(\Om_{f})}, \label{Korn-f} \\
  & \forall \bxi_p \in \X_p, \quad |\bxi_p|_{H^1(\Om_{p})} \leq K_p \|\D(\bxi_p)\|_{L^2(\Omega_p)}; \label{Korn-p}
\end{align}

$\bullet$ (Gronwall) Let $u(t)$, $h(t)$, and $f(t)$ be continuous functions and let $g(t)$ be integrable in $[a,T]$. If $h(t) \ge 0$, $g(t) \geq 0$, $f(t)$ is non-decreasing, and
$ u(t) + h(t) \leq f(t) + \int_{a}^{t}g(\tau)u(\tau) d\tau$
$\forall \, t \in [a,T]$, then
\begin{align}
  u(t) + h(t) \leq f(t) \exp\left(\int_{a}^{t}g(\tau)d \tau\right) \quad
  \forall \, t \in [a,T]. 
\label{Gronwall}
\end{align}

Using the above inequalities and properties of the coefficients $\mu_f$, $K$, $\lambda_p$ and $\mu_p$, it is easy to see that the bilinear forms $a_f$, $a_p^{d}$, and $a_p^{e}$ are continuous and coercive in the appropriate norms. In particular, there exist positive constants
$c^{f}$, $c^{p}$, $c^{e}$, $C^{f}$, $C^{p}$, $C^{e}$ such that
\begin{align}
& c^f \|\bbv_f\|^2_{H^1(\Omega_f)} \leq a_f(\bbv_f,\bbv_f), 
&& a_f(\bbv_f,\q_f) \leq C^f \|\bbv_f\|_{H^1(\Omega_f)}\|\q_f\|_{H^1(\Omega_f)}, &
\forall \bbv_f, \q_f \in \bbV_{f}, \label{C1} \\
& c^p \|\bbv_p\|^2_{L^2(\Omega_p)} \leq a^d_p(\bbv_p,\bbv_p), 
&& a^d_p(\bbv_p,\q_p) \leq C_p \|\bbv_p\|_{L^2(\Omega_p)}\|\q_p\|_{L^2(\Omega_p)}, &
\forall \bbv_p, \q_p \in \bbV_{p},  \label{C2}\\
& c^e \|\bxi_p\|^2_{H^1(\Omega_p)} \leq a^e_p(\bxi_p,\bxi_p), 
&& a^e_p(\bxi_p,\bzeta_p) \leq C^e \|\bxi_p\|_{H^1(\Omega_p)}\|\bzeta_p\|_{H^1(\Omega_p)}, &
\forall \bxi_p, \bzeta_p \in \X_{p}, \label{C3}
\end{align}
In addition, $a_{BJS}$ is non-negative and, due to the trace inequality, continuous: there exits $C^{BJS}>0$ such that $\forall \, \bbv_f, \q_f \in \bbV_f$, $\bxi_p, \bzeta_p \in \X_p$,
\begin{equation}\label{a-bjs}
  a_{BJS}(\bbv_{f},\bxi_{p};\q_{f},\bzeta_{p}) \le C^{BJS}
  (\|\bbv_f\|_{H^1(\Omega_f)} + \|\bxi_p\|_{H^1(\Omega_p)})
  (\|\q_f\|_{H^1(\Omega_f)} + \|\bzeta_p\|_{H^1(\Omega_p)}).
\end{equation}
Furthermore, there exist $C^{\Gamma} > 0$ and $C^a > 0$ such that
$\forall \, \bu_f,\bbv_{f} \in \bbV_{f}, \bbv_{p}  \in  \bbV_p,
  \bxi_{p}  \in \X_p, \mu \in \Lambda$,
\begin{align}
  & b_{\Gamma}(\bbv_{f},\bbv_{p},\bxi_{p};\mu) \le C^{\Gamma}
  (\|\bbv_{f}\|_{H^1(\Omega_f)} +  \|\bbv_{p}\|_{H({\rm div}; \Om_{p})} + \|\bxi_{p}\|_{H^1(\Omega_p)}) \|\mu\|_{H^{1/2}(\Gamma_{fp})},
  \label{cont-b-gamma} \\
  & (\rho_{f}\bu_{f}\cdot \grad \bu_{f},\bbv_f)_{\Om_{f}}
  \le C^a \|\bu_f\|_{H^1(\Omega_f)}^2 \|\bbv_f\|_{H^1(\Omega_f)}, \label{cont-adv}
\end{align}
where \eqref{cont-b-gamma} follows from the trace and normal trace inequalities and \eqref{cont-adv} follows from the Cauchy-Schwarz inequality and \eqref{sobolev}.




\section{Well posedness of ($\mathbf{LMWF1}$)}\label{section3}

The analysis of ($\mathbf{LMWF1}$) is done in several steps. First, we consider a divergence-free Navier-Stokes formulation with the the fluid pressure $p_f$ eliminated. We next introduce its Galerkin finite element approximation and show that it can be reduced to a system of ordinary differential equations (ODEs). Using the ODE theory, we establish that the Galerkin problem has a unique solution on a subinterval $[0,T_1]$ of $[0,T]$. After establishing a priori bounds for the Galerkin solution (under a small data condition), we show existence on the entire time interval $[0,T]$, which allows us to pass to the limit from the Galerkin solution to the weak solution of the divergence-free weak formulation. Finally, using an inf-sup condition, we recover and bound the fluid pressure $p_f$.

\subsection{A divergence-free Lagrange multiplier variational formulation}

Let 
\begin{equation*}
	\bbV_{f}^0 = \{ \bbv_f \in \bbV_{f}: \nabla\cdot \bbv_{f}=0\}.
\end{equation*}
Any solution of ($\bold{LMWF1}$) is also a solution of the following divergence-free weak formulation.

\medskip
\noindent($\bold{LMWF2}$) For $t \in [0,T]$, find
$\bu_f(t) \in \bbV_f^0$, 
$\bu_p(t) \in \bbV_p$, $p_p(t) \in W_p$, 
$\bbeta_p(t) \in \X_p$, and $\lambda(t) \in \Lambda$, such that 
$\bu_{f}(0)=0$, $p_p(0) = 0$, $\bbeta_p(0) = 0$, $\partial_{t}{ \bbeta}_p(0)=0$,
and, for a.e. $t \in (0,T]$ and for all
$\bbv_f \in \bbV_f^0$, $\bbv_p \in \bbV_p$, $w_p \in W_p$, 
$\bxi_p \in \X_p$, and $\mu \in \Lambda$,
\begin{align}
  & (\rho_f \partial_{t}{\bu}_f,\bbv_f)_{\Om_{f}} + a_f(\bu_{f},\bbv_{f}) + (\rho_{f}\bu_{f}\cdot \grad \bu_{f},\bbv_f)_{\Om_{f}} 
  \nonumber \\[1ex]
  & \quad
+ (\rho_{p}\partial_{tt}{\bbeta}_{p},\bxi_{p})_{\Om_{p}} + a_p^e(\bbeta_{p},\bxi_{p})
+ \alpha b_p(\bxi_{p},p_p) + a_p^d(\bu_p,\bbv_p)_{\Om_{p}} + b_p(\bbv_{p},p_{p})
\nonumber \\[1ex]
& \quad
+ a_{BJS}(\bu_{f},\partial_{t}{\bbeta}_{p};\bbv_{f},\bxi_{p}) 
+ b_{\Gamma}(\bbv_{f},\bbv_{p},\bxi_{p};\lambda)
  = (\f_f,\bbv_{f})_{\Om_{f}}+(\f_{p},\bxi_{p})_{\Om_{p}}, \label{div-LMWF1} \\[1ex]     
& (s_0\partial_{t}{p_p},w_p)_{\Om_{p}} - \alpha b_p(\partial_{t}{\bbeta}_p,w_p) - b_p(\bu_{p},w_p) = (q_p,w_p)_{\Om_{p}}, \label{div-LMWF2}\\[1ex]     
& b_{\Gamma}(\bu_{f},\bu_{p},\partial_{t}{ \bbeta}_{p};\mu) = 0.  \label{div-LMWF3}
\end{align}
We next consider the well-posedness of ($\bold{LMWF2}$).

\subsection{A Galerkin approximation of $(\bold{LMWF2})$} \label{sec:galerkin}

To construct a finite-dimensional approximation of $(\bold{LMWF2})$, one can consider either subsets of the infinite dimensional Hilbert bases of the functional spaces or a finite element approximation. Since our analysis requires discrete inf-sup condition for the Darcy pressure and the Lagrange multiplier, cf. \eqref{inf-sup-p-lambda}, we employ the latter approach. We note that this construction is solely for the purpose of the analysis. Different choice of finite element spaces in the Navier-Stokes region will be made for the numerical method. Let $\mathcal{T}^f_h$ and $\mathcal{T}^p_h$ be shape-regular and quasi-uniform partitions of $\Om_f$ and $\Om_p$, respectively, both consisting of affine elements with maximal element diameter $h$.  The two partitions may be non-matching at the interface $\Gamma_{fp}$. In $\Omega_f$, let $\bbV_{f,h}\subset \bbV_{f}$ be a conforming finite element space and let $\bbV_{f,h}^0 = \{\bbv_{f,h} \in \bbV_{f,h}: \div \bbv_{f,h} = 0\} \subset \bbV_{f}^0$ be the divergence-free subspace. In $\Omega_p$, let 
$\bbV_{p,h} \times W_{p,h} \subset \bbV_{p} \times W_p$ be any inf-sup stable mixed finite element spaces, such as the Raviart-Thomas or Brezzi-Douglas-Marini spaces \cite{BBF}, with $\bbV_{p,h}$ containing polynomials of degree $k \ge 1$. In turn, let $\X_{p,h}\subset \X_{p}$ be a conforming finite element space. Finally, we take a conforming finite element space $\Lambda_h\subset \Lambda$ with continuous piecewise polynomials of degree $k$ defined on the trace of $\mathcal{T}^p_h$ on $\Gamma_{fp}$. Let the spaces $\bbV_{f,h}^0$, $\bbV_{p,h}$, $W_{p,h}$, $\X_{p,h}$, and $\Lambda_h$ have bases $\{\boldsymbol{\bbphi}_{\bu_{f},i}\}_{i=1}^{N_{u_f}}$, $\{\bbphi_{\bu_{p},i}\}_{i=1}^{N_{u_p}}$, $\{\phi_{p_{p},i}\}_{i=1}^{N_{p_p}}$, $\{\bbphi_{\bbeta_{p},i}\}_{i=1}^{N_{\eta_p}}$, and $\{\phi_{\lambda,i}\}_{i=1}^{N_\lambda}$, respectively.

The following inf-sup condition has been established in \cite[Lemma~4.7]{ErvJenSun}. There exists a constant $\beta_p >0$ independent of $h$ such that
\begin{align}\label{inf-sup-p-lambda}
  \inf_{(0,0) \ne (w_{p,h},\mu_{h})\in W_{p,h}\times\Lambda_{h}}
  \sup_{0 \ne \bbv_{p,h} \in \bbV_{p,h}}
\frac{ b_p(\bbv_{p,h},w_{p,h}) +  \langle \bbv_{p,h}\cdot \n_p,\mu_h\rangle_{\Gamma_{fp}}}
{\|\bbv_{p,h}\|_{H({\rm div}; \Om_{p})} (\|w_{p,h}\|_{L^2(\Omega_p)}  + \|\mu_h\|_{H^{1/2}(\Gamma_{fp})}) }
\geq \beta_p. 
\end{align}

We consider the following finite-dimensional Galerkin problem.

\medskip
\noindent($\bold{GP}$) For $t \in [0,T]$, find
$\bu_{f,h}(t) \in \bbV_{f,h}^0$, 
$\bu_{p,h}(t) \in \bbV_{p,h}$, $p_{p,h}(t) \in W_{p,h}$, 
$\bbeta_{p,h}(t) \in \X_{p,h}$, and $\lambda_h(t) \in \Lambda_h$, such that $\bu_{f,h}(0) = 0$,
$p_{p,h}(0) = 0$, $\bbeta_{p,h}(0) = 0$, $\partial_{t}{ \bbeta}_{p,h}(0)=0$, 
and, for a.e. $t \in (0,T]$ and for all
$\bbv_{f,h} \in \bbV_{f,h}^0$, $\bbv_{p,h} \in \bbV_{p,h}$, $w_{p,h} \in W_{p,h}$, 
$\bxi_{p,h} \in \X_{p,h}$, and $\mu_h \in \Lambda_h$,
\begin{align}
  & (\rho_{f} \partial_{t}{\bu}_{f,h},\bbv_{f,h})_{\Om_{f}} + a_{f}(\bu_{f,h},\bbv_{f,h}) + (\rho_{f}\bu_{f,h}\cdot \grad \bu_{f,h},\bbv_{f,h})_{\Om_{f}} 
  \nonumber \\[1ex]
  & \quad
+ (\rho_{p}\partial_{tt}{\bbeta}_{p,h},\bxi_{p,h})_{\Om_{p}} + a_p^e(\bbeta_{p,h},\bxi_{p,h})
+ \alpha b_p(\bxi_{p,h},p_{p,h}) + a_p^d(\bu_{p,h},\bbv_{p,h})_{\Om_{p}} + b_p(\bbv_{p,h},p_{p,h})
\nonumber \\[1ex]
& \quad
+ a_{BJS}(\bu_{f,h},\partial_{t}{\bbeta}_{p,h};\bbv_{f,h},\bxi_{p,h}) 
+ b_{\Gamma}(\bbv_{f,h},\bbv_{p,h},\bxi_{p,h};\lambda_h)
  = (\f_{f},\bbv_{f,h})_{\Om_{f}}+(\f_{p},\bxi_{p,h})_{\Om_{p}}, \label{semi1} \\[1ex]     
& (s_0\partial_{t}{p_{p,h}},w_{p,h})_{\Om_{p}} - \alpha b_p(\partial_{t}{\bbeta}_{p,h},w_{p,h}) - b_p(\bu_{p,h},w_{p,h}) = (q_p,w_{p,h})_{\Om_{p}}, \label{semi2}\\[1ex]     
& b_{\Gamma}(\bu_{f,h},\bu_{p,h},\partial_{t}{ \bbeta}_{p,h};\mu_h) = 0.  \label{semi3}
\end{align}

\subsection{Reduction of ($\bold{GP}$) to an ODE system}

We write \eqref{semi1}--\eqref{semi3} in a matrix form.
Denote, for $1 \le i,j,l \le k$,
\begin{equation*}
  (M_f)_{i j}=(\bbphi_{\bu_{f},j},\bbphi_{\bu_{f},i})_{\Om_{f}}, \quad
  (M_s)_{i j}=(\bbphi_{\bbeta_{p},j},\bbphi_{\bbeta_{p},i})_{\Om_{p}}, \quad
  (M_p)_{i j}=(\phi_{p_{p},j},\phi_{p_{p},i})_{\Om_{p}},
\end{equation*}
\begin{equation*}
  (A_f)_{i j}=a_f(\bbphi_{\bu_{f},j},\bbphi_{\bu_{f},i}), \quad
  (A_e)_{i j}=a_p^{e}(\bbphi_{\bbeta_{p},j},\bbphi_{\bbeta_{p},i}), \quad
  (A_p)_{i j}=a_p^{d}(\bbphi_{\bu_{p},j},\bbphi_{\bu_{p},i}),
\end{equation*}
\begin{equation*}
N_{ijl} = (\rho_{f}\bbphi_{\bu_f,l}\cdot \grad \bbphi_{\bu_f,j},\bbphi_{\bu_f,i})_{\Om_{f}}, \quad
  (B_{pp})_{i j}=b_p(\bbphi_{\bu_{p},j},\phi_{p_{p},i}), \quad
  (B_{ep})_{i j}=b_p(\bbphi_{\bbeta_{p},j},\phi_{p_{p},i}),
\end{equation*}
\begin{equation*}
(B_{f,\Gamma})_{i j}=b_{\Gamma}(\bbphi_{\bu_{f},j},0,0;\phi_{\lambda,i}), \quad (B_{p,\Gamma})_{i j}=b_{\Gamma}(0, \bbphi_{\bu_{p},j},0;\phi_{\lambda,i}), \quad (B_{e,\Gamma})_{i j}=b_{\Gamma}(0,0, \bbphi_{\bbeta_{p},j};\phi_{\lambda,i}),
\end{equation*}
\begin{equation*}
  (A_{ff}^{BJS})_{ij}=a_{BJS}(\bbphi_{\bu_{f},j},0;\bbphi_{\bu_{f},i},0), \quad (A_{fe}^{BJS})_{ij}=a_{BJS}(\bbphi_{\bu_{f},j},0;0, \bbphi_{\bbeta_{p},i}),
\end{equation*}
\begin{equation*}
  \quad(A_{ee}^{BJS})_{ij}=a_{BJS}(0,\bbphi_{\bbeta_{p},j};0, \bbphi_{\bbeta_{p},i}).
\end{equation*}
In order to obtain an ODE system, we introduce as an explicit variable the structure velocity $\bu_{s,h} = \d_t \bbeta_{p,h}$. Taking
$\bu_{f,h}(\mathbf{x}, t)=\sum_{j=1}^k u_{f,j}(t)\bbphi_{\bu_{f},j}$,
$\bbeta_{p,h}(\mathbf{x}, t)=\sum_{j=1}^k \eta_{p,j}(t)\bbphi_{\bbeta_{p},j}$, 
$\bu_{s,h}(\mathbf{x}, t)=\sum_{j=1}^k u_{s,j}(t)\bbphi_{\bbeta_p,j}$,
$\bu_{p,h}(\mathbf{x}, t)=\sum_{j=1}^k u_{p,j}(t)\bbphi_{\bu_{p},j}$, 
$p_{p,h}(\mathbf{x}, t)=\sum_{j=1}^k p_{p,j}(t)\phi_{p_{p},j}$,
and $\lambda_h(\mathbf{x}, t)=\sum_{j=1}^k \lambda_{j}(t)\phi_{\lambda,j}$ in \eqref{semi1}--\eqref{semi3} and
denoting the time-dependent coefficient vectors as $\overline{{{\bu}}}_f$,
$\overline{{{\bbeta}}}_p$, $\overline{{{\bu}}}_s$, 
$\overline{{{\bu}}}_p$, $\overline{p}_p$, and $\overline{\lambda}$, results in the system of differential-algebraic equations
\begin{align}
&  \rho_fM_f\partial_{t}\overline{{{\bu}}}_f
  +A_f\overline{\bu}_f
  +(N\overline{\bu}_f)\overline{\bu}_f
  +A^{BJS}_{ff}\,\overline{\bu}_f
  + (A^{BJS}_{fe})^T \overline{ \bu}_s
  +B^T_{f,\Gamma}\overline\lambda
  = \mathcal{F}_{\bu_f}, \label{mat1}\\[1ex]
& \d_t \overline{{{\bbeta}}}_p - \overline{{{\bu}}}_s = 0, \label{mat-us}\\[1ex]
&  \rho_{p}M_s\partial_{t}\overline{{\bu}}_s
  +A_e\overline{\bbeta}_p
  +\alpha B_{ep}^T\overline{p}_p
+A_{fe}^{BJS}\overline{\bu}_f
+A_{ee}^{BJS}\overline{ \bu}_s
+ B^T_{e,\Gamma}\overline\lambda
=\mathcal{F}_{\bbeta_p}, \label{mat2} \\[1ex]
& A_p\overline\bu_{p}+B_{pp}^T\overline{p}_p+B_{p,\Gamma}^T\overline\lambda=0,
\label{mat3}\\[1ex]
& s_0M_p\partial_{t}\overline{ p}_p
-\alpha B_{ep}\overline{ \bu}_s
-B_{pp}\overline{\bu}_p
= \mathcal{F}_{p_p}, \label{mat4} \\[1ex]
& B_{f,\Gamma}\overline\bu_f + B_{p,\Gamma}\overline\bu_p
+ B_{e,\Gamma}\overline{ \bu}_s = 0, \label{mat5}
\end{align}
where $(\mathcal{F}_{\bu_f})_i = (\f_{f},\bbphi_{\bu_{f},i})_{\Om_{f}}$,
$(\mathcal{F}_{\bbeta_p})_i = (\f_p,\bbphi_{\bbeta_p,i})_{\Omega_p}$, and
$(\mathcal{F}_{p_p})_i = (q_p,\bbphi_{p_p,i})_{\Omega_p}$, $1 \le i \le k$.
Due to \eqref{K}, the matrix $A_p$ is nonsingular, so $\overline\bu_p$ can be eliminated from \eqref{mat3}:
\begin{equation}\label{25}
  \overline\bu_{p}=-A_p^{-1}B_{pp}^T\overline{p}_p
  -A_p^{-1}B_{p,\Gamma}^T\overline\lambda.
\end{equation}
Substituting \eqref{25} into \eqref{mat4} results in
\begin{eqnarray}\label{mat4-v2}
  s_0M_p\partial_{t}\overline{ p}_p
  - \alpha B_{ep}\overline{ \bu}_s
  + B_{pp}A_p^{-1}B_{pp}^T\overline{p}_p
  + B_{pp}A_p^{-1}B_{p,\Gamma}^T\overline\lambda
= \mathcal{F}_{p_p},  
\end{eqnarray}
Also, substituting \eqref{25} into \eqref{mat5} and differentiating in time gives
\begin{equation}\label{mat5-dt}
  - B_{f,\Gamma}\partial_{t}\overline{\bu}_f
  - B_{e,\Gamma}\partial_{t}\overline{\bu}_s
  + B_{p,\Gamma}A_p^{-1}B_{pp}^T\partial_{t}\overline{{p}}_p
  + B_{p,\Gamma}A_p^{-1}B_{p,\Gamma}^T\partial_{t}\overline{\lambda}=0.
\end{equation}
The system \eqref{mat1}--\eqref{mat2}, \eqref{mat4-v2}, \eqref{mat5-dt}  can be written in a matrix-vector form as
\begin{equation}\label{matrixequation}
E\partial_{t}{ X}(t) + HX(t) + \mathcal{N}(X(t)) = R(t),
\end{equation}
where $\mathcal{N}(X) = (N\overline{\bu}_f)\overline{\bu}_f$,
\begin{align*}
    X(t) = 
\begin{pmatrix}
\overline\bu_f(t) \\
\overline\bbeta_p(t) \\
\overline{\bu}_s(t) \\
\overline p_p(t) \\
{\overline{  \lambda}}(t) 
\end{pmatrix}, \quad
  R(t) = 
\begin{pmatrix}
\mathcal{F}_{\bu_f} \\
0 \\
\mathcal{F}_{\bbeta_p} \\
\mathcal{F}_{p_p} \\
0
\end{pmatrix},
\quad
E &=
\begin{pmatrix}
\rho_{f}M_f  & 0 & 0 & 0 & 0 \\
0 & I & 0  & 0 & 0 \\
0 & 0 & \rho_{p}M_s & 0  & 0 \\
0 & 0 & 0 & s_0M_p & 0 \\
-B_{f,\Gamma} & 0 & -B_{e,\Gamma} & B_{p,\Gamma}A_p^{-1}B_{pp}^T
& B_{p,\Gamma}A_p^{-1} B_{p,\Gamma}^T
\end{pmatrix},
\end{align*}
\begin{align*}     
H &=
\begin{pmatrix}
A_f + A^{BJS}_{ff} & 0  & (A_{fe}^{BJS})^T & 0 & B_{f,\Gamma}^T \\
0 & 0  & -I  & 0 & 0 \\
A_{fe}^{BJS}  & A_e & A_{ee}^{BJS} & \alpha B^T_{ep}  & B_{e,\Gamma}^T \\
0 & 0 & - \alpha B_{ep} & B_{pp}A_p^{-1}B_{pp}^T & B_{pp}A_p^{-1}B_{p,\Gamma}^T \\
0 & 0 & 0 & 0 & 0
\end{pmatrix}.
\end{align*}
Due to \eqref{inf-sup-p-lambda}, $B_{p,\Gamma}^T\overline\mu_h\neq 0$ for $\overline\mu_h\neq0$, implying that $B_{p,\Gamma}A_p^{-1} B_{p,\Gamma}^T$ is positive definite. Therefore the matrix $E$ is lower block-triangular with invertible diagonal blocks, so $E$ is invertible. Then the matrix equation \eqref{matrixequation} can be rewritten as
\begin{equation}\label{DAES}
\partial_{t}{ X}(t)=E^{-1}(R(t) - H X(t) - \mathcal{N}(X(t))) :=g(X(t)).
\end{equation}

\begin{lemma}\label{lem:local-exist}
Let $\f_{f}\in C^0(0,T;L^2(\Om_{f}))$, $\f_{p}\in C^0(0,T;L^2(\Om_{p}))$, and $q_p \in C^0(0,T; L^2(\Om_{p}))$. Then there exists $T_1 \in (0,T]$ such that there exists a unique solution of \eqref{semi1}--\eqref{semi3} in $[0,T_1]$ satisfying $\bu_{f,h}(0) = 0$, $\bu_{p,h}(0) = 0$,
$p_{p,h}(0) = 0$, $\bbeta_{p,h}(0) = 0$, $\partial_{t}{ \bbeta}_{p,h}(0)=0$, and
$\lambda_h(0) = 0$. Moreover, $\bu_{f,h}$, $\bu_{p,h}$, $p_{p,h}$, 
$\d_{t}\bbeta_{p,h}$, and $\lambda_h$ belong to $C^1(0,T_1)$. 
\end{lemma}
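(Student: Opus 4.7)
The plan is to exploit the reduction \eqref{DAES}, $\partial_t X(t) = g(X(t)) := E^{-1}(R(t) - HX(t) - \mathcal{N}(X(t)))$, and apply the classical Picard--Lindel\"of theorem to obtain local existence and uniqueness. Since $E$ has already been shown invertible (via the inf-sup condition \eqref{inf-sup-p-lambda}, which makes the $(5,5)$ block $B_{p,\Gamma}A_p^{-1}B_{p,\Gamma}^T$ positive definite, and the other diagonal blocks are mass matrices or identity), $E^{-1}$ is a fixed constant matrix. The matrix $H$ is constant. The nonlinear term $\mathcal{N}(X) = (N\overline{\bu}_f)\overline{\bu}_f$ is quadratic in the components of $X$, hence $C^\infty$ and in particular locally Lipschitz on bounded subsets of the coefficient space. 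Finally, under the hypothesis $\f_f, \f_p, q_p \in C^0(0,T;L^2)$, the map $t \mapsto R(t)$ is continuous on $[0,T]$. Thus $g \colon [0,T]\times \R^N \to \R^N$ is continuous in $t$ and locally Lipschitz in $X$, and Picard--Lindel\"of yields some $T_1 \in (0,T]$ and a unique $C^1$ solution $X \in C^1([0,T_1])$ to \eqref{DAES} with $X(0)=0$; the regularity $\bu_{f,h}, \d_t\bbeta_{p,h}, p_{p,h}, \lambda_h \in C^1$ is read off the components of $X$.

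Next I would recover $\bu_{p,h}$ from the algebraic relation \eqref{25}, $\overline{\bu}_p = -A_p^{-1}B_{pp}^T \overline{p}_p - A_p^{-1}B_{p,\Gamma}^T\overline{\lambda}$. Since $\overline{p}_p, \overline{\lambda} \in C^1([0,T_1])$, we get $\overline{\bu}_p \in C^1([0,T_1])$. The initial value at $t=0$ follows from $p_{p,h}(0)=0$ and $\lambda_h(0)=0$, which give $\bu_{p,h}(0)=0$ as required. The elastic displacement is recovered by integrating $\d_t\overline{\bbeta}_p = \overline{\bu}_s$ from $\overline{\bbeta}_p(0)=0$, yielding $\bbeta_{p,h} \in C^1([0,T_1])$ with $\d_t\bbeta_{p,h}(0)=\bu_{s,h}(0)=0$.

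The delicate point, which is where I would focus, is that \eqref{DAES} enforces the \emph{differentiated} interface constraint \eqref{mat5-dt} rather than the original constraint \eqref{mat5}, and similarly \eqref{mat4-v2} rather than \eqref{mat4}. I would argue that, because the initial data are all zero, \eqref{mat5} at $t=0$ reduces to $0=0$; integrating \eqref{mat5-dt} in time from $0$ to $t$ and using $\overline{\bu}_f(0)=\overline{\bu}_s(0)=\overline{p}_p(0)=\overline{\lambda}(0)=0$ then recovers \eqref{mat5} on $[0,T_1]$. Substituting \eqref{25} back into \eqref{mat4-v2} and using the recovered $\overline{\bu}_p$ yields \eqref{mat4} on $[0,T_1]$, and \eqref{mat3} holds by construction of $\overline{\bu}_p$ from \eqref{25}. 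Translating \eqref{mat1}--\eqref{mat5} back through the basis expansions then shows that $(\bu_{f,h},\bu_{p,h},p_{p,h},\bbeta_{p,h},\lambda_h)$ solves \eqref{semi1}--\eqref{semi3} on $[0,T_1]$ with the stated initial conditions.

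Uniqueness of the Galerkin solution follows from uniqueness in the Picard--Lindel\"of conclusion together with the fact that the reduction \eqref{mat1}--\eqref{mat5} $\Longrightarrow$ \eqref{DAES} is a bijection on the given initial manifold: any other solution of \eqref{semi1}--\eqref{semi3} with the same initial data produces, via \eqref{25} to eliminate $\overline{\bu}_p$, another solution of the ODE \eqref{DAES} with $X(0)=0$, hence must coincide with $X$. The main (minor) obstacle is really this last bookkeeping step of showing equivalence between the original index-one DAE system and the reduced ODE together with the algebraic relation; once that is handled, the ODE theory does all the analytic work.
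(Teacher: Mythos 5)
Your proposal is correct and follows essentially the same route as the paper: reduce to the ODE \eqref{DAES}, apply standard ODE theory (continuity in $t$, local Lipschitz continuity in $X$ from the quadratic nonlinearity and the continuity bounds) to get a unique $C^1$ solution with $X(0)=0$, then recover $\overline{\bu}_p$ from \eqref{25}, integrate the time-differentiated constraint \eqref{mat5-dt} using the zero initial data to restore \eqref{mat3}--\eqref{mat5}, and set $\bbeta_{p,h}(t)=\int_0^t \bu_{s,h}$. The ``delicate point'' you flag about the differentiated versus original algebraic constraints is handled in the paper exactly as you propose, by integration in time from zero initial conditions.
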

\begin{proof}
  Due to the data assumption and the continuity bounds \eqref{C1}--\eqref{cont-adv}, the function $g(X)$ on the right hand side of \eqref{DAES} is continuous in time and locally Lipschitz in $X$. Therefore it follows from the ODE theory \cite{ODE-book} that there exists a unique maximal solution $X$ of \eqref{DAES} in the interval $[0,T_1]$ for some $0<T_1\leq T$ with $X \in C^1(0,T_1)$ and satisfying $X(0) = 0$. Next, we integrate \eqref{mat5-dt} in time from 0 to any $t \in (0,T_1]$ and use the zero initial conditions, obtaining
$$
  - B_{f,\Gamma}\overline{\bu}_f
  - B_{e,\Gamma}\overline{\bu}_s
  + B_{p,\Gamma}A_p^{-1}B_{pp}^T\overline{{p}}_p
  + B_{p,\Gamma}A_p^{-1}B_{p,\Gamma}^T\overline{\lambda}=0.
$$
We recover $\overline{{\bu}}_p(t)$ from \eqref{25}, concluding that \eqref{mat3}--\eqref{mat5} hold. This results in a solution of \eqref{mat1}--\eqref{mat5}. Finally, setting $\bbeta_{p,h}(t) = \int_0^t \bu_{s,h}$ gives a unique solution of \eqref{semi1}--\eqref{semi3} in $[0,T_1]$
satisfying $\bu_{f,h}(0) = 0$, $\bu_{p,h}(0) = 0$,
$p_{p,h}(0) = 0$, $\bbeta_{p,h}(0) = 0$, $\partial_{t}{ \bbeta}_{p,h}(0)=0$, and
$\lambda_h(0) = 0$, where $\bu_{f,h}$, $\bu_{p,h}$, $p_{p,h}$, 
$\d_{t}\bbeta_{p,h}$, and $\lambda_h$ belong to $C^1(0,T_1)$. 
\end{proof}

\begin{remark}
Due to the presence of the nonlinear term $\mathcal{N}(X)$ in $g(X)$, only local solvability for ($\bold{GP}$) is established. In the following section we obtain a priori bounds for the solution under small data assumption, which allows us to establish global existence and uniqueness in $[0,T]$.
\end{remark}
    
\subsection{Existence, uniqueness, and stability of the solution of ({\bf GP})}\label{well-posedness semi}
\noindent For notational convenience, we define the following quantities that depend on the data:
\begin{align}
  & C_1(t) = \frac{1}{\rho_f}\|\f_f\|_{L^2({\Om_{f}})}^2
+ \frac{1}{\rho_p}\|\f_{p}\|_{L^2(\Om_{p})}^2
+ \frac{2}{k_{min}\beta_p^2}\|q_{p}\|_{L^2(\Om_{p})}^2, \label{C11} \\
& C_2(t) = \frac{1}{\rho_f}\|\partial_{t}{\f}_{f}\|^2_{L^2(\Om_{f})} + \frac{1}{\rho_p}\|\partial_{t}{\f}_{p}\|^2_{L^2(\Om_{p})} +
 \frac{2}{k_{min}\beta_p^2}
\|\partial_{t}{q}_p\|^2_{L^2(\Om_{p})}, \label{C22} \\
& C_3 =
\frac{1}{\rho_{f}}\|\f_{f}(0)\|^2_{L^2(\Om_{f})}
+ \frac{1}{\rho_{p}}\|\f_{p}(0)\|^2_{L^2(\Om_{p})}
+ \frac{1}{s_0}\|q_p(0)\|^2_{L^2(\Om_{p})}.
\label{C33}
\end{align}

\begin{theorem}\label{semi-theorem}
Assume that $\f_{f}\in H^1(0,T;L^2(\Om_{f}))$, $\f_{p}\in H^1(0,T;L^2(\Om_{p}))$, $q_p \in H^1(0,T; L^2(\Om_{p}))$ and that the following small data condition holds:
\begin{align}\label{smalldata}
  \exp(T)\left(\frac23\|C_1\|_{L^1(0,T)} + \frac13\|C_2\|_{L^1(0,T)}
  + \frac13 C_3 \right) + \frac13 \|C_1\|_{L^{\infty}(0,T)} < \frac{\mu_f^3}{4\rho_f^2S_f^4K_f^6}.
\end{align}
Then, the Galerkin problem ({\bf GP}) has a unique solution in the interval $[0,T]$ satisfying $\bu_{f,h}(0) = 0$, $\bu_{p,h}(0) = 0$,
$p_{p,h}(0) = 0$, $\bbeta_{p,h}(0) = 0$, $\partial_{t}{ \bbeta}_{p,h}(0)=0$, and
$\lambda_h(0) = 0$. The solution satisfies
\begin{align}\label{semi-bounds-1}
  & \rho_f\|\bu_{f,h}\|^2_{L^{\infty}(0,T; L^2(\Om_{f}))}
  + 3\mu_f \|\D(\bu_{f,h})\|^2_{L^{2}(0,T; L^2(\Om_{f}))}
    +2|\bu_{f,h} - \partial_{t}{\bbeta}_{p,h}|^2_{L^2(0,T;a_{BJS})}
\nonumber \\
  & \qquad 
  + \rho_p \|\partial_{t}{\bbeta}_{p,h}\|^2_{L^{\infty}(0,T; L^2(\Om_{p}))}
  + 2\|\mu_p^{1/2}\D(\bbeta_{p,h})\|^2_{L^{\infty}(0,T; L^2(\Om_{p}))}
  + \|\lambda_p^{1/2}\nabla \cdot \bbeta_{p,h}\|^2_{L^{\infty}(0,T; L^2(\Om_{p}))} 
   \nonumber \\
  & \qquad
  + s_0\|p_{p,h}\|^2_{L^{\infty}(0,T;L^2(\Omega_p))} 
 + \|K^{-1/2}\bu_{p,h}\|^2_{L^{2}(0,T; L^2(\Om_{p}))} \nonumber \\
  & \qquad
  + \frac{k_{min} \beta_p^2}{2}\|p_{p,h}\|^2_{L^{2}(0,T;L^2(\Om_{p}))}
  + k_{min} \beta_p^2\|\lambda_{h}\|^2_{L^2(0,T; H^{1/2}(\Gamma_{fp}))}
  \nonumber\\
  & \quad
  \leq \exp(T) \|C_1\|_{L^1(0,T)}.
\end{align}
Furthermore, it holds that
\begin{align}\label{semi-bounds-2}
  & \rho_f\|\d_t\bu_{f,h}\|^2_{L^{\infty}(0,T; L^2(\Om_{f}))}
  + 2\mu_f \|\D(\d_t\bu_{f,h})\|^2_{L^{2}(0,T; L^2(\Om_{f}))}
     +2|\d_t\bu_{f,h} - \partial_{tt}{\bbeta}_{p,h}|^2_{L^2(0,T;a_{BJS})}
\nonumber \\
  & \qquad 
 + \rho_p \|\partial_{tt}{\bbeta}_{p,h}\|^2_{L^{\infty}(0,T; L^2(\Om_{p}))}
   + 2\|\mu_p^{1/2}\D(\d_t\bbeta_{p,h})\|^2_{L^{\infty}(0,T; L^2(\Om_{p}))}
  + \|\lambda_p^{1/2}\nabla \cdot \d_t\bbeta_{p,h}\|^2_{L^{\infty}(0,T; L^2(\Om_{p}))} 
   \nonumber \\
  & \qquad
  + s_0\|\d_t p_{p,h}\|^2_{L^{\infty}(0,T;L^2(\Om_{p}))} 
 + \|K^{-1/2}\d_t\bu_{p,h}\|_{L^{2}(0,T; L^2(\Om_{p}))} \nonumber \\
  & \qquad
  + \frac{k_{min} \beta_p^2}{2}\|\d_t p_{p,h}\|^2_{L^{2}(0,T;L^2(\Om_{p}))}
  + k_{min} \beta_p^2\|\d_t\lambda_{h}\|^2_{L^2(0,T;H^{1/2}(\Gamma_{fp}))}
  \nonumber\\
  & \quad
  \leq \exp(T) (\|C_2\|_{L^1(0,T)} + C_3),
\end{align}
\begin{equation}\label{div-bound}
  \|\grad \cdot \bu_{p,h}\|_{L^2(\Om_{p}))}
  \leq s_0\|\partial_{t}p_{p,h}\|_{L^2(\Om_{p})}
  + \alpha \|\div \partial_{t}\bbeta_{p,h}\|_{L^2(\Om_{p})}
  + ||q_p||_{L^2(\Om_{p})} \quad \mbox{for a.e. } t \in (0,T),  
\end{equation}
and
\begin{equation}\label{semi-bounds-3}
\|\D(\bu_{f,h})\|_{L^\infty(0,T; L^2(\Om_{f}))}<\frac{\mu_f}{2\rho_{f}S_f^2K_f^3}.
\end{equation}
\end{theorem}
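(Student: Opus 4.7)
The plan is to combine the local existence from Lemma~\ref{lem:local-exist} with three interlocked a priori estimates---the energy bound \eqref{semi-bounds-1}, the time-differentiated energy bound \eqref{semi-bounds-2}, and the $L^\infty$ smallness bound \eqref{semi-bounds-3}---tied together by a bootstrap argument driven by the small-data hypothesis \eqref{smalldata}. Absorption of the convective nonlinearity requires \eqref{semi-bounds-3}, while \eqref{semi-bounds-3} is in turn controlled by the right-hand sides of \eqref{semi-bounds-1}--\eqref{semi-bounds-2}, so the three bounds must be produced simultaneously. Once they are in place, the a priori bound rules out blow-up of $X(t)$ in \eqref{DAES}, and standard ODE continuation promotes Lemma~\ref{lem:local-exist} to global existence on $[0,T]$; uniqueness is inherited from the local-Lipschitz property of $g$.

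For \eqref{semi-bounds-1}, I first test \eqref{semi1}--\eqref{semi3} with $(\bu_{f,h},\partial_t\bbeta_{p,h},\bu_{p,h},p_{p,h},\lambda_h)$ and sum. The Biot pressure couplings $\alpha b_p(\partial_t\bbeta_{p,h},p_{p,h})$ and $b_p(\bu_{p,h},p_{p,h})$ cancel between \eqref{semi1} and \eqref{semi2}, while $b_\Gamma(\bu_{f,h},\bu_{p,h},\partial_t\bbeta_{p,h};\lambda_h)$ vanishes by \eqref{semi3}. What remains is a time derivative of the kinetic/elastic/storage energy, the dissipative terms $a_f$, $a_p^d$, $|\cdot|_{a_{BJS}}^2$, and the convective contribution $(\rho_f\bu_{f,h}\cdot\grad\bu_{f,h},\bu_{f,h})_{\Om_f}$, which by \eqref{sobolev}--\eqref{Korn-f} is bounded by $\rho_f S_f^2 K_f^3\|\D(\bu_{f,h})\|_{L^2(\Om_f)}^3$. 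After multiplying by~$2$, the a priori assumption \eqref{semi-bounds-3} on a sub-interval absorbs this cubic term into half of $4\mu_f\|\D(\bu_{f,h})\|^2$, leaving the net coercivity $3\mu_f\|\D(\bu_{f,h})\|^2$ appearing in \eqref{semi-bounds-1}. The Darcy pressure and Lagrange multiplier are then recovered by isolating the block of \eqref{semi1} tested only with $\bbv_{p,h}$, which reduces to $b_p(\bbv_{p,h},p_{p,h})+\langle\bbv_{p,h}\cdot\n_p,\lambda_h\rangle_{\Gamma_{fp}}=-a_p^d(\bu_{p,h},\bbv_{p,h})$; the inf-sup condition \eqref{inf-sup-p-lambda} bounds the pair by a multiple of $\|K^{-1/2}\bu_{p,h}\|_{L^2(\Om_p)}$, and Gronwall \eqref{Gronwall} then completes \eqref{semi-bounds-1}.

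For \eqref{semi-bounds-2} I differentiate \eqref{semi1}--\eqref{semi3} in time (legitimate since the Galerkin solution is $C^1$) and repeat the same procedure with test functions $(\partial_t\bu_{f,h},\partial_{tt}\bbeta_{p,h},\partial_t\bu_{p,h},\partial_t p_{p,h},\partial_t\lambda_h)$. The initial quantities $\partial_t\bu_{f,h}(0)$, $\partial_{tt}\bbeta_{p,h}(0)$, $\partial_t p_{p,h}(0)$ come from evaluating \eqref{semi1}--\eqref{semi2} at $t=0$ with the vanishing initial conditions, which is the origin of the $C_3$ contribution in \eqref{C33}. Differentiating the convective term produces $(\partial_t\bu_{f,h}\cdot\grad\bu_{f,h},\partial_t\bu_{f,h})_{\Om_f}+(\bu_{f,h}\cdot\grad\partial_t\bu_{f,h},\partial_t\bu_{f,h})_{\Om_f}$; both pieces are bounded by a constant multiple of $\|\D(\bu_{f,h})\|_{L^\infty(0,T;L^2(\Om_f))}\|\D(\partial_t\bu_{f,h})\|^2_{L^2(\Om_f)}$ and absorbed using \eqref{semi-bounds-3}. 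The divergence bound \eqref{div-bound} follows directly from \eqref{semi2}: because $\grad\cdot\bbV_{p,h}\subset W_{p,h}$ for the RT/BDM choice, the test $w_{p,h}=\grad\cdot\bu_{p,h}$ combined with Cauchy--Schwarz gives the pointwise-in-$t$ estimate.

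Closing the bootstrap for \eqref{semi-bounds-3} is the main obstacle. Starting from $\bu_{f,h}(0)=0$ and the identity
\[
\|\D(\bu_{f,h}(t))\|^2_{L^2(\Om_f)} \;=\; 2\int_0^t (\D(\bu_{f,h}),\D(\partial_t\bu_{f,h}))_{\Om_f}\,ds,
\]
together with a weighted Young inequality, one controls $\|\D(\bu_{f,h}(t))\|^2$ by the right-hand sides of \eqref{semi-bounds-1}--\eqref{semi-bounds-2}; the weights must be chosen so that the resulting combination matches the specific coefficients $2/3$, $1/3$, $1/3$, $1/3$ in \eqref{smalldata}, with the $\|C_1\|_{L^\infty}$ term entering through a pointwise-in-$t$ evaluation of the energy identity to supplement the integrated bound. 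Defining
\[
T^{\ast\ast} \;=\; \sup\bigl\{\,t\in[0,T^\ast):\; \|\D(\bu_{f,h})\|_{L^\infty(0,t;L^2(\Om_f))} < \mu_f/(2\rho_f S_f^2 K_f^3)\,\bigr\},
\]
the smallness condition \eqref{smalldata} guarantees the derived bound stays strictly below $\mu_f^2/(4\rho_f^2 S_f^4 K_f^6)$, so \eqref{semi-bounds-3} cannot be saturated on $[0,T^{\ast\ast})$; continuity forces $T^{\ast\ast}=T^\ast$, and the a priori bound rules out blow-up so $T^\ast=T$. The delicate technical point throughout is the careful bookkeeping of constants needed to make the weighted combination in \eqref{smalldata} tight enough to close the loop.
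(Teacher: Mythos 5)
Your proposal is correct and, for the most part, follows the paper's own path: energy estimate with test functions $(\bu_{f,h},\bu_{p,h},p_{p,h},\partial_t\bbeta_{p,h},\lambda_h)$, absorption of the cubic convective term via the a priori smallness of $\|\D(\bu_{f,h})\|$, recovery of $p_{p,h},\lambda_h$ through \eqref{inf-sup-p-lambda} tested against $\bbv_{p,h}$, Gronwall, the time-differentiated system with initial values $\partial_t\bu_{f,h}(0)$, $\partial_{tt}\bbeta_{p,h}(0)$, $\partial_t p_{p,h}(0)$ read off from the equations at $t=0$ (source of $C_3$), the choice $w_{p,h}=\grad\cdot\bu_{p,h}$ for \eqref{div-bound}, and continuation of the local solution of Lemma~\ref{lem:local-exist}. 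The genuine difference is how you close the bootstrap for \eqref{semi-bounds-3}. The paper argues by contradiction at the first time $\bar T$ where the bound is saturated and uses the energy \emph{equation} \eqref{energy_4} pointwise in $t$, moving all non-dissipative terms to the right and bounding them by the $L^\infty$-in-time quantities from \eqref{semi-bounds-1}--\eqref{semi-bounds-2}; this is precisely where the coefficients $\tfrac23,\tfrac13,\tfrac13$ and the extra $\tfrac13\|C_1\|_{L^\infty(0,T)}$ in \eqref{smalldata} come from. You instead use $\|\D(\bu_{f,h}(t))\|^2=2\int_0^t(\D\bu_{f,h},\D\partial_t\bu_{f,h})$ with Cauchy--Schwarz in time and weighted Young, fed by the $L^2(0,T)$ dissipation bounds. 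This closes: with weight $\epsilon=3/2$ one gets
\begin{align*}
\mu_f\|\D(\bu_{f,h}(t))\|^2_{L^2(\Om_f)}\le \exp(T)\Big(\tfrac12\|C_1\|_{L^1(0,T)}+\tfrac13\|C_2\|_{L^1(0,T)}+\tfrac13 C_3\Big),
\end{align*}
which is dominated by the left-hand side of \eqref{smalldata}, so the smallness hypothesis of the theorem suffices (in fact your route needs slightly less, and does not need $\|C_1\|_{L^\infty}$ at all). Two small points of bookkeeping: your remark that the $\|C_1\|_{L^\infty}$ term "enters through a pointwise-in-$t$ evaluation" conflates the two routes --- in your Cauchy--Schwarz route the weights in Young's inequality are not free enough to reproduce the paper's exact coefficient pattern, but you do not need to, for the reason above; and the absorption arithmetic is a quarter, not a half: the cubic term contributes $\mu_f\|\D(\bu_{f,h})\|^2$ after doubling, leaving $4\mu_f-\mu_f=3\mu_f$, consistent with \eqref{semi-bounds-1}. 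Neither point affects the validity of the argument.
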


\begin{proof}
Existence and uniqueness of a solution of ({\bf GP}) in the subinterval $[0,T_1]$ was established in Lemma~\ref{lem:local-exist}. 
We next verify the bounds \eqref{semi-bounds-1}--\eqref{semi-bounds-3} in the subinterval $[0,T_1]$, which implies global existence and uniqueness in $[0,T]$. 

\medskip
\noindent
{\bf Proof of \eqref{semi-bounds-1}.}
Taking $(\bbv_{f,h},\bbv_{p,h},w_{p,h},\bxi_{p,h},\mu_h) = \left(\bu_{f,h},\bu_{p,h},p_{p,h},\partial_{t} \bbeta_{p,h},\lambda_h\right)$ in
\eqref{semi1}--\eqref{semi3} and summing gives the following energy equation:
\begin{align}
& (\rho_f \partial_{t}{\bu}_{f,h},\bu_{f,h})_{\Om_{f}}
  + a_f(\bu_{f,h},\bu_{f,h})
  + (\rho_{f}\bu_{f,h}\cdot \grad \bu_{f,h},\bu_{f,h})_{\Om_{f}}
+ (\rho_{p}\partial_{tt}{\bbeta}_{p,h},\partial_{t}\bbeta_{p,h})_{\Om_{p}}
\nonumber \\
& \qquad
+ a_p^e(\bbeta_{p,h},\partial_{t}{\bbeta}_{p,h}) 
+ |\bu_{f,h}-\partial_{t}{ \bbeta}_{p,h}|^{2}_{a_{BJS}}
+ a_p^d(\bu_{p,h},\bu_{p,h})
+ (s_0\partial_{t}{p}_{p,h},p_{p,h})_{\Om_{p}} \nonumber \\
& \quad = (\f_f,\bu_{f,h})_{\Om_{f}}+(\f_{p},\partial_{t}{\bbeta}_{p,h})_{\Om_{p}}
+ (q_p,p_{p,h})_{\Om_{p}}. \label{energy_4}
\end{align}
For the nonlinear term we use the Cauchy-Schwarz, Sobolev \eqref{sobolev}, and Korn's \eqref{Korn-f} inequalities:
\begin{equation}\label{nonlinear}
|(\rho_{f}\bu_{f,h}\cdot \grad \bu_{f,h},\bu_{f,h})_{\Om_{f}}|
\leq \rho_{f}\|\grad\bu_{f,h}\|_{L^2(\Om_{f})} \|\bu_{f,h}\|^2_{L^4(\Om_{f})} 
\leq \rho_f  S_f^2K_f^3 \|\D(\bu_{f,h})\|^3_{L^2(\Om_{f})}.   
\end{equation}
Since $\bu_{f,h}(0)=0$ and $\D(\bu_{f,h})$ is continuous in time, there exists a time $\tilde{T}$, $0<\tilde{T}\leq T_1$, such that for any $0\leq t \leq  \tilde{T}$, it holds that
\begin{equation}\label{bound1}
\|\D(\bu_{f,h})\|_{L^2(\Om_{f})} < \frac{\mu_f}{2\rho_{f}S_f^2K_f^3}.
\end{equation}
We will show at the end of the proof that \eqref{bound1} holds in $[0,T_1]$ under the small data condition \eqref{smalldata}. Therefore, \eqref{nonlinear} implies
\begin{equation}\label{nonlin2}
|(\rho_{f}\bu_{f,h}\cdot \grad \bu_{f,h},\bu_{f,h})_{\Om_{f}}| \le \frac{\mu_f}{2} \|\D(\bu_{f,h})\|^2_{L^2(\Om_{f})}.
\end{equation}
The terms on the right hand side in \eqref{energy_4} are bounded using the Cauchy-Schwarz and Young's inequalities:
\begin{equation}\label{rhs-bound}
  \begin{split}
& (\f_f,\bu_{f,h})_{\Om_{f}} + (\f_{p},\partial_{t}{\bbeta}_{p,h})_{\Om_{p}}
    + (q_p,p_{p,h})_{\Om_{p}}
    \le \frac{1}{2\rho_f}\|\f_f\|_{L^2({\Om_{f}})}^2
  + \frac{\rho_f}{2}\|\bu_{f,h}\|_{L^2(\Om_{f})}^2
    \\
    & \qquad
  + \frac{1}{2\rho_p}\|\f_p\|_{L^2(\Omega_p)}^2
  + \frac{\rho_p}{2}\|\partial_{t}{\bbeta}_{p,h}\|_{L^2(\Omega_p)}^2
  + \frac{1}{2\epsilon}\|q_{p}\|_{L^2(\Om_{p})}^2
  + \frac{\epsilon}{2} \|p_{p,h}\|_{L^2(\Om_{p})}^2,
  \end{split}
\end{equation}
where $\epsilon > 0$ will be chosen later. Combining \eqref{energy_4}, \eqref{nonlin2}, and \eqref{rhs-bound} gives
\begin{equation*}
  \begin{split}
    & \frac{1}{2}\frac{d}{dt}\left(\rho_f \|\bu_{f,h}\|^2_{L^2({\Om_{f}})}
    + \rho_{p}\|\partial_{t}{\bbeta}_{p,h}\|^2_{L^2({\Om_{p}})}
    + a_p^e(\bbeta_{p,h},\bbeta_{p,h}) 
    + s_0\|p_{p,h}\|_{L^2(\Om_{p})}^2  \right) \\
    & \qquad + 2\mu_f \|\D(\bu_{f,h})\|_{L^2(\Om_{f})}^2
+|\bu_{f,h}-\partial_{t}{ \bbeta}_{p,h}|^{2}_{a_{BJS}}
+ \|K^{-1/2}\bu_{p,h}\|_{L^2(\Omega_p)}^2
\\ 
& \quad \leq
\frac{\mu_f}{2}\|\D(\bu_{f,h})\|_{L^2(\Om_{f})}^2
+ \frac{1}{2\rho_f}\|\f_f\|_{L^2({\Om_{f}})}^2
+ \frac{\rho_f}{2}\|\bu_{f,h}\|_{L^2(\Om_{f})}^2
  \\
  & \qquad
  + \frac{1}{2\rho_p}\|\f_p\|_{L^2(\Omega_p)}^2
  + \frac{\rho_p}{2}\|\partial_{t}{\bbeta}_{p,h}\|_{L^2(\Omega_p)}^2
  + \frac{1}{2\epsilon}\|q_{p}\|_{L^2(\Om_{p})}^2
  + \frac{\epsilon}{2} \|p_{p,h}\|_{L^2(\Om_{p})}^2.
  \end{split}
\end{equation*}
Integrating in time from $0$ to $t \in (0,T_1]$, we obtain
\begin{align}
& \rho_f \|\bu_{f,h}\|^2_{L^2({\Om_{f}})}
  + \rho_{p}\|\partial_{t}{\bbeta}_{p,h}\|^2_{L^2({\Om_{p}})}
      + a_p^e(\bbeta_{p,h},\bbeta_{p,h}) 
    + s_0\|p_{p,h}\|_{L^2(\Om_{p})}^2 \nonumber \\
    &\qquad
    +\int_{0}^{t} \left(
3\mu_f \|\D(\bu_{f,h})\|_{L^2(\Om_{f})}^2
+ 2|\bu_{f,h}-\partial_{t}{ \bbeta}_{p,h}|^{2}_{a_{BJS}}
+ 2\|K^{-1/2}\bu_{p,h}\|_{L^2(\Omega_p)}^2
\right)
\nonumber \\ 
& \quad \leq \int_{0}^{t}\Big( \frac{1}{\rho_f}\|\f_f\|_{L^2({\Om_{f}})}^2
+ \frac{1}{\rho_p}\|\f_{p}\|_{L^2(\Om_{p})}^2
+ \frac{1}{\epsilon}\|q_{p}\|_{L^2(\Om_{p})}^2
+ \epsilon\|p_{p,h}\|_{L^2(\Om_{p})}^2\Big) \nonumber \\
& \qquad + \int_{0}^{t}\big(\rho_f \|\bu_{f,h}\|^2_{L^2({\Om_{f}})}
+ \rho_p\|\partial_{t}{\bbeta}_{p,h}\|_{L^2(\Om_{p})}^2 \big). \label{integrate1}
\end{align}
Next, we use the inf-sup condition \eqref{inf-sup-p-lambda} and the equation obtained from \eqref{semi1} with test function $\bbv_{p,h}$, to control  $\|p_{p,h}\|_{L^2(\Om_{p})}$ and $\|\lambda_{h}\|_{\Lambda}$:
\begin{equation}\label{inf-sup-bound}
  \begin{split}
  \beta_p(\|p_{p,h}\|_{L^2(\Om_{p})} + \|\lambda_{h}\|_{H^{1/2}(\Gamma_{fp})})   &
  \leq \sup_{0 \ne \bbv_{p,h}\in \bbV_{p,h}}\frac{b_p(\bbv_{p,h},p_{p,h})
    + \left\langle\bbv_{p,h}\cdot\n_{p},\lambda_{h}\right\rangle_{\Gamma_{fp}}}
       {\|\bbv_{p,h}\|_{H({\rm div}; \Om_{p})} } \\
       & = \sup_{0 \ne \bbv_{p,h}\in \bbV_{p,h}} \frac{-a_p^d(\bu_{p,h},\bbv_{p,h})}{\|\bbv_{p,h}\|_{H({\rm div}; \Om_{p})}}
       \le k_{min}^{-1/2}\|K^{-1/2}\bu_{p,h}\|_{L^2(\Omega_p)}.
  \end{split}
\end{equation}
Combining \eqref{integrate1} and \eqref{inf-sup-bound},
taking $\epsilon = \frac{k_{min} \beta_p^2}{2}$, and using Gronwall's inequality \eqref{Gronwall}
for the last two terms in \eqref{integrate1} results in
\begin{align}
& \rho_f \|\bu_{f,h}\|^2_{L^2({\Om_{f}})}
  + \rho_{p}\|\partial_{t}{\bbeta}_{p,h}\|^2_{L^2({\Om_{p}})}
  + a_p^e(\bbeta_{p,h},\bbeta_{p,h}) 
     + s_0\|p_{p,h}\|_{L^2(\Om_{p})}^2 \nonumber \\
    &\qquad
    +\int_{0}^{t} \left(
3\mu_f \|\D(\bu_{f,h})\|_{L^2(\Om_{f})}^2
+ 2|\bu_{f,h}-\partial_{t}{ \bbeta}_{p,h}|^{2}_{a_{BJS}}
+ \|K^{-1/2}\bu_{p,h}\|_{L^2(\Omega_p)}^2
\right) \nonumber \\
& \qquad
+ \int_{0}^{t} \Big( \frac{k_{min} \beta_p^2}{2}\|p_{p,h}\|_{L^2(\Om_{p})}^2
  + k_{min} \beta_p^2 \|\lambda_{h}\|_{H^{1/2}(\Gamma_{fp})}^2 \Big)  
\nonumber \\ 
& \quad \leq \exp(t)\int_{0}^{t}\Big( \frac{1}{\rho_f}\|\f_f\|_{L^2({\Om_{f}})}^2
+ \frac{1}{\rho_p}\|\f_{p}\|_{L^2(\Om_{p})}^2
+ \frac{2}{k_{min}\beta_p^2}\|q_{p}\|_{L^2(\Om_{p})}^2 \Big), \label{integrate2}
\end{align}
which implies \eqref{semi-bounds-1} on $(0,T_1]$.

\medskip
\noindent  
{\bf Proof of \eqref{semi-bounds-2} and \eqref{div-bound}.}
Differentiating in time \eqref{semi1}, \eqref{semi2}, and \eqref{semi3}, and taking $\bbv_{f,h}=\partial_{t}{\bu}_{f,h}$, $\bbv_{p,h}=\partial_{t}{\bu}_{p,h}$, $w_{p,h}=\partial_{t}{p}_{p,h}$, $\bxi_{p,h}=\partial_{tt} \bbeta_{p,h}$, and $\mu_h=\partial_{t}{\lambda}_h$, we obtain
\begin{align}\label{diff}
  & (\rho_f \partial_{tt}{\bu}_{f,h},\partial_{t}{\bu}_{f,h})_{\Om_{f}}
  + a_f(\partial_{t}{\bu}_{f,h},\partial_{t}{\bu}_{f,h})
  + (\rho_{f}\partial_{t}{\bu}_{f,h}\cdot \grad \bu_{f,h},\partial_{t}{\bu}_{f,h})_{\Om_{f}}
  \nonumber\\
  & \qquad 
  + (\rho_{f}{\bu}_{f,h}\cdot \grad \partial_{t}{\bu}_{f,h},\partial_{t}{\bu}_{f,h})_{\Om_{f}}
  +(\rho_{p}\partial_{ttt}{\bbeta}_{p,h},\partial_{tt}\bbeta_{p,h})_{\Om_{p}}
  +a_p^e(\partial_{t}{\bbeta}_{p,h},\partial_{tt}{\bbeta}_{p,h})
  \nonumber\\ 
  & \qquad
  + |\partial_{t}{\bu}_{f,h} - \partial_{tt}{\bbeta}_{p,h}|^2_{a_{BJS}}
  + a_p^d(\partial_{t}{\bu}_{p,h},\partial_{t}{\bu}_{p,h})
  + (s_0 \partial_{tt}{p}_{p,h},\partial_{t}{p}_{p,h})_{\Om_{p}}\nonumber  \\
  & \quad =(\partial_{t}{\f}_f,\partial_{t}{\bu}_{f,h})_{\Om_{f}}
  + (\partial_{t}{\f}_{p},\partial_{tt}{\bbeta}_{p,h})_{\Om_{p}}
  + (\partial_{t}{q}_{p},\partial_{t}{p}_{p,h})_{\Om_{p}}.
\end{align}
For the two nonlinear terms, similarly to \eqref{nonlinear}, using the Cauchy-Schwarz, Sobolev \eqref{sobolev}, and Korn's \eqref{Korn-f} inequalities, we have
\begin{align}\label{control}
&|(\rho_{f}\partial_{t}{\bu}_{f,h}\cdot \grad \bu_{f,h},\partial_{t}{\bu}_{f,h})_{\Om_{f}}| + |(\rho_{f}{\bu}_{f,h}\cdot \grad \partial_{t}{\bu}_{f,h},\partial_{t}{\bu}_{f,h})_{\Om_{f}}| \nonumber \\
&\leq  2\rho_{f}S^2_f K^3_f \|\D(\bu_{f,h})\|_{L^2(\Om_{f})}\|\D(\partial_{t}{\bu}_{f,h})\|^2_{L^2(\Om_{f})} 
\leq \mu_f\|\D(\partial_{t}{\bu}_{f,h})\|^2_{L^2(\Om_{f})},
\end{align}
using \eqref{bound1} in the last inequality. Similarly to \eqref{rhs-bound}, we bound the terms on the right hand side in \eqref{diff} using the Cauchy-Schwarz, and Young's inequalities:
\begin{equation}\label{rhs-bound-dt}
  \begin{split}
    & (\d_t\f_f,\d_t\bu_{f,h})_{\Om_{f}}
    + (\d_t\f_{p},\partial_{tt}{\bbeta}_{p,h})_{\Om_{p}}
    + (\d_t q_p,\d_t p_{p,h})_{\Om_{p}}
    \le \frac{1}{2\rho_f}\|\d_t\f_f\|_{L^2({\Om_{f}})}^2
  + \frac{\rho_f}{2}\|\d_t\bu_{f,h}\|_{L^2(\Om_{f})}^2
    \\
    & \qquad
  + \frac{1}{2\rho_p}\|\d_t\f_p\|_{L^2(\Omega_p)}^2
  + \frac{\rho_p}{2}\|\partial_{tt}{\bbeta}_{p,h}\|_{L^2(\Omega_p)}^2
  + \frac{1}{2\epsilon}\|\d_t q_{p}\|_{L^2(\Om_{p})}^2
  + \frac{\epsilon}{2} \|\d_t p_{p,h}\|_{L^2(\Om_{p})}^2,
  \end{split}
\end{equation}
with $\epsilon > 0$ to be chosen later. Next, the inf-sup condition \eqref{inf-sup-p-lambda} and the time-differentiated equation \eqref{semi1}
with test function $\bbv_{p,h}$ imply
\begin{equation}\label{inf-sup-bound-dt}
  \begin{split}
  & \beta_p(\|\d_t p_{p,h}\|_{L^2(\Om_{p})} + \|\d_t \lambda_{h}\|_{H^{1/2}(\Gamma_{fp})} ) 
  \leq \sup_{0 \ne \bbv_{p,h}\in \bbV_{p,h}}\frac{b_p(\bbv_{p,h},\d_t p_{p,h})
    + \left\langle\bbv_{p,h}\cdot\n_{p}, \d_t\lambda_{h}\right\rangle_{\Gamma_{fp}}}
       {\|\bbv_{p,h}\|_{H({\rm div}; \Om_{p})}  } \\
       & \qquad = \sup_{0 \ne \bbv_{p,h}\in \bbV_{p,h}} \frac{-a_p^d(\d_t\bu_{p,h},\bbv_{p,h})}{\|\bbv_{p,h}\|_{H({\rm div}; \Om_{p})} }
       \le k_{min}^{-1/2}\|K^{-1/2}\d_t\bu_{p,h}\|_{L^2(\Omega_p)}.
  \end{split}
\end{equation}
We combine \eqref{diff}--\eqref{inf-sup-bound-dt}, take $\epsilon = \frac{k_{min} \beta_p^2}{2}$, integrate in time from 0 to $t \in (0,T_1]$,
and use Gronwall's inequality \eqref{Gronwall} for the term
$\displaystyle \int_{0}^{t}\big(\rho_f \|\partial_t\bu_{f,h}\|_{L^2(\Om_{f})}^2  + \rho_p\|\partial_{tt}{\bbeta}_{p,h}\|_{L^2(\Om_{p})}^2\big)$, see the arguments leading to \eqref{integrate2} for details. We obtain
\begin{align}
& \rho_f \|\d_t \bu_{f,h}\|^2_{L^2({\Om_{f}})}
  + \rho_{p}\|\partial_{tt}{\bbeta}_{p,h}\|^2_{L^2({\Om_{p}})}
    + a_p^e(\d_t\bbeta_{p,h},\d_t\bbeta_{p,h}) 
    + s_0\|\d_t p_{p,h}\|_{L^2(\Om_{p})}^2
    \nonumber \\
    &\qquad
    +\int_{0}^{t} \left(
2\mu_f \|\D(\d_t\bu_{f,h})\|_{L^2(\Om_{f})}^2
+ 2|\d_t\bu_{f,h}-\partial_{tt}{ \bbeta}_{p,h}|^{2}_{a_{BJS}}
+ \|K^{-1/2}\d_t\bu_{p,h}\|_{L^2(\Omega_p)}^2
\right)
\nonumber \\ 
& \qquad
+ \int_{0}^{t} \Big( \frac{k_{min} \beta_p^2}{2}\|\d_t p_{p,h}\|_{L^2(\Om_{p})}^2
  + k_{min} \beta_p^2 \|\d_t\lambda_{h}\|_{H^{1/2}(\Gamma_{fp})}^2 \Big)  
\nonumber\\
& \quad \leq \exp(t)\int_{0}^{t}\Big( \frac{1}{\rho_f}\|\d_t\f_f\|_{L^2({\Om_{f}})}^2
+ \frac{1}{\rho_p}\|\d_t\f_{p}\|_{L^2(\Om_{p})}^2
+ \frac{2}{k_{min}\beta_p^2}\|\d_t q_{p}\|_{L^2(\Om_{p})}^2 \Big)
\nonumber\\
& \qquad
+ \exp(t)\left(\rho_f \|\d_t \bu_{f,h}(0)\|^2_{L^2({\Om_{f}})}
  + \rho_{p}\|\partial_{tt}{\bbeta}_{p,h}(0)\|^2_{L^2({\Om_{p}})}
  + s_0\|\d_t p_{p,h}(0)\|_{L^2(\Om_{p})}^2 \right).\label{integrate2-dt}
\end{align}
To complete the estimate we need to control the initial terms. Recall from Lemma~\ref{lem:local-exist} that $\bu_{f,h}$, $\bu_{p,h}$, $p_{p,h}$, 
$\d_{t}\bbeta_{p,h}$, and $\lambda_h$ belong to $C^1(0,T_1)$. Therefore \eqref{semi1}--\eqref{semi3} hold at $t=0$. Taking $\bbv_{f,h}=0$, $\bbv_{p,h}=0$, and $\bxi_{p,h}=\partial_{tt}{\bbeta}_{p,h}(0)$ in \eqref{semi1} at $t = 0$, together with the initial conditions $\bu_{f,h}(0) = 0$, $\bbeta_{p,h}(0)=0$, $\partial_{t}\bbeta_{p,h}(0)=0$, $p_{p,h}(0)=0$, and $\lambda_{h}(0)=0$, gives
\begin{equation*}
  \rho_{p}(\partial_{tt}{\bbeta}_{p,h}(0),\partial_{tt}{\bbeta}_{p,h}(0))_{\Om_{p}}
  = (\f_{p}(0),\partial_{tt}{\bbeta}_{p,h}(0))_{\Om_{p}},
\end{equation*}
implying
\begin{equation}\label{eta-0}
\|\partial_{tt}{\bbeta}_{p,h}(0)\|_{L^2(\Om_{p})}\leq \frac{1}{\rho_{p}}\|\f_{p}(0)\|_{L^2(\Om_{p})}.
\end{equation}
Similarly, taking $\bbv_{f,h}=\partial_{t}{\bu}_{f,h}(0)$, $\bbv_{p,h}=0$, and $\bxi_{p,h}=0$ in \eqref{semi1} and $w_{p,h}=\partial_{t}{p}_{p,h}(0)$ in \eqref{semi2} and 
at $t = 0$ gives
\begin{align}
&  \|\partial_{t}{\bu}_{f,h}(0)\|_{L^2(\Om_{f})}\leq
  \frac{1}{\rho_{f}}\|\f_{f}(0)\|_{L^2(\Om_{f})}, \label{uf-0}\\
  & \|\partial_{t}{p}_{p,h}(0)\|_{L^2(\Om_{p})}
  \leq\frac{1}{s_0}\|q_{p}(0)\|_{L^2(\Om_{p})}. \label{p-0}
\end{align}
Combining \eqref{integrate2-dt}--\eqref{p-0} results in \eqref{semi-bounds-2}.
Bound \eqref{div-bound} follows from taking $w_{p,h} = \grad \cdot \bu_{p,h}$ in \eqref{semi2}.

\medskip
\noindent
{\bf Proof of \eqref{semi-bounds-3}.} We prove the bound by contradiction.
Assume that there exists $\bar{T}\in (0,T_1]$ such that
\begin{equation}\label{assumptionUU}
  \forall \, t \in [0, \bar{T}), \quad	\|\D(\bu_{f,h})(t)\|_{L^2(\Om_{f})}<\frac{\mu_f}{2\rho_{f}S_f^2K_f^3} \quad \mbox{and}
    \quad
\|\D(\bu_{f,h})(\bar{T})\|_{L^2(\Om_{f})} = \frac{\mu_f}{2\rho_{f}S_f^2K_f^3}.
\end{equation}
We will prove that this is impossible under the small data condition \eqref{smalldata}.

We note that, due to assumption \eqref{assumptionUU}, bounds \eqref{semi-bounds-1} and \eqref{semi-bounds-2} hold in $[0,\bar T]$.
Using the energy equation \eqref{energy_4}, we have, for all $t \in [0,\bar T]$,
\begin{align*}
&  2\mu_f \|\D(\bu_{f,h})\|_{L^2(\Om_{f})}^2 + \|K^{-1/2}\bu_{p,h}\|_{L^2(\Omega_p)}^2 \\
  & \quad
  = (\f_f,\bu_{f,h})_{\Om_{f}}
  + (\f_{p},\partial_{t}{\bbeta}_{p,h})_{\Om_{p}}
  + (q_p,p_{p,h})_{\Om_{p}}
  - (\rho_{f}\partial_{t}{\bu}_{f,h},\bu_{f,h})_{\Om_{f}}
  - (\rho_{f}\bu_{f,h}\cdot \grad \bu_{f,h},\bu_{f,h})_{\Om_{f}}
  \\
  & \qquad
  - (\rho_{p}\partial_{tt}{\bbeta}_{p,h},\partial_{t}\bbeta_{p,h})_{\Om_{p}}
  - a_p^e(\bbeta_{p,h},\partial_{t}{\bbeta}_{p,h})
  - (s_0\partial_{t}{p}_{p,h},p_{p,h})_{\Om_{p}} \\
  &\quad \leq
\frac{1}{2\rho_f}\|\f_f\|_{L^2({\Om_{f}})}^2
+ \frac{\rho_f}{2}\|\bu_{f,h}\|_{L^2(\Om_{f})}^2
  + \frac{1}{2\rho_p}\|\f_p\|_{L^2(\Omega_p)}^2
  + \frac{\rho_p}{2}\|\partial_{t}{\bbeta}_{p,h}\|_{L^2(\Omega_p)}^2 \\
  & \qquad  
  + \frac{1}{2\epsilon}\|q_{p}\|_{L^2(\Om_{p})}^2
  + \frac{\epsilon}{2} \|p_{p,h}\|_{L^2(\Om_{p})}^2
  + \frac{\rho_f}{2}\|\partial_{t}{\bu}_{f,h}\|^2_{L^2(\Om_{f})}
  + \frac{\rho_f}{2} \|\bu_{f,h}\|^2_{L^2(\Om_{f})}
  + \frac{\mu_f}{2}\|\D(\bu_{f,h})\|^2_{L^2(\Om_{f})} \\
  & \qquad
  + \frac{\rho_{p}}{2}\|\partial_{tt}{ \bbeta}_{p,h}\|^2_{L^2({\Om_{p}})}
+ \frac{\rho_{p}}{2}\|\partial_{t}{ \bbeta}_{p,h}\|^2_{L^2({\Om_{p}})}
+ \frac12 a_p^e(\bbeta_{p,h},\bbeta_{p,h})
+ \frac12 a_p^e(\d_t\bbeta_{p,h},\d_t\bbeta_{p,h}) \\
  & \qquad
+ \frac{s_0}{2}\|\partial_{t}{p}_{p,h}\|^2_{L^2(\Om_{p})}
  + \frac{s_0}{2}\|p_{p,h}\|_{L^2(\Om_{p})}^2,
\end{align*}
where we used Young's inequality, \eqref{nonlin2}, and \eqref{rhs-bound} to obtain the last inequality, noting that \eqref{nonlin2} holds due to assumption \eqref{assumptionUU}. Using the inf-sup condition \eqref{inf-sup-bound}, and taking $\epsilon = k_{min} \beta_p^2$, we obtain for all $t \in [0,\bar T]$,
\begin{align*}
  &  \frac{3\mu_f}{2} \|\D(\bu_{f,h})\|_{L^2(\Om_{f})}^2
  + \frac12\|K^{-1/2}\bu_{p,h}\|_{L^2(\Omega_p)}^2 \\
  &\quad \leq \rho_f \|\bu_{f,h}\|^2_{L^2(\Om_{f})}
  + \rho_{p}\|\partial_{t}{ \bbeta}_{p,h}\|^2_{L^2({\Om_{p}})}
  + \frac{1}{2}a_p^e(\bbeta_{p,h},\bbeta_{p,h})
  + \frac{1}{2}s_0\|p_{p,h}\|_{L^2(\Om_{p})}^2 \\
  & \quad + \frac{1}{2}\bigg(\rho_f\|\partial_{t}{\bu}_{f,h}\|^2_{L^2(\Om_{f})}
  + \rho_{p}\|\partial_{tt}{\bbeta}_{p,h}\|^2_{L^2(\Om_{p})}
  + a_p^e(\d_t\bbeta_{p,h},\d_t\bbeta_{p,h})
  +s_0\|\partial_{t}{p}_{p,h}\|^2_{L^2(\Om_{p})}\bigg) \\
  & \quad
  + \frac{1}{2\rho_f}\|\f_f\|_{L^2({\Om_{f}})}^2
  + \frac{1}{2\rho_p}\|\f_p\|_{L^2(\Omega_p)}^2
  + \frac{1}{2 k_{min} \beta_p^2}\|q_{p}\|_{L^2(\Om_{p})}^2.
\end{align*}
Using \eqref{semi-bounds-1} and \eqref{semi-bounds-2} in $[0,\bar T]$
results in, for all $t \in [0,\bar T]$,
\begin{align*}
  \mu_f \|\D(\bu_{f,h})\|_{L^2(\Om_{f})}^2 \le
  \exp(\bar T)\left(\frac23\|C_1\|_{L^1(0,\bar T)} + \frac13\|C_2\|_{L^1(0,\bar T)} + \frac13 C_3 \right) + \frac13 \|C_1\|_{L^{\infty}(0,\bar T)}.
\end{align*}
Combined with the small data condition \eqref{smalldata}, this implies that for all $t \in [0,\bar T]$,
\begin{equation}\label{DUFbound}
\|\D(\bu_{f,h})(t)\|_{L^2(\Om_{f})}<\frac{\mu_f}{2\rho_{f}S_f^2K_f^3},
\end{equation}
which contradicts \eqref{assumptionUU}. Therefore \eqref{semi-bounds-3} holds for all $t \in [0,T_1]$.

Finally, the a priori bounds \eqref{semi-bounds-1}--\eqref{semi-bounds-3} in $[0,T_1]$ imply global existence and uniqueness of a solution of ({\bf GP}) in $[0,T]$, with the bounds \eqref{semi-bounds-1}--\eqref{semi-bounds-3} holding in $[0,T]$.

\end{proof}

\subsection{Existence, uniqueness, and stability of the solution of ({\bf LMWF1})}

In the analysis we will utilize the following compactness result.

\begin{lemma}\label{Lemma 0}
\cite{Simon} Let $X$, $Y$ and $B$ be Banach spaces such that $X\subset B \subset Y$ where the embedding of $X$ into $B$ is compact. Let $F$ be a bounded set in $L^p(0,T;X)$ where $1\leq p < \infty$ and the set $\{\partial_{t}{\f}\}_{\f \in F}$ is bounded in $L^1(0,T;Y)$. Then $F$ is relatively compact in $L^p(0, T;B)$.
\end{lemma}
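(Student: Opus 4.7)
\medskip
\noindent\textbf{Proof proposal for Lemma~\ref{Lemma 0}.}
The plan is to deduce relative compactness of $F$ in $L^p(0,T;B)$ from the Fr\'echet--Kolmogorov--M.~Riesz criterion for vector-valued $L^p$ spaces, which requires two ingredients: (i) for every $0 \le t_1 < t_2 \le T$, the set $\{\int_{t_1}^{t_2} f(s)\, ds : f \in F\}$ is relatively compact in $B$, and (ii) the family of time translates is equicontinuous in $L^p(0,T;B)$, i.e., $\sup_{f \in F} \|f(\cdot+h)-f(\cdot)\|_{L^p(0,T-h;B)} \to 0$ as $h \to 0^+$. Both ingredients will be obtained by combining the compact embedding $X \hookrightarrow B$ with the two hypotheses on $F$ and $\partial_t F$.

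First, I would establish an Ehrling-type interpolation inequality: for every $\eta>0$ there exists $C_\eta>0$ such that
\begin{equation*}
  \|u\|_B \le \eta \, \|u\|_X + C_\eta \, \|u\|_Y \qquad \forall\, u \in X.
\end{equation*}
The proof is a standard contradiction/normalization argument: if the inequality failed, one could extract a sequence $\{u_n\}$ with $\|u_n\|_B=1$, $\|u_n\|_X$ bounded, and $\|u_n\|_Y \to 0$; the compact embedding $X \hookrightarrow B$ would then furnish a $B$-convergent subsequence with nonzero limit, contradicting convergence to $0$ forced by the $Y$-bound together with $B \hookrightarrow Y$.

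Next I would verify (i) using only the $L^p(0,T;X)$-bound on $F$: by H\"older's inequality,
\begin{equation*}
  \Big\| \int_{t_1}^{t_2} f(s)\, ds\Big\|_X \le (t_2-t_1)^{1-1/p}\, \|f\|_{L^p(0,T;X)},
\end{equation*}
so the integrals form a bounded set in $X$, hence relatively compact in $B$ by the compact embedding. For (ii), I would apply the Ehrling inequality pointwise to $f(t+h)-f(t)$, raise to the $p$-th power, integrate over $(0,T-h)$, and bound
\begin{equation*}
  \int_0^{T-h}\!\!\|f(t{+}h){-}f(t)\|_B^p\, dt \;\le\; C\,\eta^p \int_0^{T-h}\!\!\|f(t{+}h){-}f(t)\|_X^p\, dt + C\,C_\eta^p \int_0^{T-h}\!\!\|f(t{+}h){-}f(t)\|_Y^p\, dt.
\end{equation*}
The first summand is controlled by $2^p C\eta^p \|f\|_{L^p(0,T;X)}^p$, uniformly in $F$, and becomes arbitrarily small by choosing $\eta$ small. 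For the second summand, since $X \hookrightarrow Y$ gives a uniform $L^p(0,T;Y)$-bound $M$ on $F$, the integrand is bounded by $(2M)^p$; combined with $\|f(t+h)-f(t)\|_Y \le \int_t^{t+h}\|\partial_t f(s)\|_Y\, ds$ this yields
\begin{equation*}
  \int_0^{T-h}\!\!\|f(t{+}h){-}f(t)\|_Y^p\, dt \;\le\; (2M)^{p-1}\, h\, \|\partial_t f\|_{L^1(0,T;Y)},
\end{equation*}
which tends to $0$ as $h\to 0^+$ uniformly in $f \in F$ by the $L^1$-bound on $\partial_t f$. Choosing $\eta$ small first, then $h$ small, establishes (ii).

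The main obstacle I anticipate is reconciling the $L^1$-in-time bound on the derivative with an $L^p$-in-time conclusion when $p>1$. The trick above — interpolating via the trivial $L^\infty_t$-bound of $\|f(\cdot+h)-f(\cdot)\|_Y$ coming from $F \subset L^p(0,T;X) \hookrightarrow L^p(0,T;Y)$ together with the $L^1$-bound from the fundamental theorem of calculus — is the key step, and the reason Ehrling's lemma is indispensable is that it is what converts the compact embedding in space into the smallness of the $X$-controlled part uniformly over the family, independently of $h$.
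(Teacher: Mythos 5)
The paper does not actually prove this lemma; it is quoted verbatim from Simon's compactness paper, so the only meaningful comparison is with the classical argument. Your strategy is exactly that argument: reduce to Simon's translation criterion for relative compactness in $L^p(0,T;B)$ (your conditions (i)–(ii)), prove an Ehrling-type inequality from the compact embedding $X\hookrightarrow B$, use the $L^p(0,T;X)$ bound for the $X$-part of the translates, and use $f(t+h)-f(t)=\int_t^{t+h}\partial_t f(s)\,ds$ for the $Y$-part. Items (i), the Ehrling lemma, and the Fubini estimate $\int_0^{T-h}\int_t^{t+h}\|\partial_t f(s)\|_Y\,ds\,dt\le h\|\partial_t f\|_{L^1(0,T;Y)}$ are all fine.

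There is, however, one step that fails as written: to convert the $L^1$-in-time control of the translates in $Y$ into $L^p$-in-time control you need a bound on $\|f(t+h)-f(t)\|_Y$ that is uniform in $t$, and you claim this comes from the $L^p(0,T;Y)$ bound $M$ ("the integrand is bounded by $(2M)^p$", and again in the last paragraph, "the trivial $L^\infty_t$-bound \dots coming from $F\subset L^p(0,T;X)\hookrightarrow L^p(0,T;Y)$"). An $L^p$-in-time bound does not control $\mathrm{ess\,sup}_t\|f(t)\|_Y$ for $p<\infty$, so this justification is wrong. The conclusion you need is nevertheless true, but it must be extracted from \emph{both} hypotheses: since $f\in L^p(0,T;X)\subset L^1(0,T;Y)$ and $\partial_t f\in L^1(0,T;Y)$, one has $f\in W^{1,1}(0,T;Y)$, and writing $f(t)=f(s)+\int_s^t\partial_t f$ and averaging over $s\in(0,T)$ gives $\|f(t)\|_Y\le T^{-1}\|f\|_{L^1(0,T;Y)}+\|\partial_t f\|_{L^1(0,T;Y)}$ for a.e.\ $t$, i.e.\ $F$ is bounded in $L^\infty(0,T;Y)$ with a constant depending only on the two assumed bounds. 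With $M$ replaced by this $L^\infty(0,T;Y)$ bound, your interpolation $\|g\|_Y^p\le (2M)^{p-1}\|g\|_Y$ and the Fubini step go through, and the proof is complete. So the gap is a misattributed (and, as stated, false) uniform-in-time bound rather than a structural flaw; once repaired, your proof is essentially Simon's.
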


We will also use to following continuous inf-sup conditions.

\begin{lemma}
  \cite{BBF} There exist constants $\beta_p^c >0$ and $\beta_f^c>0$ such that
\begin{align}\label{inf-sup-p-lambda-cont}
  \inf_{(0,0) \ne (w_{p},\mu)\in W_{p}\times\Lambda}
  \sup_{0 \ne \bbv_{p} \in \bbV_{p}}
\frac{ b_p(\bbv_{p},w_{p}) +  \langle \bbv_{p}\cdot \n_p,\mu\rangle_{\Gamma_{fp}}}
{\|\bbv_{p}\|_{H({\rm div}; \Om_{p})} (\|w_{p}\|_{L^2(\Omega_p)}  + \|\mu\|_{H^{1/2}(\Gamma_{fp})}) }
\geq \beta_p^c,
\end{align}
	\begin{align}\label{inf-sup-stokes}
	\inf_{0 \ne w_f\in W_{f}} \sup_{0 \ne \bbv_{f} \in \bbV_{f}}
	\frac{b_f(\bbv_{f},w_f) }
	{\|\bbv_{f}\|_{H^1(\Omega_f)}\|w_{f}\|_{L^2(\Omega_f)}}
	\geq \beta_f^c. 
	\end{align}
\end{lemma}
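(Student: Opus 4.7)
The plan is to establish both continuous inf-sup conditions using standard lifting arguments from the theory of mixed methods, following the patterns in Boffi--Brezzi--Fortin.

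For \eqref{inf-sup-stokes}, I would rely on the classical surjectivity of the divergence operator. Given any nonzero $w_f \in W_f = L^2(\Omega_f)$, it suffices to construct $\bbv_f \in \bbV_f$ satisfying $\nabla\cdot\bbv_f = -w_f$ together with $\|\bbv_f\|_{H^1(\Omega_f)} \le C\,\|w_f\|_{L^2(\Omega_f)}$; then $b_f(\bbv_f,w_f) = \|w_f\|_{L^2(\Omega_f)}^2$, and the ratio in \eqref{inf-sup-stokes} is bounded below by $1/C$. Since homogeneous Dirichlet data are imposed only on $\Gamma_f$ and the interface $\Gamma_{fp}$ carries no essential boundary condition in $\bbV_f$, the relevant surjectivity result is the Bogovskii/Ne\v{c}as theorem: there exists a bounded right inverse of the divergence from $H^1_0(\Omega_f)^d \subset \bbV_f$ onto $L^2_0(\Omega_f)$, and a one-dimensional correction supported on $\Gamma_{fp}$ handles the mean value, giving surjectivity onto the full $L^2(\Omega_f)$. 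This is routine.

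For \eqref{inf-sup-p-lambda-cont}, the idea is to construct, for any $(w_p,\mu)\in W_p\times\Lambda$, a flux $\bbv_p\in\bbV_p$ with $\nabla\cdot\bbv_p=-w_p$ in $\Omega_p$, $\bbv_p\cdot\n_p=\mu$ on $\Gamma_{fp}$, $\bbv_p\cdot\n_p=0$ on $\Gamma_p^N$, and $\|\bbv_p\|_{H({\rm div};\Omega_p)} \le C(\|w_p\|_{L^2(\Omega_p)}+\|\mu\|_{H^{1/2}(\Gamma_{fp})})$. Then the numerator becomes $\|w_p\|_{L^2(\Omega_p)}^2+\langle\mu,\mu\rangle_{\Gamma_{fp}}$, which controls the full target from below (after a Young-inequality rearrangement and using the $H^{1/2}$ bound on $\bbv_p\cdot\n_p$ to re-express $\langle\mu,\mu\rangle$ as $\|\mu\|_{L^2}^2$ can only do part of the job—see below). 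I would build $\bbv_p$ in two steps: first lift $\mu$ to $\bbv_p^{(1)}\in H^1(\Omega_p)^d$ using the trace theorem, exploiting the separation assumption $\dist(\Gamma_p^D,\Gamma_{fp})\ge s>0$, which allows $\mu$ to be extended by zero to an $H^{1/2}(\partial\Omega_p)$ function. Then I would correct the divergence by setting $\bbv_p=\bbv_p^{(1)}+\bbv_p^{(2)}$ where $\bbv_p^{(2)}\in H({\rm div};\Omega_p)$ has vanishing normal trace on $\Gamma_{fp}\cup\Gamma_p^N$ and $-\nabla\cdot\bbv_p^{(2)}=w_p+\nabla\cdot\bbv_p^{(1)}$; the latter is solvable because the effective boundary condition on $\Gamma_p^D$ (of positive measure) restores surjectivity with no compatibility restriction.

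The main technical obstacle is the mismatch of norms in the interface term: the natural pairing produced by an auxiliary Poisson problem would yield $\|\mu\|_{L^2(\Gamma_{fp})}^2$, which is weaker than the required $\|\mu\|_{H^{1/2}(\Gamma_{fp})}^2$. The two-step construction above avoids this: the trace lifting provides the correct $H^{1/2}$-to-$H^1$ estimate for $\bbv_p^{(1)}$, while the divergence correction $\bbv_p^{(2)}$ is controlled by $\|w_p\|_{L^2(\Omega_p)}+\|\nabla\cdot\bbv_p^{(1)}\|_{L^2(\Omega_p)}\le C(\|w_p\|_{L^2(\Omega_p)}+\|\mu\|_{H^{1/2}(\Gamma_{fp})})$. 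Putting these bounds together yields the required inf-sup constant $\beta_p^c>0$, independent of the data. The full argument is standard and is precisely the one used in Boffi--Brezzi--Fortin and in the Stokes--Darcy literature.
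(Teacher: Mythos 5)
The paper itself offers no proof of this lemma (it is quoted from \cite{BBF}), so your argument must stand on its own. Your treatment of the Stokes condition \eqref{inf-sup-stokes} is fine: since $\bbV_f$ carries no essential condition on $\Gamma_{fp}$, the divergence maps $\bbV_f$ onto all of $L^2(\Omega_f)$ (Bogovskii on the mean-zero part plus one fixed field with nonzero flux through $\Gamma_{fp}$), and the bounded right inverse gives the inf-sup constant in the standard way.

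The Darcy--Lagrange multiplier condition \eqref{inf-sup-p-lambda-cont} is where there is a genuine gap. Your test function prescribes $\bbv_p\cdot\n_p=\mu$ on $\Gamma_{fp}$, so the interface contribution to the numerator is $\|\mu\|^2_{L^2(\Gamma_{fp})}$, while the best you can say about the denominator is $\|\bbv_p\|_{H({\rm div};\Om_p)}\le C(\|w_p\|_{L^2(\Omega_p)}+\|\mu\|_{H^{1/2}(\Gamma_{fp})})$. The resulting lower bound for the supremum,
\begin{equation*}
\frac{\|w_p\|^2_{L^2(\Omega_p)}+\|\mu\|^2_{L^2(\Gamma_{fp})}}{C\left(\|w_p\|_{L^2(\Omega_p)}+\|\mu\|_{H^{1/2}(\Gamma_{fp})}\right)},
\end{equation*}
does not dominate $\beta_p^c(\|w_p\|_{L^2(\Omega_p)}+\|\mu\|_{H^{1/2}(\Gamma_{fp})})$: take $w_p=0$ and a highly oscillatory sequence $\mu_k$ with $\|\mu_k\|_{H^{1/2}(\Gamma_{fp})}=1$ and $\|\mu_k\|_{L^2(\Gamma_{fp})}\to 0$, and your bound tends to zero. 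The two-step lifting you describe only sharpens the control of the \emph{denominator}; it cannot repair the numerator, so the ``mismatch of norms'' you flag is not avoided --- it is exactly where the proof fails. The standard fix is to choose the normal trace by duality rather than equal to $\mu$: pick $\psi\in H^{-1/2}(\Gamma_{fp})$ with $\langle\psi,\mu\rangle_{\Gamma_{fp}}=\|\mu\|^2_{H^{1/2}(\Gamma_{fp})}$ and $\|\psi\|_{H^{-1/2}(\Gamma_{fp})}=\|\mu\|_{H^{1/2}(\Gamma_{fp})}$, realize $\bbv_p$ with $\bbv_p\cdot\n_p=\psi$ on $\Gamma_{fp}$ and $\bbv_p\cdot\n_p=0$ on $\Gamma_p^N$ through an auxiliary elliptic (weak Neumann) problem with homogeneous Dirichlet data on $\Gamma_p^D$, and then add your divergence correction for $w_p$; this is the continuous counterpart of the argument behind \eqref{inf-sup-p-lambda} in \cite{ErvJenSun}. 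A secondary flaw in the same step: extending $\mu$ by zero from $\Gamma_{fp}$ to $\partial\Omega_p$ is not bounded in $H^{1/2}$ (it requires membership in $H^{1/2}_{00}(\Gamma_{fp})$), and the separation assumption $\dist(\Gamma_p^D,\Gamma_{fp})\ge s$ says nothing about the junction of $\Gamma_{fp}$ with $\Gamma_p^N$ or the exterior boundary --- one more reason to work with a normal-trace (Neumann) construction rather than a Dirichlet trace lifting of $\mu$ itself.
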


We now present the main result of this section.

\begin{theorem}\label{mainresult}
Let $\f_{f}\in H^1(0,T;L^2(\Om_{f}))$, $\f_{p}\in H^1(0,T;L^2(\Om_{p}))$, $q_p \in H^1(0,T; L^2(\Om_{p}))$ and let the small data condition \eqref{smalldata} hold. Then, the weak formulation ({\bf LMWF1}) has a unique solution in $[0,T]$ satisfying $\bu_{f}(0) = 0$, $\bu_{p}(0) = 0$, $p_{p}(0) = 0$, $\bbeta_{p}(0) = 0$, $\partial_{t}{ \bbeta}_{p}(0)=0$, and $\lambda(0) = 0$. The solution satisfies
\begin{align}\label{main1}
  & \rho_f\|\bu_{f}\|^2_{L^{\infty}(0,T; L^2(\Om_{f}))}
  + 3\mu_f \|\D(\bu_{f})\|^2_{L^{2}(0,T; L^2(\Om_{f}))}
    +2|\bu_{f} - \partial_{t}{\bbeta}_{p}|^2_{L^2(0,T;a_{BJS})}
\nonumber \\
  & \qquad 
  + \rho_p \|\partial_{t}{\bbeta}_{p}\|^2_{L^{\infty}(0,T; L^2(\Om_{p}))}
  + 2\|\mu_p^{1/2}\D(\bbeta_{p})\|^2_{L^{\infty}(0,T; L^2(\Om_{p}))}
  + \|\lambda_p^{1/2}\nabla \cdot \bbeta_{p}\|^2_{L^{\infty}(0,T; L^2(\Om_{p}))} 
   \nonumber \\
  & \qquad
  + s_0\|p_{p}\|^2_{L^{\infty}(0,T;L^2(\Omega_p))} 
 + \|K^{-1/2}\bu_{p}\|_{L^{2}(0,T; L^2(\Om_{p}))} \nonumber \\
  & \qquad
  + \frac{k_{min} \beta_p^2}{2}\|p_{p}\|^2_{L^{2}(0,T;L^2(\Omega_p))}
  + k_{min} \beta_p^2\|\lambda\|^2_{L^2(0,T;H^{1/2}(\Gamma_{fp}))}
  \nonumber\\
  & \quad
  \leq \exp(T) \|C_1\|_{L^1(0,T)}.
\end{align}
Furthermore, it holds that
\begin{align}\label{main2}
  & \rho_f\|\d_t\bu_{f}\|^2_{L^{\infty}(0,T; L^2(\Om_{f}))}
    + 2\mu_f \|\D(\d_t\bu_{f})\|^2_{L^{2}(0,T; L^2(\Om_{f}))}
    +2|\d_t\bu_{f} - \partial_{tt}{\bbeta}_{p}|^2_{L^2(0,T;a_{BJS})}
\nonumber \\
  & \qquad 
  + \rho_p \|\partial_{tt}{\bbeta}_{p}\|^2_{L^{\infty}(0,T; L^2(\Om_{p}))}
  + 2\|\mu_p^{1/2}\D(\d_t\bbeta_{p})\|^2_{L^{\infty}(0,T; L^2(\Om_{p}))}
  + \|\lambda_p^{1/2}\nabla \cdot \d_t\bbeta_{p}\|^2_{L^{\infty}(0,T; L^2(\Om_{p}))} 
   \nonumber \\
  & \qquad
  + s_0\|\d_t p_{p}\|^2_{L^{\infty}(0,T;L^2(\Om_{p}))} 
 + \|K^{-1/2}\d_t\bu_{p}\|_{L^{2}(0,T; L^2(\Om_{p}))} \nonumber \\
  & \qquad
  + \frac{k_{min} \beta_p^2}{2}\|\d_t p_{p}\|^2_{L^{2}(0,T;L^2(\Om_{p}))}
  + k_{min} \beta_p^2\|\d_t\lambda\|^2_{L^2(0,T;H^{1/2}(\Gamma_{fp}))}
  \nonumber\\
  & \quad
  \leq \exp(T) (\|C_2\|_{L^1(0,T)} + C_3),
\end{align}
\begin{equation}\label{div-bound-main}
  \|\grad \cdot \bu_{p}\|_{L^2(\Om_{p}))}
  \leq s_0\|\partial_{t}p_{p}\|_{L^2(\Om_{p})}
  + \alpha \|\div \partial_{t}\bbeta_{p}\|_{L^2(\Om_{p})}
  + ||q_p||_{L^2(\Om_{p})} \quad \mbox{for a.e. } t \in (0,T),  
\end{equation}
\begin{equation}\label{main3}
\|\D(\bu_{f})\|_{L^\infty(0,T; L^2(\Om_{f}))}<\frac{\mu_f}{2\rho_{f}S_f^2K_f^3},
\end{equation}
and, for a.e. $t \in (0,T)$,
\begin{align}\label{main4}
  \|p_f\|_{L^2(\Om_{f})}&\leq C\left(\|\partial_{t}{\bu}_f\|_{L^2(\Om_{f})}
  + \|\bu_{f}\|_{H^1(\Om_{f})}
  + \|\partial_{t}{ \bbeta}_p\|_{H^1(\Om_{p})}
  + \|\lambda\|_{H^{1/2}(\Gamma_{fp})}
  +\|\f_f\|_{L^2(\Om_{f})}\right).
	\end{align}
\end{theorem}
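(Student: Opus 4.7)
The plan is to pass to the limit $h \to 0$ in the Galerkin problem $(\mathbf{GP})$ to obtain a solution of the divergence-free formulation $(\mathbf{LMWF2})$, and then to recover the fluid pressure $p_f$ via the Stokes inf-sup condition \eqref{inf-sup-stokes}. The uniform Galerkin bounds \eqref{semi-bounds-1}--\eqref{semi-bounds-3} of Theorem~\ref{semi-theorem} imply that, up to a subsequence, the variables $\bu_{f,h}$, $\bbeta_{p,h}$, $\bu_{p,h}$, $p_{p,h}$, $\lambda_h$ and their relevant time derivatives converge weakly or weakly-$*$ in the Bochner spaces appearing in Theorem~\ref{semi-theorem} to limits $\bu_f$, $\bbeta_p$, $\bu_p$, $p_p$, $\lambda$. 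Lower semicontinuity of norms under these convergences transfers the estimates \eqref{semi-bounds-1}, \eqref{semi-bounds-2}, \eqref{semi-bounds-3}, and \eqref{div-bound} to the limit, yielding \eqref{main1}, \eqref{main2}, \eqref{main3}, and \eqref{div-bound-main}. The zero initial conditions are inherited because the $L^\infty$-in-time control of the first time derivatives gives $C^0$-in-time regularity of the limits.

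The one term that does not pass to the limit by weak convergence alone is the trilinear convective term $(\rho_f\bu_{f,h}\cdot\nabla\bu_{f,h},\bbv_f)_{\Om_f}$. Here I would invoke the Aubin--Lions--Simon compactness result (Lemma~\ref{Lemma 0}) with $X = H^1(\Om_f)$, $B = L^4(\Om_f)$ (compact embedding for $d=2,3$), and $Y = L^2(\Om_f)$. From \eqref{semi-bounds-1} the family $\{\bu_{f,h}\}$ is bounded in $L^2(0,T;H^1(\Om_f))$, and from \eqref{semi-bounds-2} its time derivative is bounded in $L^\infty(0,T;L^2(\Om_f)) \hookrightarrow L^1(0,T;L^2(\Om_f))$, so $\bu_{f,h} \to \bu_f$ strongly in $L^2(0,T;L^4(\Om_f))$. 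Combined with the weak convergence of $\nabla\bu_{f,h}$ in $L^2(0,T;L^2(\Om_f))$, this is sufficient to pass to the limit in the trilinear form against smooth test functions dense in $\bbV_f^0$; all remaining bilinear forms pass to the limit by their continuity \eqref{C1}--\eqref{cont-b-gamma}.

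To recover $p_f$, I would use \eqref{LMWF1} with an arbitrary test function $\bbv_f \in \bbV_f$. The resulting linear functional
\begin{equation*}
\bbv_f \mapsto (\f_f,\bbv_f)_{\Om_f} - (\rho_f\d_t\bu_f,\bbv_f)_{\Om_f} - a_f(\bu_f,\bbv_f) - (\rho_f\bu_f\cdot\nabla\bu_f,\bbv_f)_{\Om_f} - a_{BJS}(\bu_f,\d_t\bbeta_p;\bbv_f,0) - b_\Gamma(\bbv_f,0,0;\lambda)
\end{equation*}
vanishes on $\bbV_f^0$ in view of $(\mathbf{LMWF2})$, so by a classical corollary of the Stokes inf-sup condition \eqref{inf-sup-stokes} there exists a unique $p_f(t) \in W_f$ representing it through $b_f(\bbv_f,p_f)$. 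Bounding each term of the functional via the continuity estimates \eqref{C1}, \eqref{a-bjs}, \eqref{cont-b-gamma}, \eqref{cont-adv} together with the smallness bound \eqref{main3}, dividing by $\|\bbv_f\|_{H^1(\Om_f)}$, and taking the supremum yields \eqref{main4}.

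Uniqueness is obtained by an energy argument on the difference of two solutions; the only delicate point is the convective difference
\begin{equation*}
(\rho_f\bu_f^{(1)}\cdot\nabla\bu_f^{(1)} - \rho_f\bu_f^{(2)}\cdot\nabla\bu_f^{(2)},\bu_f^{(1)}-\bu_f^{(2)})_{\Om_f},
\end{equation*}
which is controlled using \eqref{sobolev} and \eqref{Korn-f} and absorbed into the viscous term via the smallness bound \eqref{main3}; Gronwall's inequality \eqref{Gronwall} then forces the difference to vanish. The main obstacle throughout is handling the convective nonlinearity: it is what necessitates the compactness step in the existence argument and the smallness assumption \eqref{smalldata} for the absorption arguments in both stability and uniqueness.
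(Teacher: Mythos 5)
Your proposal follows essentially the same route as the paper: weak/weak-$*$ limits of the Galerkin solutions, the Simon compactness lemma for the convective term, the continuous inf-sup conditions to recover and bound $p_f$ (and to conclude uniqueness of $\lambda$), and an energy argument absorbed by the smallness bound \eqref{main3} for uniqueness. The only minor differences are that the paper re-derives \eqref{main1}--\eqref{main3} and \eqref{div-bound-main} by repeating the energy/inf-sup argument directly at the continuous level --- in particular \eqref{div-bound-main} comes from testing \eqref{LMWF2} with $w_p=\grad\cdot\bu_p$, which is cleaner than trying to transfer that a.e.-in-time bound by lower semicontinuity since its right-hand side also only converges weakly --- and it needs no Gronwall step in the uniqueness argument; neither point affects the correctness of your approach.
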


\begin{proof}
We consider sequences of the solution components of ({\bf GP}) with $h \to 0$.
From Theorem \ref{semi-theorem}, bounds \eqref{semi-bounds-1} and \eqref{semi-bounds-2}, we have that
\begin{equation*}
\|\bu_{f,h}\|_{H^1(0,T;H^1(\Om_{f}))}<\infty,
\quad
\|\bbeta_{p,h}\|_{H^1(0,T;H^1(\Om_{p}))}<\infty,
\quad
\|\partial_{tt}{\bbeta}_{p,h}\|_{L^2(0,T;L^2(\Om_{p}))}<\infty,
\end{equation*}
\begin{equation*}
\|\bu_{p,h}\|_{H^1(0,T;H({\rm div};\Omega_p))}<\infty,
\quad
\|p_{p,h}\|_{H^1(0,T;L^2(\Om_{p}))}<\infty,
\quad
\|\lambda_{h}\|_{H^1(0,T;H^{1/2}(\Gamma_{fp}))}<\infty.
\end{equation*}
Let $H^1_0(0,T) = \{\varphi \in H^1(0,T): \varphi(0) = 0\}$. Consider the reflexive Hilbert spaces
$$
\overline\bbV_f = L^2(0,T;\bbV_f^0) \cap H^1_0(0,T;H^1(\Omega_f)), \quad
\overline\X_p = L^2(0,T;\X_p) \cap H^1_0(0,T;H^1(\Omega_p)) \cap H^2(0,T;L^2(\Omega_p)),
$$
$$
\overline\bbV_p = L^2(0,T;\bbV_p) \cap H^1_0(0,T;H({\rm div};\Omega_p)),
\quad \overline W_p = H^1_0(0,T;L^2(\Omega_p)),
\quad \overline\Lambda = H^1_0(0,T;\Lambda)
$$
Due to the boundness of the sequences of the Galerkin solution, there exist subsequences, still denoted by $\{\bu_{f,h}, \bbeta_{p,h}, \bu_{p,h}, p_{p,h}, \lambda_{h}\}$ such that
\begin{equation*}
  \bu_{f,h}\rightharpoonup \bu_{f} \text{ in } \overline\bbV_f,
\quad
\bbeta_{p,h}\rightharpoonup \bbeta_{p} \text{ in } \overline\X_p,
\quad
\bu_{p,h}\rightharpoonup \bu_{p} \text{ in } \overline\bbV_p,
\quad p_{p,h}\rightharpoonup p_{p} \text{ in } \overline W_p,
\quad
\lambda_{h}\rightharpoonup \lambda \text{ in } \overline\Lambda,
\end{equation*}
where $\rightharpoonup$ means weak convergence. Note that the limit functions satisfy the appropriate initial and boundary conditions.  

We need strong convergence for the nonlinear term in \eqref{semi1}. From Sobolev embedding, we know that $H^1(\Om_{f})\hookrightarrow L^4(\Om_{f})\subset L^2(\Om_{f})$. Therefore the subsequence $\{\bu_{f,h}\}$ is bounded in $L^4(0,T;H^1(\Om_{f}))$ and $\{\partial_{t}{\bu}_{f,h}\}$ is bounded in $L^2(0,T;L^2(\Om_{f}))$. 
Lemma~\ref{Lemma 0} implies that $\{\bu_{f,h}\}$ has a subsequence, still denoted the same, that converges strongly to $\bu_{f}$ in $L^4(0,T;L^4(\Om_{f}))$. Next, multiplying \eqref{semi1}--\eqref{semi3} by an arbitrary $\phi(t)\in L^2(0,T)$ and integrating in time, we have, for all
$\bbv_{f,h} \in \bbV_{f,h}^0$, $\bbv_{p,h} \in \bbV_{p,h}$, $w_{p,h} \in W_{p,h}$, 
$\bxi_{p,h} \in \X_{p,h}$, and $\mu_h \in \Lambda_h$,
\begin{align}
  & \int_{0}^{T}(\rho_f \partial_{t}{\bu}_{f,h},\phi(t)\bbv_{f,h})_{\Om_{f}}
  +\int_{0}^{T}a_f(\bu_{f,h},\phi(t)\bbv_{f,h})
  +\int_{0}^{T}(\rho_{f}\bu_{f,h}\cdot \grad \bu_{f,h},\phi(t)\bbv_{f,h})_{\Om_{f}}
  \nonumber  \\
  & \quad
  +\int_{0}^{T}b_f(\phi(t)\bbv_{f,h},p_{f,h})  +\int_{0}^{T}(\rho_{p}\partial_{tt}{\bbeta}_{p,h},\phi(t)\bxi_{p,h})_{\Om_{p}}
  +\int_{0}^{T}a_{p}^{e}(\bbeta_{p,h},\phi(t)\bxi_{p,h}) \nonumber \\
  & \quad
  +\int_{0}^{T}\alpha b_p(\phi(t)\bxi_{p,h},p_{p,h})   +\int_{0}^{T}a_{p}^{d}(\bu_{p,h},\phi(t)\bbv_{p,h})
+\int_{0}^{T}b_p(\phi(t)\bbv_{p,h},p_{p,h})  
  \nonumber \\
  & \quad   +\int_{0}^{T}a_{BJS}(\bu_{f,h},\partial_{t}{\bbeta}_{p,h};
  \phi(t)\bbv_{f,h},\phi(t)\bxi_{p,h})
+\int_{0}^{T}b_{\Gamma}(\phi(t)\bbv_{f,h},\phi(t)\bbv_{p,h},\phi(t)\bxi_{p,h};
  \lambda_h)
  \nonumber  \\
  & \ =\int_{0}^{T}(\f_{f},\phi(t)\bbv_{f,h})_{\Om_{f}}
  +\int_{0}^{T}(\f_{p},\phi(t)\bxi_{p,h})_{\Om_{p}}, \label{int-GP1} \\
  & \int_{0}^{T}(s_0\partial_{t}{p}_{p,h},\phi(t)w_{p,h})_{\Om_{p}}
  -\int_{0}^{T}\alpha b_p(\partial_{t}{\bbeta}_{p,h},\phi(t)w_{p,h})
  -\int_{0}^{T}b_p(\bu_{p,h},\phi(t)w_{p,h})
  \nonumber \\
  & \
  =\int_{0}^{T}(q_p,\phi(t)w_{p,h})_{\Om_{p}}, \label{int-GP2} \\
  & \int_{0}^{T}b_{\Gamma}(\bu_{f,h},\bu_{p,h},\partial_{t}{\bbeta}_{p,h};
  \phi(t)\mu_h)=0. \label{int-GP3}
\end{align}
Fixing $h$ for the test functions, letting $h \to 0$ for the solution functions, and using the above weak and strong convergence results, we obtain, for all
$\bbv_{f,h} \in \bbV_{f,h}^0$, $\bbv_{p,h} \in \bbV_{p,h}$, $w_{p,h} \in W_{p,h}$, 
$\bxi_{p,h} \in \X_{p,h}$, and $\mu_h \in \Lambda_h$,
\begin{align}
  & \int_{0}^{T}(\rho_f \partial_{t}{\bu}_{f},\phi(t)\bbv_{f,h})_{\Om_{f}}
  +\int_{0}^{T}a_f(\bu_{f},\phi(t)\bbv_{f,h})
  +\int_{0}^{T}(\rho_{f}\bu_{f}\cdot \grad \bu_{f},\phi(t)\bbv_{f,h})_{\Om_{f}}
  \nonumber  \\
  & \quad
  +\int_{0}^{T}b_f(\phi(t)\bbv_{f,h},p_{f})  +\int_{0}^{T}(\rho_{p}\partial_{tt}{\bbeta}_{p},\phi(t)\bxi_{p,h})_{\Om_{p}}
  +\int_{0}^{T}a_{p}^{e}(\bbeta_{p},\phi(t)\bxi_{p,h}) \nonumber \\
  & \quad
  +\int_{0}^{T}\alpha b_p(\phi(t)\bxi_{p,h},p_{p})   +\int_{0}^{T}a_{p}^{d}(\bu_{p},\phi(t)\bbv_{p,h})
+\int_{0}^{T}b_p(\phi(t)\bbv_{p,h},p_{p})  
  \nonumber \\
  & \quad   +\int_{0}^{T}a_{BJS}(\bu_{f},\partial_{t}{\bbeta}_{p};
  \phi(t)\bbv_{f,h},\phi(t)\bxi_{p,h})
+\int_{0}^{T}b_{\Gamma}(\phi(t)\bbv_{f,h},\phi(t)\bbv_{p,h},\phi(t)\bxi_{p,h};
  \lambda)
  \nonumber  \\
  & \ =\int_{0}^{T}(\f_{f},\phi(t)\bbv_{f,h})_{\Om_{f}}
  +\int_{0}^{T}(\f_{p},\phi(t)\bxi_{p,h})_{\Om_{p}}, \label{int-WF1} \\
  & \int_{0}^{T}(s_0\partial_{t}{p}_{p},\phi(t)w_{p,h})_{\Om_{p}}
  -\int_{0}^{T}\alpha b_p(\partial_{t}{\bbeta}_{p},\phi(t)w_{p,h})
  -\int_{0}^{T}b_p(\bu_{p},\phi(t)w_{p,h})
  \nonumber \\
  & \
  =\int_{0}^{T}(q_p,\phi(t)w_{p,h})_{\Om_{p}}, \label{int-WF2} \\
  & \int_{0}^{T}b_{\Gamma}(\bu_{f},\bu_{p},\partial_{t}{\bbeta}_{p};
  \phi(t)\mu_h)=0. \label{int-WF3}
\end{align}
Since $\bbV_{f,h}^0$, $\bbV_{p,h}$, $W_{p,h}$, $\X_{p,h}$, and $\Lambda_h$ are dense in $\bbV_{f}^0$, $\bbV_{p}$, $W_{p}$, $\X_{p}$, and $\Lambda$, respectively, and since $\phi(t)\in L^2(0,T)$ is arbitrary, we conclude that \eqref{div-LMWF1}--\eqref{div-LMWF3} hold. The proof of bounds \eqref{main1}--\eqref{main3} is the same as the proof of bounds \eqref{semi-bounds-1}--\eqref{semi-bounds-3} from Theorem~\ref{semi-theorem}, using the continuous inf-sup condition \eqref{inf-sup-p-lambda-cont}.

\medskip
\noindent
{\bf Uniqueness of the solution.} Let $(\bu_{f,1}, \bu_{p,1}, p_{p,1}, \bbeta_{p,1},\lambda_1)$ and $(\bu_{f,2}, \bu_{p,2}, p_{p,2}, \bbeta_{p,2},\lambda_2)$ be two solutions of $(\bold{LMWF2})$
Then, for $\tilde{\bu}_{f}=\bu_{f,1}-\bu_{f,2}$, $\tilde{\bu}_{p}=\bu_{p,1}-\bu_{p,2}$, $\tilde{p}_p=p_{p,1}-p_{p,2}$, $\tilde{\bbeta}_p=\bbeta_{p,1}-\bbeta_{p,2}$ and $\tilde{\lambda}=\lambda_1-\lambda_2$, we have, for all
$\bbv_f \in \bbV_f^0$, $\bbv_p \in \bbV_p$, $w_p \in W_p$, $\bxi_p \in \X_p$, and $\mu \in \Lambda$,
\begin{align}
  & (\rho_f \partial_{t}{\tilde\bu}_f,\bbv_f)_{\Om_{f}}
  + a_f(\tilde\bu_{f},\bbv_{f})
    +(\rho_{f}\bu_{f,1}\cdot \grad \bu_{f,1},\bbv_f)_{\Om_{f}}
  -(\rho_{f}\bu_{f,2}\cdot \grad \bu_{f,2},\bbv_f)_{\Om_{f}}
  \nonumber \\[1ex]
  & \quad
  + (\rho_{p}\partial_{tt}{\tilde\bbeta}_{p},\bxi_{p})_{\Om_{p}}
  + a_p^e(\tilde\bbeta_{p},\bxi_{p})
  + \alpha b_p(\bxi_{p},\tilde p_p) + a_p^d(\tilde\bu_p,\bbv_p)_{\Om_{p}}
  + b_p(\bbv_{p},\tilde p_{p})
\nonumber \\[1ex]
& \quad
+ a_{BJS}(\tilde\bu_{f},\partial_{t}{\tilde\bbeta}_{p};\bbv_{f},\bxi_{p}) 
+ b_{\Gamma}(\bbv_{f},\bbv_{p},\bxi_{p};\tilde\lambda)
  = 0, \label{uni-LMWF1} \\[1ex]     
  & (s_0\partial_{t}{\tilde p_p},w_p)_{\Om_{p}}
  - \alpha b_p(\partial_{t}{\tilde\bbeta}_p,w_p)
  - b_p(\tilde\bu_{p},w_p) = 0, \label{uni-LMWF2}\\[1ex]     
& b_{\Gamma}(\tilde\bu_{f},\tilde\bu_{p},\partial_{t}{ \tilde\bbeta}_{p};\mu) = 0.  \label{uni-LMWF3}
\end{align}
Take $\bbv_{f}=\tilde{\bu}_f$,
$\bbv_{p}=\tilde{\bu}_p$, $w_p=\tilde{p}_p$,
$\bxi_p=\partial_{t}{ \tilde\bbeta}_p$, and 
$\mu = \tilde{\lambda}$ in \eqref{uni-LMWF1}--\eqref{uni-LMWF3} and combine the equations:
\begin{align}\label{eeq}
  &(\rho_f \partial_{t}{\tilde{\bu}}_f,\tilde\bu_f)_{\Om_{f}}
  +a_f(\tilde{\bu}_{f},\tilde\bu_f)
   + (\rho_{p}\d_{tt}{\tilde{\bbeta}}_{p},\partial_{t}{\tilde{\bbeta}}_{p})_{\Om_{p}}
  +a_{p}^{e}(\tilde{\bbeta}_{p},\partial_{t}{\tilde{\bbeta}}_{p})
  + |\tilde\bu_{f}-\partial_{t}{ \tilde\bbeta}_{p}|^{2}_{a_{BJS}}
  \nonumber \\
  &\,\
  +a_{p}^{d}(\tilde{\bu}_{p},\tilde \bu_{p})
  +(s_0\partial_{t}{\tilde{p}}_p,\tilde p_p)_{\Om_{p}}
 = -(\rho_{f}\bu_{f,1}\cdot \grad \bu_{f,1},\tilde \bu_f)_{\Om_{f}}
  +(\rho_{f}\bu_{f,2}\cdot \grad \bu_{f,2},\tilde \bu_f)_{\Om_{f}}.
\end{align}
We control the right hand side of \eqref{eeq} as follows:
\begin{align*}
  & -(\rho_{f}\bu_{f,1}\cdot \grad \bu_{f,1},\tilde \bu_f)_{\Om_{f}}
  + (\rho_{f}\bu_{f,2}\cdot \grad \bu_{f,2},\tilde \bu_f)_{\Om_{f}}
  = -(\rho_{f}\tilde\bu_{f}\cdot \grad \bu_{f,1},\tilde \bu_f)_{\Om_{f}}
  -(\rho_{f}\bu_{f,2}\cdot \grad \tilde\bu_{f},\tilde \bu_f)_{\Om_{f}} \nonumber \\
  & \qquad \leq \rho_f S_f^2K_f^3 \|\D(\tilde\bu_{f})\|_{L^2(\Om_{f})}^2		\left(\|\D(\bu_{f,1})\|_{L^2(\Om_{f})} + \|\D(\bu_{f,2})\|_{L^2(\Om_{f})}\right)
\le \frac12 a_f(\tilde{\bu}_{f},\tilde\bu_f),
\end{align*}
using that $\bu_{f,1}$ and $\bu_{f,2}$ satisfy \eqref{main3}. Combining this with \eqref{eeq} and integrating in time from 0 to $t \in (0,T]$, we obtain
\begin{equation*}
  \rho_f \|\tilde\bu_{f}\|^2_{L^2({\Om_{f}})}
  +\rho_{p}\|\partial_{t}{\tilde \bbeta}_{p}\|^2_{L^2({\Om_{p}})}
  +a_p^e(\tilde\bbeta_{p},\tilde{\bbeta}_{p})
  +s_0\|\tilde p_{p}\|_{L^2(\Om_{p})}^2
  + \int_0^t \big(a_f(\tilde{\bu}_{f},\tilde\bu_f) + 2a_{p}^{d}(\tilde{\bu}_{p},\tilde \bu_{p})\big)
\leq 0,
\end{equation*}
which implies $\tilde{\bu}_f=0$, $\tilde\bbeta_{p}=0$, $\tilde{\bu}_p=0$, and $\tilde p_{p}=0$. The inf-sup condition \eqref{inf-sup-p-lambda-cont}, along with \eqref{uni-LMWF1} for test function $\bbv_{p}$,
implies that $\tilde\lambda=0$. This completes the proof of uniqueness.

\medskip
\noindent
{\bf Recovery of $p_f$.} To recover the Navier-Stokes pressure $p_f$, which has been eliminated in ($\bold{LMWF2}$), we utilize the inf-sup condition \eqref{inf-sup-stokes}. For each $t \in (0,T]$, define the linear functional $\mathbb{F}: \bbV_f \to \R$ as follows:
\begin{align*}
\mathbb{F}(\bbv_{f}) & = (\rho_f \partial_{t}{\bu}_{f},\bbv_{f})_{\Om_{f}}
  +a_f(\bu_{f},\bbv_{f})
  +(\rho_{f}\bu_{f}\cdot \grad \bu_{f},\bbv_{f})_{\Om_{f}} \\
  & \quad + b_{\Gamma}(\bbv_{f},0,0;\lambda)
  +a_{BJS}(\bu_{f},\partial_{t}{\bbeta}_{p};\bbv_{f},0)
  -(\f_{f},\bbv_{f})_{\Om_{f}}, \quad \forall \, \bbv_{f}\in \bbV_{f}.
\end{align*}
The functional $\mathbb{F}$ is continuous and $\mathbb{F}(\bbv)=0$ on $\bbV_{f}^0$ for a.e. $t\in(0,T)$,
from \eqref{div-LMWF1}. Then, by \eqref{inf-sup-stokes}, we conclude that, for a.e. $t\in(0,T)$, there exists a unique $p_f \in W_f$ such that for all $\bbv_{f}\in \bbV_{f}$,
\begin{equation}\label{pf}
	b_f(\bbv_{f},p_f)=\mathbb{F}(\bbv_{f}).
\end{equation}
Therefore
$(\bu_{f}, p_f, \bu_p, p_p, \bbeta_{p}, \lambda)$ is a unique solution of ($\bold{LMWF1}$). Furthermore, \eqref{inf-sup-stokes} and \eqref{pf} imply
\begin{align}
  \|p_f\|_{L^2(\Om_{f})}& \leq \frac{1}{\beta_f^c}\bigg(\rho_{f}\|\partial_{t}{\bu}_f\|_{L^2(\Om_{f})}
  + 2\mu_f\|\D(\bu_{f})\|_{L^2(\Om_{f})}
  +\rho_{f}S_f^2K_f^2\|\D(\bu_{f})\|^2_{L^2(\Om_{f})}\nonumber\\
  & + C^{\Gamma}\|\lambda\|_{\Lambda}
  + C^{BJS}(\|\bu_f\|_{H^1(\Omega_f)} + \|\d_t\bbeta_p\|_{H^1(\Omega_p)})
  +\|\f_f\|_{L^2(\Om_{f})}\bigg). \label{pf-bound}
\end{align}
where we used the continuity bounds \eqref{a-bjs} and \eqref{cont-b-gamma}.
Using \eqref{main3}, we have 
\begin{align*}
  \rho_{f}S_f^2K_f^2\|\D(\bu_{f})\|^2_{L^2(\Om_{f})}
  \leq \frac{\mu_f}{2K_f}\|\D(\bu_{f})\|_{L^2(\Om_{f})},
\end{align*}
which, together with \eqref{pf-bound}, implies \eqref{main4}. This completes the proof of the theorem.
\end{proof}

\section{Fully discrete numerical scheme}\label{section4}

For the spatial discretization, we consider the subdomain finite element partitions $\mathcal{T}^f_h$ and $\mathcal{T}^p_h$ introduced in Section~\ref{sec:galerkin}, as well as the finite element spaces for the Biot system, $\bbV_{p,h} \times W_{p,h} \subset \bbV_{p} \times W_p$ for Darcy and $\X_{p,h}\subset \X_{p}$ for elasticity. For the Navier-Stokes discretization, we consider any conforming inf-sup stable Stokes pair $\bbV_{f,h}\times W_{f,h} \subset \bbV_f \times W_f$, such as the MINI elements or the Taylor-Hood spaces \cite{BBF}. For the Lagrange multiplier we consider the non-conforming finite element space $\Lambda_h$ defined on the trace of $\mathcal{T}^p_h$ on $\Gamma_{fp}$ such that
\begin{equation}\label{nonconforming}
\Lambda_{h} = \bbV_{p,h}\cdot\n_p|_{\Gamma_{fp}},
\end{equation}
equipped with the $L^2(\Gamma_{fp})$-norm.

\begin{remark}
We note that, despite the same notation, the spaces $\bbV_{f,h}$ and $\Lambda_h$ are different from their counterparts introduced in Section~\ref{sec:galerkin}, where they were used as part of the Galerkin approximation in the well-posedness argument.
\end{remark}

We recall that the continuity of the bilinear form $b_{\Gamma}$, cf. \eqref{cont-b-gamma}, was based on the $H^{1/2}(\Gamma_{fp})$-conformity of the Lagrange multiplier space. We remark that the interface bilinear form $b_{\Gamma}$ is still continuous with the new choice of $\Lambda_h$ for any given finite element mesh. In particular, using the trace and trace-inverse inequalities, there exists $C^{\Gamma} > 0$ such that for all $\bbv_{f,h} \in \bbV_{f,h}$, $\bbv_{p,h}  \in  \bbV_{p,h}$, $\bxi_{p,h}  \in \X_{p,h}$, and $\mu_h \in \Lambda_h$,
\begin{equation}\label{cont-b-gamma-h}
b_{\Gamma}(\bbv_{f,h},\bbv_{p,h},\bxi_{p,h};\mu_h) \le C^{\Gamma}
(\|\bbv_{f,h}\|_{H^1(\Omega_f)} +  \bar h^{-1/2} \|\bbv_{p}\|_{L^2(\Om_{p})} + \|\bxi_{p}\|_{H^1(\Omega_p)}) \|\mu\|_{L^2(\Gamma_{fp})},
\end{equation}
where $\bar h$ is the smallest element diameter in $\mathcal{T}^p_h$. We note that the factor $\bar h^{-1/2}$ plays no role in the stability and error bounds.

We have the following discrete inf-sup conditions. There exist positive constants $\beta_p$ and $\beta_f$ such that
\begin{gather}
  \inf_{(0,0) \ne (w_{p,h},\mu_{h})\in W_{p,h}\times\Lambda_{h}}
  \sup_{0 \ne \bbv_{p,h} \in \bbV_{p,h}}
\frac{ b_p(\bbv_{p,h},w_{p,h}) +  \langle \bbv_{p,h}\cdot \n_p,\mu_h\rangle_{\Gamma_{fp}}}
{\|\bbv_{p,h}\|_{H({\rm div}; \Om_{p})} (\|w_{p,h}\|_{L^2(\Omega_p)}  + \|\mu_h\|_{L^2(\Gamma_{fp})}) }
\geq \beta_p, \label{inf-sup-p-lambda-dis}\\
\inf_{w_{f,h}\in W_{f,h}} \sup_{\bbv_{f,h} \in \bbV_{f,h}}
\frac{b_f(\bbv_{f,h},w_{f,h}) }{\|\bbv_{f,h}\|_{H^1(\Omega_f)}\|w_{f,h}\|_{L^2(\Omega_f)}}
\geq \beta_f. \label{inf-sup-pf-dis}
\end{gather}
Bounds \eqref{inf-sup-p-lambda-dis} and \eqref{inf-sup-pf-dis} are shown in \cite{ambartsumyan2019nonlinear} and 
\cite{LaySchYot}, respectively.

For the time discretization we use the backward Euler's method. Let $N$ be the number of time steps and $\Delta t=T/N$, $t_n = n \Delta t$, $0\leq n\leq N$. Let $d_tu^{n+1}=(u^{n+1}-u^{n})/\Delta t$ be the first order (backward) discrete time derivative, where $u^n=u(t_n)$. Let $d_{tt}u^{n+1} = d_t d_t u^{n+1} = (u^{n+1} -2 u^n + u^{n-1})/\Delta t^2$, where, for $n = 0$, $u^{-1}$ will be specified. The numerical scheme is as follows: 

\medskip
\noindent
({\bf FDNS}): Given $\bu_{f,h}^0=\bu_{f,h}(0)=0$, $\bbeta_{p,h}^0=\bbeta_{p,h}(0)=0$, $\bbeta_{p,h}^{-1}=0$, $\bu_{p,h}^0=\bu_{p,h}(0)=0$ and
$p_{p,h}^0=p_{p,h}(0)=0$, find $\{(\bu_{f,h}^n, p_{f,h}^n, \bu_{p,h}^n, p_{p,h}^n, \bbeta_{p,h}^n, \lambda_h^n)\}_{n=1}^N$ $\in \bbV_{f,h} \times W_{f,h}\times \bbV_{p,h} \times W_{p,h}\times \X_{p,h}\times \Lambda_{h}$ 
such that for $0 \le n \le N-1$ and for all
$\bbv_{f,h}\in \bbV_{f,h}$ , $w_{f,h}\in W_{f,h}$, $\bbv_{p,h}\in\bbV_{p,h}$, $w_{p,h}\in W_{p,h}$, $\bxi_{p,h}\in \X_{p,h}$, and $\mu_h\in \Lambda_{h}$,
\begin{align}
  &(\rho_f d_t \bu_{f,h}^{n+1},\bbv_{f,h})_{\Om_{f}}
  +a_f(\bu_{f,h}^{n+1},\bbv_{f,h})
  +(\rho_{f}\bu_{f,h}^{n}\cdot \grad \bu_{f,h}^{n+1},\bbv_{f,h})_{\Om_{f}}
  +b_f(\bbv_{f,h},p_{f,h}^{n+1})
  \nonumber\\
  & \quad
  + (\rho_{p}d_{tt}\bbeta_{p,h}^{n+1},\bxi_{p,h})_{\Om_{p}}
  +a_{p}^{e}(\bbeta_{p,h}^{n+1},\bxi_{p,h})
  +\alpha b_p(\bxi_{p,h},p_{p,h}^{n+1})
  \nonumber\\
  & \quad
  +a_{p}^{d}(\bu_{p,h}^{n+1},\bbv_{p,h})
  +b_p(\bbv_{p,h},p_{p,h}^{n+1})
  +b_{\Gamma}(\bbv_{f,h},\bbv_{p,h},\bxi_{p,h};\lambda_h^{n+1})
  \nonumber\\
  & \quad
  +a_{BJS}(\bu_{f,h}^{n+1},d_t\bbeta_{p,h}^{n+1};\bbv_{f,h},\bxi_{p,h})
  = (\f_{f}^{n+1},\bbv_{f,h})_{\Om_{f}}+(\f_{p}^{n+1},\bxi_{p,h})_{\Om_{p}},
  \label{fully1} \\
  & (s_0 d_t p_{p,h}^{n+1},w_{p,h})_{\Om_{p}}
  -\alpha b_p(d_t\bbeta_{p,h}^{n+1},w_{p,h})
  -b_p(\bu_{p,h}^{n+1},w_{p,h})
    \nonumber\\
  & \quad
- b_f(\bu_{f,h}^{n+1},w_{f,h})
= (q_p^{n+1},w_{p,h})_{\Om_{p}}, \label{fully2} \\
&b_{\Gamma}(\bu_{f,h}^{n+1},\bu_{p,h}^{n+1},d_t\bbeta_{p,h}^{n+1};\mu_h)=0. \label{fully3}
\end{align}
Note that setting $\bbeta_{p,h}^{-1}=0$ provides an approximation to $\d_t \bbeta(0) = 0$. Also, in \eqref{fully1}, we utilize a linearization $(\rho_{f}\bu_{f,h}^{n}\cdot \grad \bu_{f,h}^{n+1},\bbv_{f,h})_{\Om_{f}}$, instead of directly using the nonlinear term.

We assume that $\Delta t \le 1$, which will allows to employ the following the discrete Gronwall inequality \cite{QV-book}.

\begin{lemma}[Discrete Gronwall lemma] \label{disc-Gronwall}
	Let $\tau > 0$, $B \ge 0$, and let $a_n,b_n,c_n,d_n$, $n \geq 0$, be non-negative sequences 
	such that $a_0 \leq B$ and
	\begin{align*}
	a_n + \tau \sum_{l=1}^n b_l \leq \tau \sum_{l=1}^{n-1} d_la_l  + 
	\tau \sum_{l=1}^n c_l +B, \quad n\geq 1.
	\end{align*}
	Then,
	\begin{align*}
		a_n + \tau \sum_{l=1}^n b_l \leq \exp(\tau \sum_{l=1}^{n-1}d_l)
		\left( \tau \sum_{l=1}^n c_l + B \right) , \quad n \geq 1.
	\end{align*}
\end{lemma}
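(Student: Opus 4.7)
I would prove the lemma by a discrete comparison argument combined with the elementary inequality $1+x \le e^x$. First, since $b_l \ge 0$, I would introduce the abbreviations $A_n := a_n + \tau\sum_{l=1}^n b_l$ and $C_n := \tau\sum_{l=1}^n c_l + B$, and note that $a_l \le A_l$ and that the sequence $(C_n)$ is non-decreasing. Substituting $a_l \le A_l$ into the hypothesis then converts it to the self-referential form $A_n \le \tau\sum_{l=1}^{n-1} d_l\, A_l + C_n$, which is the only place the non-negativity of the $b_l$ is used.

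Next, I would let $S_n$ denote the right-hand side of this last inequality, so that $A_n \le S_n$. Using $A_{n-1}\le S_{n-1}$ together with $C_n - C_{n-1} = \tau c_n \ge 0$, one obtains the one-step recursion $S_n \le (1+\tau d_{n-1})\, S_{n-1} + (C_n - C_{n-1})$ valid for $n \ge 2$, with initial value $S_1 = C_1$ coming from the empty sum in the index range $1\le l\le 0$. Unrolling this recursion produces a sum over $l$ of telescoping increments $(C_l - C_{l-1})$ (with the convention $C_0 := B$) weighted by partial products $\prod_{k=l}^{n-1}(1+\tau d_k)$. Because every factor $1+\tau d_k \ge 1$, each such partial product is bounded above by the full product $\prod_{l=1}^{n-1}(1+\tau d_l)$; pulling this full product outside the sum and telescoping the $C_l$-differences yields $S_n \le \prod_{l=1}^{n-1}(1+\tau d_l)\, C_n$. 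Finally, the elementary estimate $1+\tau d_l \le \exp(\tau d_l)$ gives $\prod_{l=1}^{n-1}(1+\tau d_l) \le \exp\!\left(\tau \sum_{l=1}^{n-1} d_l\right)$, which combined with $A_n \le S_n$ is precisely the claimed bound.

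\textbf{Main obstacle.} There is no deep obstacle in this standard lemma (indeed it is cited to \cite{QV-book}); the only care needed is the index book-keeping. In particular, the sum involving $d_l$ in the hypothesis runs only up to $n-1$, so the step from $S_{n-1}$ to $S_n$ picks up the factor $(1+\tau d_{n-1})$ rather than $(1+\tau d_n)$, and one must exploit the monotonicity of $(C_n)$ (together with the elementary bound $1+x\le e^x$) to collapse the telescoping sum into the clean exponential form appearing in the conclusion. Note that the hypothesis $a_0 \le B$ is used to handle the $n=1$ base case, where the $d$-sum is empty and the claim reduces to $A_1 \le C_1$.
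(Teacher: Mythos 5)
Your proof is correct: the reduction to $A_n \le \tau\sum_{l=1}^{n-1} d_l A_l + C_n$, the one-step recursion with factor $(1+\tau d_{n-1})$, and the bound $\prod(1+\tau d_l)\le \exp(\tau\sum d_l)$ together give exactly the stated estimate, and the index bookkeeping (the $d$-sum stopping at $n-1$) is handled correctly. Note that the paper itself does not prove this lemma; it simply cites it from \cite{QV-book}, so there is no in-paper argument to compare against — yours is the standard product-to-exponential proof of this classical result. One small inaccuracy in your closing remark: the hypothesis $a_0\le B$ is not actually what handles the case $n=1$; that case follows directly from the assumed inequality with $n=1$, which reads $A_1\le C_1$ since the $d$-sum is empty (indeed, $a_0$ never enters the hypothesis or the conclusion, so $a_0\le B$ plays no role in your argument — the convention $C_0:=B$ in your telescoping is purely notational). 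This does not affect the validity of the proof.
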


We will utilize the following identity.
\begin{equation}\label{discrete time analog}
\int_{\Om_\star}\phi^{n}d_t\phi^{n}=d_t\|\phi^{n}\|^2_{L^2(\Om_\star)} + \frac{1}{2}\Delta t \|d_t\phi^{n}\|^2_{L^2(\Om_\star)}.
\end{equation}

\subsection{Well-posedness of the solution to the fully discrete numerical scheme}\label{recover_discrete}

Recall that in section \ref{well-posedness semi} we defined the quantities $C_1(t)$, $C_2(t)$, and $C_3$, which depend on the data $\f_{f}$, $\f_{p}$, and $q_p$. In the fully discrete analysis we will utilize the discrete quantities
\begin{align*}
  & C_1^j = \frac{2}{\rho_f}\|\f_f^j\|_{L^2({\Om_{f}})}^2
+ \frac{2}{\rho_p}\|\f_{p}^j\|_{L^2(\Om_{p})}^2
+ \frac{2}{k_{min}\beta_p^2}\|q_{p}^j\|_{L^2(\Om_{p})}^2, \\  
& C_2^j = \frac{2}{\rho_f}\|d_{t}{\f}_{f}^j\|^2_{L^2(\Om_{f})} + \frac{2}{\rho_p}\|d_{t}{\f}_{p}^j\|^2_{L^2(\Om_{p})} + \frac{2}{k_{min}\beta_p^2}
\|d_{t}{q}_p^j\|^2_{L^2(\Om_{p})}, \\
& C_4:=
\frac{1}{\rho_{f}}\|\f_{f}^1\|^2_{L^2(\Om_{f})}
+ \frac{1}{\rho_{p}}\|\f_{p}^1\|^2_{L^2(\Om_{p})}
+ \frac{1}{s_0}\|q_p^1\|^2_{L^2(\Om_{p})}.
\end{align*}

We introduce the discrete-in-time norms: 
\begin{align*}
  \|\phi\|^2_{l^2(0,T;X)} :=\Delta t \sum_{n=1}^N \|\phi^n\|^2_X,
  \qquad \|\phi\|_{l^{\infty}(0,T;X)} := \max_{1\leq n\leq N}\|\phi^n\|_X.
\end{align*}

Next, we establish the following main result for the method \eqref{fully1}--\eqref{fully3}. 

\begin{theorem}
Let $\f_{f}$, $\f_{p}$, and $q_p$ satisfy the discrete small data condition, for $n = 0,\ldots,N-1$,
\begin{equation}\label{dis-smalldata}
  \exp(t_{n+1})\left(\Delta t \sum_{j = 0}^n \left(\frac43 C_1^{j+1} + \frac23 C_2^{j+1}\right)
+ \frac23 C_4 \right) + \frac16 C_1^{n+1} < \frac{\mu_f^3}{4\rho_f^2S_f^4K_f^6}.
\end{equation}
Then the fully discrete method $(\bold{FDNS})$ has a unique solution 
$\{(\bu_{f,h}^n, p_{f,h}^{n}, \bu_{p,h}^n$, $p_{p,h}^n, \bbeta_{p,h}^n, \lambda_h^n)\}_{n\geq 1}$, which satisfies:
\begin{align}\label{stability1}
	& \frac{\rho_f}{2}\|\bu_{f,h}\|^2_{l^{\infty}(0,T; L^2(\Om_{f}))}
	+ 3\mu_f \|\D(\bu_{f,h})\|^2_{l^{2}(0,T; L^2(\Om_{f}))}
	+2|\bu_{f,h} - d_t{\bbeta}_{p,h}|^2_{l^2(0,T;a_{BJS})}
	\nonumber \\
	& \qquad 
	+ \frac{\rho_p}{2} \|d_t{\bbeta}_{p,h}\|^2_{l^{\infty}(0,T; L^2(\Om_{p}))}
	+ 2\|\mu_p^{1/2}\D(\bbeta_{p,h})\|^2_{l^{\infty}(0,T; L^2(\Om_{p}))}
	+ \|\lambda_p^{1/2}\nabla \cdot \bbeta_{p,h}\|^2_{l^{\infty}(0,T; L^2(\Om_{p}))} 
	\nonumber \\
	& \qquad
	+ s_0\|p_{p,h}\|^2_{l^{\infty}(0,T;L^2(\Omega_p))} 
	+ \|K^{-1/2}\bu_{p,h}\|^2_{l^{2}(0,T; L^2(\Om_{p}))} \nonumber \\
        & \qquad
        + \frac{k_{min} \beta_p^2}{2}\|p_{p,h}\|^2_{l^{2}(0,T;L^2(\Omega_p))}
	+ k_{min} \beta_p^2\|\lambda_{h}\|^2_{l^2(0,T; L^2(\Gamma_{fp}))}\nonumber \\
	& \quad
	\leq \exp(T) \Delta t \sum_{j = 0}^{N-1} C_1^{j+1},
\end{align}
\begin{align}\label{stability2}
  & \frac{\rho_f}{2}\|d_t\bu_{f,h}\|^2_{l^{\infty}(0,T; L^2(\Om_{f}))}
  + 2\mu_f \|\D(d_t\bu_{f,h})\|^2_{l^{2}(0,T; L^2(\Om_{f}))}
	+2|d_t\bu_{f,h} - d_{tt}{\bbeta}_{p,h}|^2_{l^2(0,T;a_{BJS})}
	\nonumber \\
	& \qquad 
	+ \frac{\rho_p}{2} \|d_{tt}{\bbeta}_{p,h}\|^2_{l^{\infty}(0,T; L^2(\Om_{p}))}
	+ 2\|\mu_p^{1/2}\D(d_{t}\bbeta_{p,h})\|^2_{l^{\infty}(0,T; L^2(\Om_{p}))}
	+ \|\lambda_p^{1/2}\nabla \cdot d_{t}\bbeta_{p,h}\|^2_{l^{\infty}(0,T; L^2(\Om_{p}))} 
	\nonumber \\
	& \qquad
	+ s_0\|d_{t}p_{p,h}\|^2_{l^{\infty}(0,T;L^2(\Omega_p))} 
	+ \|K^{-1/2}d_{t}\bu_{p,h}\|^2_{l^{2}(0,T; L^2(\Om_{p}))}
	\nonumber \\
	& \qquad
        + \frac{k_{min} \beta_p^2}{2}\|d_{t}p_{p,h}\|^2_{l^{2}(0,T;L^2(\Omega_p))}
	+ k_{min} \beta_p^2\|d_{t}\lambda_{h}\|^2_{l^2(0,T; L^2(\Gamma_{fp}))}\nonumber \\
	& \quad
	\leq \exp(T)\Big(\Delta t \sum_{j = 0}^{N-1} C_2^{j+1} + C_4\Big),
\end{align}
\begin{equation}\label{stability-Duf}
\|\D(\bu_{f,h})\|_{l^{\infty}(0,T; L^2(\Om_{f}))}\leq \frac{\mu_f}{2\rho_{f}S_f^2K_f^3},
\end{equation}
and
\begin{align}\label{stability-pf}
  \|p_{f,h}\|_{l^2(0,T;L^2(\Om_{f}))}&\leq C\big(\|d_t \bu_{f,h}\|_{l^2(0,T;L^2(\Om_{f}))}
  + \|\bu_{f,h}\|_{l^2(0,T;H^1(\Om_{f}))}
  + \|d_{t}\bbeta_{p,h}\|^2_{l^{2}(0,T; H^1(\Om_{p}))}
  \nonumber\\
  &\quad\qquad
  + \|\lambda_{h}\|_{l^2(0,T;L^2(\Gamma_{fp}))}
  + \|\f_{f}\|^2_{l^2(0,T;L^2(\Om_{f}))}\big).
\end{align}
\end{theorem}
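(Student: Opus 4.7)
The plan is to mirror the structure of the proof of Theorem~\ref{semi-theorem}, transferring each ingredient from the continuous to the fully discrete setting while executing an induction on the time step $n$. At each step I would establish three facts simultaneously: solvability of the linear square system \eqref{fully1}--\eqref{fully3}, the energy bounds \eqref{stability1}--\eqref{stability2} up through level $n+1$, and the smallness bound \eqref{stability-Duf} at level $n+1$. The base case $n=0$ follows from the zero initial data. For solvability, since the convective term is time-lagged, the system is linear in the unknowns at level $n+1$, so it suffices to prove injectivity; testing the homogeneous version of \eqref{fully1}--\eqref{fully3} with the unknowns themselves (and inserting the pressures and multiplier via the discrete inf-sup conditions \eqref{inf-sup-p-lambda-dis} and \eqref{inf-sup-pf-dis}) forces the trivial kernel.

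For \eqref{stability1}, I would test \eqref{fully1}--\eqref{fully3} with $(\bu_{f,h}^{n+1}, p_{f,h}^{n+1}, \bu_{p,h}^{n+1}, p_{p,h}^{n+1}, d_t\bbeta_{p,h}^{n+1}, \lambda_h^{n+1})$ and sum, producing a discrete analog of \eqref{energy_4}; the identity \eqref{discrete time analog} converts $(\rho_f d_t \bu_{f,h}^{n+1}, \bu_{f,h}^{n+1})_{\Om_f}$ and the analogous $s_0$-term into clean discrete time derivatives plus nonnegative dissipation remainders. The linearized convective term $(\rho_f \bu_{f,h}^n \cdot \grad \bu_{f,h}^{n+1}, \bu_{f,h}^{n+1})_{\Om_f}$ is absorbed into the viscous diffusion via \eqref{sobolev} and \eqref{Korn-f} using the inductive smallness hypothesis at step $n$, reproducing \eqref{nonlin2}. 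The Darcy pressure and Lagrange multiplier are controlled by \eqref{inf-sup-p-lambda-dis} as in \eqref{inf-sup-bound}. Summing from $1$ to $m\le n+1$, choosing $\epsilon = k_{\min}\beta_p^2/2$, and invoking Lemma~\ref{disc-Gronwall} yields \eqref{stability1}. For \eqref{stability2}, I would apply $d_t$ to \eqref{fully1}--\eqref{fully3} and test with $(d_t\bu_{f,h}^{n+1}, d_t p_{f,h}^{n+1}, d_t\bu_{p,h}^{n+1}, d_t p_{p,h}^{n+1}, d_{tt}\bbeta_{p,h}^{n+1}, d_t\lambda_h^{n+1})$. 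The $d_t$ of the linearized convective term produces two pieces, $(\rho_f\, d_t \bu_{f,h}^n \cdot \grad \bu_{f,h}^{n+1}, d_t\bu_{f,h}^{n+1})_{\Om_f}$ and $(\rho_f \bu_{f,h}^{n+1} \cdot \grad d_t \bu_{f,h}^{n+1}, d_t \bu_{f,h}^{n+1})_{\Om_f}$, both controlled in analogy with \eqref{control} by the inductive smallness bound. The starting quantities $\|d_t\bu_{f,h}^1\|_{L^2(\Om_f)}$, $\|d_{tt}\bbeta_{p,h}^1\|_{L^2(\Om_p)}$, and $\|d_t p_{p,h}^1\|_{L^2(\Om_p)}$ are estimated by $C_4$ by evaluating \eqref{fully1}--\eqref{fully2} at $n=0$ and testing as in \eqref{eta-0}--\eqref{p-0}, using $\bbeta_{p,h}^{-1}=0$ so that $d_t\bbeta_{p,h}^0=0$.

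The smallness bound \eqref{stability-Duf} at level $n+1$ is then extracted from the untested discrete energy equation in the style of the proof of \eqref{semi-bounds-3}: isolating $(3\mu_f/2)\|\D(\bu_{f,h}^{n+1})\|^2_{L^2(\Om_f)}$ on the left and bounding the right-hand side using \eqref{stability1}--\eqref{stability2} at level $n+1$ (just established) together with the discrete small data hypothesis \eqref{dis-smalldata}, which is precisely calibrated to force $\|\D(\bu_{f,h}^{n+1})\|_{L^2(\Om_f)}< \mu_f/(2\rho_f S_f^2 K_f^3)$. This closes the induction. The Navier-Stokes pressure bound \eqref{stability-pf} is then recovered, at each time level, by viewing \eqref{fully1} as defining a continuous linear functional on $\bbV_{f,h}$ whose $b_f$-image equals $p_{f,h}^{n+1}$, and applying the discrete inf-sup \eqref{inf-sup-pf-dis} together with \eqref{a-bjs}, \eqref{cont-b-gamma-h}, and \eqref{stability-Duf}, exactly as in \eqref{pf-bound}; squaring and summing in time yields \eqref{stability-pf}.

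The main obstacle is the apparent circularity that the energy estimates require the smallness bound to tame the convection, yet the smallness bound is normally extracted from the energy estimates. The resolution, and what makes the time-lagged linearization so convenient here, is that the convective term at level $n+1$ depends only on $\bu_{f,h}^n$: the inductive smallness hypothesis at step $n$ is therefore enough to drive the energy estimates at step $n+1$, from which the new smallness bound at step $n+1$ is deduced after the fact. A secondary subtlety is handling $d_{tt}\bbeta_{p,h}^1$ and $d_t \bu_{f,h}^1$ when applying $d_t$ to \eqref{fully1}--\eqref{fully3}, which requires reading off initial-time analogs of \eqref{eta-0}--\eqref{p-0} directly from the $n=0$ instance of the scheme.
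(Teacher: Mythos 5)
Your proposal is correct and follows essentially the same argument as the paper: an induction over time steps establishing solvability (via linearity, the lagged convection, and the discrete inf-sup conditions), the two energy bounds (testing with the solution, respectively the $d_t$-differenced scheme, absorbing convection through the inductive smallness bound, controlling $p_{p,h}$ and $\lambda_h$ by \eqref{inf-sup-p-lambda-dis}, and applying the discrete Gronwall lemma), the smallness bound extracted afterwards from the un-summed energy identity together with \eqref{dis-smalldata}, and finally the Stokes inf-sup recovery of $p_{f,h}$. The only slip is in your splitting of $d_t$ applied to the lagged convective term: the correct decomposition gives $\rho_f\,\bu_{f,h}^{j}\cdot\grad d_t\bu_{f,h}^{j+1}$ and $\rho_f\, d_t\bu_{f,h}^{j}\cdot\grad\bu_{f,h}^{j}$ (velocities at old levels, covered by the induction hypothesis, cf.\ \eqref{higher-skew}), not $\bu_{f,h}^{n+1}\cdot\grad d_t\bu_{f,h}^{n+1}$, which would require the not-yet-available smallness at level $n+1$; the correct algebra avoids this, so the plan stands.
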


\begin{proof}
We will prove by induction on $0 \le n \le N-1$ that a unique solution exists at time step $t_{n+1}$ and it satisfies
\begin{align}\label{disbounds1}
& \frac{\rho_f}{2}\|\bu_{f,h}^{n+1}\|^2_{ L^2(\Om_{f})}
+ 3\mu_f \Delta t\sum_{j=0}^{n}\|\D(\bu_{f,h}^{j+1})\|^2_{ L^2(\Om_{f})}
	+2\Delta t\sum_{j=0}^{n}|\bu_{f,h}^{j+1} - d_t{\bbeta}_{p,h}^{j+1}|^2_{a_{BJS}}
	\nonumber \\
	& \quad 
	+ \frac{\rho_p}{2}\|d_t{\bbeta}_{p,h}^{n+1}\|^2_{ L^2(\Om_{p})}
	+ 2\|\mu_p^{1/2}\D(\bbeta_{p,h}^{n+1})\|^2_{ L^2(\Om_{p})}
	+ \|\lambda_p^{1/2}\nabla \cdot \bbeta_{p,h}^{n+1}\|^2_{ L^2(\Om_{p})}
        + s_0\|p_{p,h}^{n+1}\|^2_{L^2(\Om_{p})} 
	\nonumber \\
	& \quad
        + \Delta t\sum_{j=0}^{n}\|K^{-1/2}\bu_{p,h}^{j+1}\|^2_{ L^2(\Om_{p})}
        + \frac{k_{min} \beta_p^2}{2}\Delta t\sum_{j=0}^{n}\|p_{p,h}^{j+1}\|^2_{L^2(\Omega_p)}
	+ k_{min} \beta_p^2\Delta t\sum_{j=0}^{n}\|\lambda_{h}^{j+1}\|^2_{L^2(\Gamma_{fp})}\nonumber \\
	&
	\leq \exp(t_{n+1})\Delta t \sum_{j=0}^{n}
\Big(\frac{2}{\rho_f}\|\f_{f}^{j+1}\|^2_{L^2(\Om_{f})} +\frac{2}{\rho_{p}}\|\f_{p}^{j+1}\|^2_{L^2(\Om_{p})} +\frac{2}{k_{min}\beta_p^2}\|q_p^{j+1}\|^2_{L^2(\Om_{p})}\Big),
\end{align}
\begin{align}\label{disbounds2}
  & \frac{\rho_f}{2}\|d_t\bu_{f,h}^{n+1}\|^2_{ L^2(\Om_{f})}
  + 2\mu_f \Delta t\sum_{j=0}^{n}\|\D(d_t\bu_{f,h}^{j+1})\|^2_{ L^2(\Om_{f})}
	+2\Delta t\sum_{j=0}^{n}|d_t\bu_{f,h}^{j+1} - d_{tt}{\bbeta}_{p,h}^{j+1}|^2_{a_{BJS}}
	\nonumber \\
	& \quad 
	+ \frac{\rho_p}{2} \|d_{tt}{\bbeta}_{p,h}^{n+1}\|^2_{ L^2(\Om_{p})}
	+ 2\|\mu_p^{1/2}\D(d_{t}\bbeta_{p,h}^{n+1})\|^2_{ L^2(\Om_{p})}
	+ \|\lambda_p^{1/2}\nabla \cdot d_{t}\bbeta_{p,h}^{n+1}\|^2_{L^2(\Om_{p})}
        + s_0\|d_{t}p_{p,h}^{n+1}\|^2_{L^2(\Om_{p})} 
	\nonumber \\
	& \quad + \Delta t\sum_{j=0}^{n}\|K^{-1/2}d_{t}\bu_{p,h}^{j+1}\|^2_{ L^2(\Om_{p})}
        + \frac{k_{min} \beta_p^2}{2}\Delta t\sum_{j=0}^{n}\|d_{t}p_{p,h}^{j+1}\|^2_{L^2(\Om_{p})}
	+ k_{min} \beta_p^2\Delta t\sum_{j=0}^{n}\|d_{t}\lambda_{h}^{j+1}\|^2_{L^2(\Gamma_{fp})}
        \nonumber\\
	& 
	\leq \exp(t_{n+1})\bigg(
\Delta t \sum_{j = 1}^{n}\left( \frac{2}{\rho_f}\|d_{t}\f_{f}^{j+1}\|^2_{L^2(\Om_{f})} + \frac{2}{\rho_{p}}\|d_{t}\f_{p}^{j+1}\|^2_{L^2(\Om_{p})} + \frac{2}{k_{min}\beta_p^2}\|d_{t}q_p^{j+1}\|^2_{L^2(\Om_{p})}\right) \nonumber\\
	&\quad +\frac{1}{\rho_{f}}\|\f_{f}^1\|^2_{L^2(\Om_{f})} +\frac{1}{\rho_{p}}\|\f_{p}^1\|^2_{L^2(\Om_{p})} +\frac{1}{s_0}\|q_{p}^1\|^2_{L^2(\Om_{p})}\bigg),
\end{align}
and
\begin{equation}\label{dis-Duf}
\|\D(\bu_{f,h}^{n+1})\|_{L^2(\Om_{f})}\leq \frac{\mu_f}{2\rho_{f}S_f^2K_f^3}.
\end{equation}
We note that the sum $\sum_{j = 1}^{n}$ in \eqref{disbounds2} is empty for $n = 0$.

\medskip
\noindent
    {\bf Initial induction step:} $n=0$. Since the algebraic system \eqref{fully1}--\eqref{fully3} at each time step is linear, it is sufficient to show uniqueness of the solution $(\bu_{f,h}^{1}, p_{f,h}^{1}, \bu_{p,h}^{1}, p_{p,h}^{1}, \bbeta_{p,h}^{1}, \lambda_{h}^{1})$. Consider \eqref{fully1}--\eqref{fully3} with $n = 0$. Recall that all initial data is zero, including $\bu_{f,h}^{0}=0$, which implies
\begin{equation*}
(\rho_{f}\bu_{f,h}^{0}\cdot \grad \bu_{f,h}^{1},\bu_{f,h}^{1})_{\Om_{f}}=0.
\end{equation*}
To establish uniqueness, we take the source terms $\f_{f}^1$, $\f_{p}^1$ and $q_p^1$ to be zero and show that the solution is zero. Taking $\bbv_{f,h}=\bu_{f,h}^{1}$, $w_{f,h}=p_{f,h}^{1}$, $\bbv_{p,h}=\bu^{1}_{p,h}$, $w_{p,h}=p_{p,h}^{1}$, 
$\bxi_{p,h}=\bbeta_{p,h}^{1}/\Delta t$, and $\mu_h=\lambda_h^{1}$, we obtain
\begin{align}
&\frac{\rho_{f}}{\Delta t} \|\bu_{f,h}^{1}\|_{L^2(\Om_{f})}^2 + a_f(\bu_{f,h}^{1},\bu_{f,h}^{1}) + \frac{\rho_{p}}{\Delta t^3}\|\bbeta_{p,h}^{1}\|_{L^2(\Om_{p})}^2 + \frac{1}{\Delta t} a_p^e(\bbeta_{p,h}^{1}, \bbeta_{p,h}^{1})\nonumber \\
  & \qquad + a_p^d(\bu^{1}_{p,h},\bu^{1}_{p,h})
  + \left|\bu_{f,h}^{1} - \frac{\bbeta_{p,h}^{1}}{\Delta t}\right|^2_{a_{BJS}}
  + \frac{s_0}{\Delta t}\|p_{p,h}^{1}\|_{L^2(\Om_{p})}^2 = 0,
\end{align}
which implies that $\bu_{f,h}^{1} = 0$, $\bu_{p,h}^{1} = 0$, $p_{p,h}^{1} = 0$, and $\bbeta_{p,h}^{1} = 0$. The inf-sup conditions \eqref{inf-sup-p-lambda-dis} and \eqref{inf-sup-pf-dis} imply, respectively, that $\lambda_{h}^{1} = 0$ and $p_{f,h}^{1} = 0$, completing the proof of uniqueness for $n = 0$. 

The proof of \eqref{disbounds1} for $n = 0$ is identical to the proof for any $n = 0,\ldots,N-1$, and we leave it for the general induction step. To prove \eqref{disbounds2} for $n=0$, in \eqref{fully1}--\eqref{fully3} with $n=0$ we take 
$\bbv_{f,h}=d_t\bu_{f,h}^{1}$, $w_{f,h}=\frac{p_{f,h}^1}{\Delta t}$, $\bbv_{p,h}^{1}=d_t\bu_{p,h}^1$, $w_{p,h}=d_tp_{p,h}^1$, $\bxi_{p,h}=d_{tt}\bbeta_{p,h}^1$, and $\mu_h=d_t\lambda_{h}^1$. Using that $\bu_{f,h}^0=0$, $\bu_{p,h}^0=0$, $\bbeta_{p,h}^0=0$, $\bbeta_{p,h}^{-1}=0$, $p_{p,h}=0$, and $\lambda_{h}^0=0$, we have
\begin{gather*}
(\rho_{f}\bu_{f,h}^{0}\cdot \grad \bu_{f,h}^{1},\bu_{f,h}^{1})_{\Om_{f}}=0, \quad b_f(d_t\bu_{f,h}^{1}, p_{f,h}^1)=b_f(\bu_{f,h}^{1}, \frac{p_{f,h}^1}{\Delta t}),\quad b_p(d_t\bu_{p,h}^{1}, p_{p,h}^1)=b_p(\bu_{p,h}^{1}, d_tp_{p,h}^1), \\
\alpha b_p(d_{tt}\bbeta_{p,h}^1, p_{p,h}^1)=\alpha b_p(d_t\bbeta_{p,h}^1, d_tp_{p,h}^1), \\
b_{\Gamma}(d_t\bu_{f,h}^{1},d_t\bu_{p,h}^{1},d_{tt}\bbeta_{p,h}^{1};\lambda_h)
= b_{\Gamma} (\bu_{f,h}^{1},\bu_{p,h}^{1},d_t\bbeta_{p,h}^{1};d_t\lambda_h).
\end{gather*}
Therefore, we obtain
\begin{align*}
 &\rho_{f}\|d_t\bu_{f,h}^{1}\|^2_{L^2(\Om_{f})} + \Delta t \,a_f(\D(d_t\bu_{f,h}^{1}),\D(d_t\bu_{f,h}^{1}))
+ \Delta t |d_t \bu_{f,h}^{1} - d_{tt}\bbeta_{p,h}^{1}|^2_{a_{BJS}} \nonumber\\
& \qquad
+ \rho_{p}\|d_{tt}\bbeta_{p,h}^{1}\|^2_{L^2(\Om_{p})}
+ a_p^e(d_t\bbeta_{p,h}^{1},d_t\bbeta_{p,h}^{1})
+ s_0\|d_tp_{p,h}^{1}\|^2_{L^2(\Om_{p})}
+ \Delta t \, a_p^d(d_t\bu_{p,h}^1,d_t\bu_{p,h}^1)
\nonumber\\
& \quad
=(\f_{f}^{1},d_t\bu_{f,h}^{1})_{\Om_{f}} + (\f_{p}^{1},d_{tt}\bbeta_{p,h}^{1})_{\Om_{p}}
+ (q_p^{1},d_t p_{p,h}^{1})_{\Om_ {p}} \nonumber\\
& \quad \leq
\frac{1}{2\rho_{f}}\|\f_{f}^1\|^2_{L^2(\Om_{f})}
+ \frac{\rho_f}{2}\|d_t\bu_{f,h}^{1}\|^2_{L^2(\Om_{f})} 
+ \frac{1}{2\rho_{p}}\|\f_{p}^1\|^2_{L^2(\Om_{p})}
+ \frac{\rho_p}{2}\|d_{tt}\bbeta_{p,h}^{1}\|^2_{L^2(\Om_{p})} \nonumber \\
& \qquad + \frac{1}{2s_0}\|q_{p}^1\|^2_{L^2(\Om_{p})}
+ \frac{s_0}{2}\|d_tp_{p,h}^{1}\|^2_{L^2(\Om_{p})},
\end{align*}
which implies
\begin{align}
  &\rho_{f}\|d_t\bu_{f,h}^{1}\|^2_{L^2(\Om_{f})}
+ 2 \mu_f \Delta t \|\D(d_t\bu_{f,h}^{1})\|_{\Omega_f}^2
  + 2\Delta t |d_t \bu_{f,h}^{1} - d_{tt}\bbeta_{p,h}^{1}|^2_{a_{BJS}}
  \nonumber \\
& \qquad
  + \rho_{p}\|d_{tt}\bbeta_{p,h}^{1}\|^2_{L^2(\Om_{p})}
  + 4 \|\mu_p^{1/2}\D(d_{t}\bbeta_{p,h}^{1})\|^2_{ L^2(\Om_{p})}
  + 2 \|\lambda_p^{1/2}\nabla \cdot d_{t}\bbeta_{p,h}^{n+1}\|^2_{L^2(\Om_{p})} \nonumber \\
& \qquad
  + s_0\|d_tp_{p,h}^{1}\|^2_{L^2(\Om_{p})}
  + 2 \Delta t \|K^{-1/2}d_{t}\bu_{p,h}^{1}\|^2_{ L^2(\Om_{p})}
    \nonumber \\
& \quad  
  \le \frac{1}{\rho_{f}}\|\f_{f}^1\|^2_{L^2(\Om_{f})} + \frac{1}{\rho_{p}}\|\f_{p}^1\|^2_{L^2(\Om_{p})} + \frac{1}{s_0}\|q_{p}^1\|^2_{L^2(\Om_{p})}.
  \label{dis-initial}
\end{align}
Next, using the inf-sup condition \eqref{inf-sup-p-lambda-dis} and
$$
a_{p}^{d}(d_t \bu_{p,h}^{1},\bbv_{p,h}) + b_p(\bbv_{p,h},d_t p_{p,h}^{1})
+ \langle \bbv_{p,h}\cdot \n_p,d_t \lambda_h^1\rangle_{\Gamma_{fp}} = 0,
$$
which follows from \eqref{fully1} and the fact that all initial data are zero, we obtain, similarly to \eqref{inf-sup-bound-dt},
\begin{equation}\label{inf-sup-dt1}
  \Delta t \, k_{min}\beta_p^2(\|d_t p_{p,h}^1\|_{L^2(\Om_{p})}^2 + \|d_t \lambda_{h}^1\|_{L^2(\Gamma_{fp})}^2)
  \le \Delta t \|K^{-1/2} d_t\bu_{p,h}^1\|_{L^2(\Omega_p)}^2.
\end{equation}
Summing \eqref{dis-initial} and \eqref{inf-sup-dt1}, we obtain
\begin{align}
  &\rho_{f}\|d_t\bu_{f,h}^{1}\|^2_{L^2(\Om_{f})}
+ 2 \mu_f \Delta t \|\D(d_t\bu_{f,h}^{1})\|_{\Omega_f}^2
  + 2\Delta t |d_t \bu_{f,h}^{1} - d_{tt}\bbeta_{p,h}^{1}|^2_{a_{BJS}}
  \nonumber \\
& \qquad
  + \rho_{p}\|d_{tt}\bbeta_{p,h}^{1}\|^2_{L^2(\Om_{p})}
  + 4 \|\mu_p^{1/2}\D(d_{t}\bbeta_{p,h}^{1})\|^2_{ L^2(\Om_{p})}
  + 2 \|\lambda_p^{1/2}\nabla \cdot d_{t}\bbeta_{p,h}^{n+1}\|^2_{L^2(\Om_{p})} \nonumber \\
& \qquad
  + s_0\|d_tp_{p,h}^{1}\|^2_{L^2(\Om_{p})}
  + \Delta t \big(\|K^{-1/2}d_{t}\bu_{p,h}^{1}\|^2_{ L^2(\Om_{p})} + k_{min}\beta_p^2(\|d_t p_{p,h}^1\|_{L^2(\Om_{p})}^2 + \|d_t \lambda_{h}^1\|_{L^2(\Gamma_{fp})}^2)\big)
    \nonumber \\
& \quad  
  \le \frac{1}{\rho_{f}}\|\f_{f}^1\|^2_{L^2(\Om_{f})} + \frac{1}{\rho_{p}}\|\f_{p}^1\|^2_{L^2(\Om_{p})} + \frac{1}{s_0}\|q_{p}^1\|^2_{L^2(\Om_{p})},
  \label{dt-1-bound}
\end{align}
concluding that \eqref{disbounds2} holds for $n = 0$.

It is left to show \eqref{dis-Duf} for $n=0$. Noting that $\D(d_t \bu_{f,h}^1) = \frac{1}{\Delta t}\D(\bu_{f,h}^1)$, \eqref{dis-initial} implies
\begin{align}
  \mu_f\|\D(\bu_{f,h}^{1})\|^2_{L^2(\Om_{f})} & \leq\frac{\Delta t}{2\rho_{f}}\|\f_{f}^1\|^2_{L^2(\Om_{f})}+\frac{\Delta t}{2\rho_{p}}\|\f_{p}\|^2_{L^2(\Om_{p})}+\frac{\Delta t}{2s_0}\|q_p^1\|^2_{L^2(\Om_{p})} \nonumber \\
  & \le \exp(t_1)\frac{C_4}{2} \le \frac{\mu_f^3}{4\rho_f^2S_f^4K_f^6}, \label{middle2}
\end{align} 
using \eqref{dis-smalldata} in the last inequality. This gives \eqref{dis-Duf} for $n=0$ and completes the initial induction step for $n=0$.

\medskip
\noindent
{\bf Induction hypothesis.}
Assume that the induction statement holds for any index $ 0 \le j \le n-1$, i.e., there exists a unique solution to \eqref{fully1}--\eqref{fully3} at $t_{j+1}$ and \eqref{disbounds1}--\eqref{dis-Duf} hold.

\medskip
\noindent
{\bf General induction step.} We now prove that there exists a unique solution to \eqref{fully1}--\eqref{fully3} at $t_{n+1}$ and \eqref{disbounds1}--\eqref{dis-Duf} hold.

\medskip
\noindent
{\bf Proof of existence and uniqueness.} Since the system \eqref{fully1}--\eqref{fully3} is linear, it is sufficient to show uniqueness. Assume there are two solutions of \eqref{fully1}--\eqref{fully3} and let $(\tilde{\bu}_{f,h}^{n+1}, \tilde{p}_{f,h}^{n+1}, \tilde{\bu}_{p,h}^{n+1}, \tilde{p}_{p,h}^{n+1}, \tilde{\bbeta}_{p,h}^{n+1}, \tilde{\lambda}_h^{n+1})$ be their difference. It holds that
\begin{align}\label{diff-div-freefully1}
  &\Big(\rho_f \frac{\tilde{\bu}_{f,h}^{n+1}}{\Delta t},\bbv_{f,h}\Big)_{\Om_{f}} + a_f(\tilde{\bu}_{f,h}^{n+1},\bbv_{f,h}) + (\rho_{f}\bu_{f,h}^{n}\cdot \grad \tilde{\bu}_{f,h}^{n+1},\bbv_{f,h})_{\Om_{f}}
  + \Big(\rho_{p}\frac{\tilde{\bbeta}_{p,h}^{n+1}}{\Delta t ^2},\bxi_{p,h}\Big)_{\Om_{p}} \nonumber  \\
&\quad +a_{p}^{e}(\tilde{\bbeta}_{p,h}^{n+1},\bxi_{p,h}) + \alpha b_p(\bxi_{p,h},\tilde{p}_{p,h}^{n+1}) + b_p(\bbv_{p,h},\tilde{p}_{p,h}^{n+1}) + a_{p}^{d}(\tilde{\bu}_{p,h}^{n+1},\bbv_{p,h}) \nonumber \\   
&\quad + b_f(\bbv_{f,h}, \tilde{p}_{f,h}^{n+1})+b_{\Gamma}(\bbv_{f,h},\bbv_{p,h},\frac{\bxi_{p,h}}{\Delta t};\tilde{\lambda}_h^{n+1}) + a_{BJS}\Big(\tilde{\bu}_{f,h}^{n+1},\frac{\tilde{\bbeta}_{p,h}^{n+1}}{\Delta t };\bbv_{f,h},\bxi_{p,h}\Big) = 0, \\
&\Big(s_0\frac{\tilde{p}_{p,h}^{n+1}}{\Delta t},w_{p,h}\Big)_{\Om_{p}} - \alpha b_p\Big(\frac{\tilde{\bbeta}_{p,h}^{n+1}}{\Delta t },w_{p,h}\Big) - b_p(\tilde{\bu}_{p,h}^{n+1},w_{p,h}) - b_f(\tilde{\bu}_{f,h}^{n+1}, w_{f,h}) = 0, \\
&b_{\Gamma}\Big(\tilde{\bu}_{f,h}^{n+1},\tilde{\bu}_{p,h}^{n+1},\frac{\tilde{\bbeta}_{p,h}^{n+1}}{\Delta t };\mu_h\Big) = 0.
\end{align}
We take $\bbv_{f,h}=\tilde{\bu}_{f,h}^{n+1}$, $w_{f,h}=\tilde{p}_{f,h}^{n+1}$, $\bbv_{p,h}=\tilde{\bu}^{n+1}_{p,h}$, $w_{p,h}=\tilde{p}_{p,h}^{n+1}$, 
$\bxi_{p,h}=\tilde{\bbeta}_{p,h}^{n+1}/\Delta t$, and $\mu_h=\tilde{\lambda}_h^{n+1}$, obtaining
\begin{align*}
&\frac{\rho_{f}}{\Delta t} \|\tilde{\bu}_{f,h}^{n+1}\|_{L^2(\Om_{f})}^2 + a_f(\tilde{\bu}_{f,h}^{n+1},\tilde{\bu}_{f,h}^{n+1})+\frac{\rho_{p}}{\Delta t^3}\|\tilde{\bbeta}_{p,h}^{n+1}\|_{L^2(\Om_{p})}^2+\frac{1}{\Delta t}a_p^e(\tilde{\bbeta}_{p,h}^{n+1}, \tilde{\bbeta}_{p,h}^{n+1})\nonumber \\
&\qquad +a_p^d(\tilde{\bu}^{n+1}_{p,h},\tilde{\bu}^{n+1}_{p,h}) + \left|\tilde{\bu}_{f,h}^{n+1}-\frac{\tilde{\bbeta}_{p,h}^{n+1}}{\Delta t}\right|^2_{a_{BJS}} + \frac{s_0}{\Delta t}\|\tilde{p}_{p,h}^{n+1}\|_{L^2(\Om_{p})}^2 \nonumber \\
& \quad = -(\rho_{f}\bu_{f,h}^{n}\cdot \grad \tilde{\bu}_{f,h}^{n+1},\tilde{\bu}_{f,h}^{n+1})_{\Om_{f}}.
\end{align*}
For the right hand side of the above equation, the induction hypothesis for \eqref{dis-Duf} implies that $\|\D(\bu_{f,h}^{n})\|_{L^2(\Om_{f})}\leq \frac{\mu_f}{2\rho_{f}S_f^2K_f^3}$. Then, similarly to \eqref{nonlin2}, we have
\begin{equation}\label{adv-bound}
|(\rho_{f}\bu_{f,h}^{n}\cdot \grad \tilde{\bu}_{f,h}^{n+1},\tilde{\bu}_{f,h}^{n+1})_{\Om_{f}}| \leq \frac{\mu_f}{2}\|\D(\tilde{\bu}_{f,h}^{n+1})\|^2_{L^2(\Om_{f})},
\end{equation}
which is controlled by the term $a_f(\tilde{\bu}_{f,h}^{n+1},\tilde{\bu}_{f,h}^{n+1})$ on the left hand side. We conclude that $\tilde\bu_{f,h}^{n+1}=0$, $\tilde\bu_{p,h}^{n+1}=0$, $\tilde p_{p,h}^{n+1}=0$, and $\tilde\bbeta_{p,h}^{n+1}=0$. The inf-sup conditions \eqref{inf-sup-p-lambda-dis} and \eqref{inf-sup-pf-dis} imply that $\tilde\lambda_{h}^{n+1} = 0$ and $\tilde p_{f,h}^{n+1} = 0$, completing the uniqueness proof.

\medskip
\noindent
{\bf Proof of \eqref{disbounds1}.} 
Consider \eqref{fully1}--\eqref{fully3} at time step $t_{j+1}$, $0\leq j\leq n$, and take the test functions $\bbv_{f,h}=\bu_{f,h}^{j+1}$, $w_{f,h}=p_{f,h}^{j+1}$, $\bbv_{p,h}=\bu_{p,h}^{j+1}$, $w_{p,h}=p_{p,h}^{j+1}$, $\bxi_{p,h}=d_t\bbeta_{p,h}^{j+1}$, $\mu_{h}=\lambda_{h}^{j+1}$. Adding the equations and using \eqref{discrete time analog}, we obtain the energy equality
\begin{align}
  &\frac{1}{2}d_t\left(\rho_{f}\|\bu_{f,h}^{j+1}\|^2_{L^2(\Om_{f})} + s_0\|p_{p,h}^{j+1}\|^2_{L^2(\Om_{p})} + a_p^e(\bbeta_{p,h}^{j+1},\bbeta_{p,h}^{j+1}) + \rho_{p}\|d_{t}\bbeta_{p,h}^{j+1}\|^2_{L^2(\Om_{p})} \right) \nonumber \\
  & \qquad + \frac{\Delta t}{2}\left(\rho_{f}\|d_t\bu_{f,h}^{j+1}\|^2_{L^2(\Om_{f})} + s_0\|d_tp_{p,h}^{j+1}\|^2_{L^2(\Om_{p})} + a_p^e(d_t\bbeta_{p,h}^{j+1},d_t\bbeta_{p,h}^{j+1}) + \rho_p\|d_{tt}\bbeta_{p,h}^{j+1}\|^2_{L^2(\Om_{p})}  \right) \nonumber \\
& \qquad 
+ a_f(\bu_{f,h}^{j+1},\bu_{f,h}^{j+1}) + a_p^d(\bu_{p,h}^{j+1},\bu_{p,h}^{j+1}) + |\bu_{f,h}^{j+1}-d_t\bbeta_{p,h}^{j+1}|^2_{a_{BJS}} \nonumber \\
& \quad = (\f_{f}^{j+1},\bu_{f,h}^{j+1})_{\Om_{f}}+(\f_{p}^{j+1},d_t\bbeta_{p,h}^{j+1})_{\Om_{p}}+(q_p^{j+1},p_{p,h}^{j+1})_{\Om_{p}}-(\rho_{f}\bu_{f,h}^{j}\cdot \grad \bu_{f,h}^{j+1},\bu_{f,h}^{j+1})_{\Om_{f}}. \label{energy-eq}
\end{align}
For the last term on the right hand side, similarly to \eqref{adv-bound}, we have
\begin{equation}
|(\rho_{f}\bu_{f,h}^{j}\cdot \grad \bu_{f,h}^{j+1},\bu_{f,h}^{j+1})_{\Om_{f}}|\leq \frac{\mu_f}{2}\|\D(\bu_{f,h}^{j+1})\|^2_{L^2(\Om_{f})}.
\end{equation}
The rest of the terms on the right hand side of \eqref{energy-eq} can be bounded using the
Cauchy-Schwarz and Young's inequalities:
\begin{align}
&(\f_{f}^{j+1},\bu_{f,h}^{j+1})_{\Om_{f}} + (q_p^{j+1},p_{p,h}^{j+1})_{\Om_{p}} + (\f_{p}^{j+1},d_t\bbeta_{p,h}^{j+1})_{\Om_{p}}\nonumber\\
& \quad \leq \frac{1}{\rho_f}\|\f_{f}^{j+1}\|^2_{L^2(\Om_{f})} + \frac{\rho_f}{4}\|\bu_{f,h}^{j+1}\|^2_{L^2(\Om_{f})} + \frac{1}{\beta_p^2k_{min}}\|q_{p}^{j+1}\|^2_{L^2(\Om_{p})} + \frac{\beta_p^2k_{min}}{4}\|p_{p,h}^{j+1}\|^2_{L^2(\Om_{p})}\nonumber\\
  & \qquad +\frac{1}{\rho_p}\|\f_{p}^{j+1}\|^2_{L^2(\Om_{p})}+\frac{\rho_p}{4}\|d_t\bbeta_{p,h}^{j+1}\|^2_{L^2(\Om_{p})}.
  \label{source-terms}
\end{align}
Combining \eqref{energy-eq}--\eqref{source-terms}, summing over the time index $j=0,\ldots, n$, using the zero initial conditions, and multiplying by $2 \Delta t$, we obtain
\begin{align}\label{144}
  &\rho_{f}\|\bu_{f,h}^{n+1}\|^2_{L^2(\Om_{f})}
  + 3\mu_f\Delta t\sum_{j=0}^{n}\|\D(\bu_{f,h}^{j+1})\|^2_{L^2(\Om_{f})}
  + \rho_{p}\|d_t\bbeta_{p,h}^{n+1}\|^2_{L^2(\Om_{p})}
  + a_p^e(\bbeta_{p,h}^{n+1},\bbeta_{p,h}^{n+1})
  \nonumber\\
& \quad 
  +s_0\|p_{p,h}^{n+1}\|^2_{L^2(\Om_{p})} + 2\Delta t\sum_{j=0}^{n}\left(a_p^d(\bu_{p,h}^{j+1},\bu_{p,h}^{j+1})
  +|\bu_{f,h}^{j+1}-d_t\bbeta_{p,h}^{j+1}|^2_{a_{BJS}}\right) \nonumber \\
  & \ \leq \Delta t\sum_{j=0}^{n}\Big(\frac{2}{\rho_f}\|\f_{f}^{j+1}\|^2_{L^2(\Om_{f})}
  + \frac{2}{\beta_p^2k_{min}}\|q_{p}^{j+1}\|^2_{L^2(\Om_{p})}
  + \frac{\beta_p^2k_{min}}{2}\|p_{p,h}^{j+1}\|^2_{L^2(\Om_{p})}
  + \frac{2}{\rho_p}\|\f_{p}^{j+1}\|^2_{L^2(\Om_{p})} \Big)
  \nonumber \\
  & \quad
  + \frac{\rho_{f}}{2}\|\bu_{f,h}^{n+1}\|^2_{L^2(\Om_{f})}
  + \frac{\rho_{p}}{2}\|d_t\bbeta_{p,h}^{n+1}\|^2_{L^2(\Om_{p})}
  + \Delta t\sum_{j=0}^{n-1}\Big(
  \frac{\rho_f}{2}\|\bu_{f,h}^{j+1}\|^2_{L^2(\Om_{f})}  
  + \frac{\rho_p}{2}\|d_t\bbeta_{p,h}^{j+1}\|^2_{L^2(\Om_{p})}\Big),
\end{align}
where we used that $\Delta t \le 1$.
The discrete inf-sup condition \eqref{inf-sup-p-lambda-dis} implies, similarly to \eqref{inf-sup-bound},
\begin{align}\label{inf_sup_l2}
\beta_p^2k_{min}\sum_{j=0}^{n}\left(\|p_{p,h}^{j+1}\|^2_{L^2(\Om_p)} +\|\lambda_{h}^{j+1}\|^2_{L^2(\Gamma_{fp})}\right) \leq \sum_{j=0}^{n}\|K^{-1/2}\bu_{p,h}^{j+1}\|^2_{L^2(\Om_p)}.
\end{align}
Combining \eqref{144}--\eqref{inf_sup_l2} and using the discrete Gronwall Lemma~\ref{disc-Gronwall} for the last two terms on the right hand side of \eqref{144} results in \eqref{disbounds1}. 

\medskip
\noindent
{\bf Proof of \eqref{disbounds2}.}
We first recall that \eqref{disbounds2} for $n = 0$ was established in the initial induction step. Now,
applying the discrete time derivative operator $d_t$ to \eqref{fully1}--\eqref{fully3} for $1\leq j\leq n$ and taking test functions $\bbv_{f,h}=d_t\bu_{f,h}^{j+1}$, $w_{f,h}=d_tp_{f,h}^{j+1}$, $\bbv_{p,h}=d_t\bu_{p,h}^{j+1}$, $w_{p,h}=d_tp_{p,h}^{j+1}$, $\bxi_{p,h}=d_{tt}\bbeta_{p,h}^{j+1}$, and $\mu_{h}=d_t\lambda_{h}^{j+1}$, we obtain, similarly to \eqref{energy-eq}, the energy equality
\begin{align}
  &\frac{1}{2}d_t\left(\rho_{f}\|d_t\bu_{f,h}^{j+1}\|^2_{L^2(\Om_{f})} + s_0\|d_t p_{p,h}^{j+1}\|^2_{L^2(\Om_{p})} + a_p^e(d_t\bbeta_{p,h}^{j+1},d_t\bbeta_{p,h}^{j+1}) + \rho_{p}\|d_{tt}\bbeta_{p,h}^{j+1}\|^2_{L^2(\Om_{p})} \right) \nonumber \\
  & \qquad + \frac{\Delta t}{2}\left(\rho_{f}\|d_{tt}\bu_{f,h}^{j+1}\|^2_{L^2(\Om_{f})} + s_0\|d_{tt}p_{p,h}^{j+1}\|^2_{L^2(\Om_{p})} + a_p^e(d_{tt}\bbeta_{p,h}^{j+1},d_{tt}\bbeta_{p,h}^{j+1}) + \rho_p\|d_{ttt}\bbeta_{p,h}^{j+1}\|^2_{L^2(\Om_{p})}  \right) \nonumber \\
& \qquad 
+ a_f(d_t\bu_{f,h}^{j+1},d_t\bu_{f,h}^{j+1}) + a_p^d(d_t\bu_{p,h}^{j+1},d_t\bu_{p,h}^{j+1}) + |d_t\bu_{f,h}^{j+1}-d_{tt}\bbeta_{p,h}^{j+1}|^2_{a_{BJS}} \nonumber \\
& \quad = (d_t\f_{f}^{j+1},d_t\bu_{f,h}^{j+1})_{\Om_{f}} +(d_t\f_{p}^{j+1},d_{tt}\bbeta_{p,h}^{j+1})_{\Om_{p}} +(d_t q_p^{j+1},d_t p_{p,h}^{j+1})_{\Om_{p}}-(\rho_{f}d_t(\bu_{f,h}^{j}\cdot \grad \bu_{f,h}^{j+1}),d_t\bu_{f,h}^{j+1})_{\Om_{f}}. \label{energy-eq-dt}
\end{align}
For the last term above, using \eqref{dis-Duf}, we have
\begin{align}\label{higher-skew}
&\Big(\rho_{f}\frac{\bu_{f,h}^{j}\cdot \grad \bu_{f,h}^{j+1}-\bu_{f,h}^{j-1}\cdot \grad \bu_{f,h}^{j}}{\Delta t},d_t\bu^{j+1}_{f,h}\Big)_{\Om_{f}} \nonumber\\
&=\Big(\rho_{f}\frac{\bu_{f,h}^{j}\cdot \grad \bu_{f,h}^{j+1}-\bu_{f,h}^{j}\cdot \grad \bu_{f,h}^{j}}{\Delta t},d_t\bu^{j+1}_{f,h}\Big)_{\Om_{f}}+\Big(\rho_{f}\frac{\bu_{f,h}^{j}\cdot \grad \bu_{f,h}^{j}-\bu_{f,h}^{j-1}\cdot \grad \bu_{f,h}^{j}}{\Delta t},d_t\bu^{j+1}_{f,h}\Big)_{\Om_{f}}\nonumber\\
&\leq \rho_{f}\|\bu_{f,h}^{j}\|_{L^4(\Om_{f})} \|\grad d_t\bu_{f,h}^{j+1}\|_{L^2(\Om_{f})} \|d_t\bu_{f,h}^{j+1}\|_{L^4(\Om_{f})} +\rho_{f}\|d_t\bu_{f,h}^{j}\|_{L^4(\Om_{f})}\|\grad\bu_{f,h}^{j}\|_{L^2(\Om_{f})}\|d_t\bu_{f,h}^{j+1}\|_{L^4(\Om_{f})}\nonumber\\
	&\leq \frac{\mu_f}{2} \|\D(d_t\bu_{f,h}^{j+1})\|^2_{L^2(\Om_{f})} + \frac{\mu_f}{2}\|\D(d_t\bu_{f,h}^{j+1})\|_{L^2(\Om_{f})}\|\D(d_t\bu_{f,h}^{j})\|_{L^2(\Om_{f})}\nonumber\\
	&\leq \frac{3\mu_f}{4}\|\D(d_t\bu_{f,h}^{j+1})\|^2_{L^2(\Om_{f})}
	+ \frac{\mu_f}{4}\|\D(d_t\bu_{f,h}^{j})\|^2_{L^2(\Om_{f})},
\end{align}
where the next to the last inequality is obtained similarly to \eqref{nonlin2}, using the induction hypothesis for \eqref{dis-Duf}. We note that the two terms on the right hand side above are combined with the term $a_f(d_t\bu_{f,h}^{j+1},d_t\bu_{f,h}^{j+1})$ on the left hand side of \eqref{energy-eq-dt}, resulting in
$$
\mu_f\|\D(d_t\bu_{f,h}^{j+1})\|^2_{L^2(\Om_{f})} + \frac{\mu_f}{4}\big(\|\D(d_t\bu_{f,h}^{j+1})\|^2_{L^2(\Om_{f})} - \|\D(d_t\bu_{f,h}^{j})\|^2_{L^2(\Om_{f})}\big),
$$
with the second term being of telescoping type.
Now, bounding the source terms on the left hand side of \eqref{energy-eq-dt} as in \eqref{source-terms}, utilizing \eqref{higher-skew}, summing \eqref{energy-eq-dt} over $j$ from $1$ to $n$, multiplying by $2\Delta t$, and combining with an inf-sup inequality similar to \eqref{inf_sup_l2}, we arrive at
\begin{align}\label{dis-bounds}
  &\frac{\rho_{f}}{2}\|d_t\bu_{f,h}^{n+1}\|^2_{L^2(\Om_{f})} + s_0\|d_tp_{p,h}^{n+1}\|^2_{L^2(\Om_{p})}
  + a_p^e(d_t\bbeta_{p,h}^{n+1},d_t\bbeta_{p,h}^{n+1})
  + \frac{\rho_{p}}{2}\|d_{tt}\bbeta_{p,h}^{n+1}\|^2_{L^2(\Om_{p})} \nonumber\\
  &\,\
  + 2\mu_f \Delta t \sum_{j=1}^{n}\|\D(d_t\bu_{f,h}^{j+1})\|^2_{L^2(\Om_{f})}
 +\frac{\mu_f\Delta t}{4}\|\D(d_t\bu_{f,h}^{n+1})\|^2_{L^2(\Om_{f})}
  +\Delta t\sum_{j=1}^{n}\|K^{-1/2}d_t\bu_{p,h}^{j+1}\|^2_{L^2(\Om_{p})}
  \nonumber\\
  &\,\
  +\frac{k_{min}\beta_p^2}{2}\Delta t\sum_{j=1}^{n}\|d_tp_{p,h}^{j+1}\|^2_{L^2(\Om_{p})}
  +k_{min}\beta_p^2\Delta t\sum_{j=1}^{n}\|d_t\lambda_{h}^{j+1}\|^2_{L^2(\Gamma_{fp})}
  + 2\Delta t \sum_{j=1}^{n}|d_t\bu_{f,h}^{j+1}-d_{tt}\bbeta_{p,h}^{j+1}|^2_{a_{BJS}}\nonumber\\
  &\leq \rho_{f}\|d_t\bu_{f,h}^{1}\|^2_{L^2(\Om_{f})}+s_0\|d_tp_{p,h}^{1}\|^2_{L^2(\Om_{p})} +a_p^e(d_t\bbeta_{p,h}^{1},d_t\bbeta_{p,h}^{1}) +\rho_{p}\|d_{tt}\bbeta_{p,h}^{1}\|^2_{L^2(\Om_{p})}\nonumber\\
&\,\ +\frac{\mu_f\Delta t}{4}\|\D(d_t\bu_{f,h}^{1})\|^2_{L^2(\Om_{f})}+\Delta t\sum_{j=1}^{n}\Big(\frac{2}{\rho_f}\|d_t\f_{f}^{j+1}\|^2_{L^2(\Om_{f})} +\frac{2}{\rho_{p}}\|d_t\f_{p}^{j+1}\|^2_{L^2(\Om_{p})} \nonumber\\
  &\,\ +\frac{2}{k_{min}\beta_p^2}\|d_t q_p^{n+1}\|^2_{L^2(\Om_{p})}\Big)
  + \Delta t \sum_{j=1}^{n-1}\left(\frac{\rho_f}{2}\|d_t\bu_{f,h}^{j+1}\|^2_{L^2(\Om_{f})}
+ \frac{\rho_{p}}{2}\|d_{tt}\bbeta_{p,h}^{j+1}\|^2_{L^2(\Om_{p})}\right). 
\end{align}
We note that in the last two sums, the terms for $j = n$ have been combined with the corresponding terms on the left hand side, using that $\Delta t \le 1$.
Summing \eqref{dis-bounds} with the bound at $t_1$ \eqref{dt-1-bound} and using the discrete Gronwall's inequality from Lemma \ref{disc-Gronwall} for the last two terms in \eqref{dis-bounds} implies \eqref{disbounds2}.

\medskip
\noindent
{\bf Proof of \eqref{dis-Duf}.} In \eqref{fully1} take the test functions $\bbv_{f,h}=\bu_{f,h}^{n+1}$, $w_{f,h}=p_{f,h}^{n+1}$, $\bbv_{p,h}=\bu_{p,h}^{n+1}$, $w_{p,h}=p_{p,h}^{n+1}$, $\bxi_{p,h}=d_t\bbeta_{p,h}^{n+1}$, and $\mu_{h}=\lambda_{h}^{n+1}$, obtaining
\begin{align}
& 2\mu_f\|\D(\bu_{f,h}^{n+1})\|^2_{L^2(\Om_{f})} + \|K^{-1/2}\bu_{p,h}^{n+1}\|^2_{L^2(\Om_{p})}\nonumber\\
& \quad
  \leq (\f_{f}^{n+1},\bu_{f,h}^{n+1})_{\Om_{f}} + (\f_{p}^{n+1},d_t\bbeta_{p,h}^{n+1})_{\Om_{p}} + (q_p^{n+1},p_{p,h}^{n+1})_{\Om_{p}} - (\rho_{f}\bu_{f,h}^{n}\cdot \grad \bu_{f,h}^{n+1},\bu_{f,h}^{n+1})_{\Om_{f}}
  \nonumber\\
  &\qquad
  - (\rho_{f}d_t\bu_{f,h}^{n+1},\bu_{f,h}^{n+1})_{\Om_{f}}
  -(\rho_{p}d_{tt}\bbeta_{p,h}^{n+1},d_t\bbeta_{p,h}^{n+1} )_{\Om_{p}}
  - a_p^e(\bbeta_{p,h}^{n+1}, d_t\bbeta_{p,h}^{n+1}) - (s_0d_tp_{p,h}^{n+1}, p_{p,h}^{n+1})_{\Om_{p}}\nonumber\\
  & \quad \leq \frac{1}{2}\left(\rho_{f}\|\bu_{f,h}^{n+1}\|^2_{L^2(\Om_{f})}
  + \rho_{p}\|d_t\bbeta_{p,h}^{n+1}\|^2_{L^2(\Om_{p})} + a_p^e(\bbeta_{p,h}^{n+1},\bbeta_{p,h}^{n+1})
  + s_0\|p_{p,h}^{n+1}\|^2_{L^2(\Om_{p})}\right) \nonumber\\
  &\qquad +\frac{1}{2}\left(\rho_{f}\|d_t\bu_{f,h}^{n+1}\|^2_{L^2(\Om_{f})}
  + \rho_{p}\|d_{tt}\bbeta_{p,h}^{n+1}\|^2_{L^2(\Om_{p})} + a_p^e(d_t\bbeta_{p,h}^{n+1},d_t\bbeta_{p,h}^{n+1})
  + s_0\|d_tp_{p,h}^{n+1}\|^2_{L^2(\Om_{p})}\right)\nonumber\\
  &\qquad +\frac{1}{2\rho_f}\|\f_f^{n+1}\|^2_{L^2(\Om_{f})} + \frac{\rho_f}{2}\|\bu_{f,h}^{n+1}\|^2_{L^2(\Om_{f})}
  + \frac{1}{2\rho_{p}}\|\f_p^{n+1}\|^2_{L^2(\Om_{p})}
  + \frac{\rho_{p}}{2}\|d_t\bbeta_{p,h}^{n+1}\|^2_{L^2(\Om_{p})} \nonumber\\
  &\qquad + \frac{1}{2\beta_p^2k_{min}}\|q_p^{n+1}\|^2_{L^2(\Om_{p})}
  + \frac{\beta_p^2k_{min}}{2}\|p_{p,h}^{n+1}\|^2_{L^2(\Om_{p})} + \frac{\mu_{f}}{2}\|\D(\bu_{f,h}^{n+1})\|^2_{L^2(\Om_{f})},
  \label{bound-h-1}
\end{align}
where we used the induction hypothesis for \eqref{dis-Duf} and \eqref{nonlin2} for the last term.
The second-to-last term is controlled with the inf-sup inequality, cf. \eqref{inf_sup_l2},
$$
\beta_p^2k_{min}\left(\|p_{p,h}^{n+1}\|^2_{L^2(\Om_p)} +\|\lambda_{h}^{n+1}\|^2_{L^2(\Gamma_{fp})}\right) \leq \|K^{-1/2}\bu_{p,h}^{n+1}\|^2_{L^2(\Om_p)}.
$$
Combining \eqref{bound-h-1} with the above inequality and utilizing bounds \eqref{disbounds1} and \eqref{disbounds2}, we obtain
\begin{align*}
  \mu_f\|\D(\bu_{f,h}^{n+1})\|^2_{L^2(\Om_{f})} \leq
\exp(t_{n+1})\left(\Delta t \sum_{j = 0}^n \left(\frac43 C_1^{j+1} + \frac23 C_2^{j+1}\right)
+ \frac23 C_4 \right) + \frac16 C_1^{n+1} < \frac{\mu_f^3}{4\rho_f^2S_f^4K_f^6},
\end{align*}
where we used the discrete small data condition \eqref{dis-smalldata} for the last inequality. Therefore \eqref{dis-Duf} holds.

This completes the induction argument. Bounds \eqref{disbounds1} and \eqref{disbounds2} for $n = N-1$ imply \eqref{stability1} and \eqref{stability2}, respectively. Bound \eqref{dis-Duf} implies \eqref{stability-Duf}. It remains to establish \eqref{stability-pf}. Using the discrete Stokes inf-sup condition \eqref{inf-sup-pf-dis} and \eqref{fully1}, we obtain, for $0 \le n \le N-1$,
\begin{align}
  & \|p_{f,h}^{n+1}\|_{L^2(\Om_{f})} \nonumber \\
  & \quad \leq \frac{1}{\beta_f}\bigg(\rho_{f}\|d_{t}{\bu}_{f,h}^{n+1}\|_{L^2(\Om_{f})}
  + 2\mu_f\|\D(\bu_{f,h}^{n+1})\|_{L^2(\Om_{f})}
  +\rho_{f}S_f^2K_f^2\|\D(\bu_{f,h}^n)\|_{L^2(\Om_{f})}\|\D(\bu_{f,h}^{n+1})\|_{L^2(\Om_{f})}\nonumber\\
  & \qquad + C^{\Gamma}\|\lambda_h^{n+1}\|_{L^2(\Gamma_{fp})}
  + C^{BJS}(\|\bu_{f,h}^{n+1}\|_{H^1(\Omega_f)} + \|\d_t\bbeta_{p,h}^{n+1}\|_{H^1(\Omega_p)})
  +\|\f_f^{n+1}\|_{L^2(\Om_{f})}\bigg). \label{pf-bound-h}
\end{align}
where we used the continuity bounds \eqref{a-bjs} and \eqref{cont-b-gamma-h}.
Using \eqref{stability-Duf}, we have 
\begin{align*}
  \rho_{f}S_f^2K_f^2\|\D(\bu_{f,h}^n)\|_{L^2(\Om_{f})}
  \leq \frac{\mu_f}{2K_f},
\end{align*}
which, together with \eqref{pf-bound-h}, implies \eqref{stability-pf}. This completes the proof of the theorem.
\end{proof}

\section{Error analysis}\label{section5}
In this section, we analyze the error due to discretization in space and time. We denote by $k_f$ and $s_f$ the degrees of polynomials in
the spaces $\bbV_{f,h}$ and $W_{f,h}$, respectively. Let $k_p$ and $s_p$ be the
degrees of polynomials in the spaces $\bbV_{p,h}$ and $W_{p,h}$,
respectively. Let $k_s$ be the polynomial degree in $\X_{p,h}$. Due to \eqref{nonconforming}, 
the polynomial degree in $\Lambda_h$ is $k_p$.

\subsection{Approximation error}
Let $Q_{fh}$, $Q_{ph}$ and $Q_{\lambda h}$ be the $L^2$-projection operators onto $W_{f,h}$,
$W_{p,h}$, and $\Lambda_h$, respectively, satisfying:
\begin{align}
	& (p_{f}-Q_{fh}p_{f},w_{f,h})_{\Omega_f}=0,&& \forall \, w_{f,h} \in W_{f,h},
	\label{fluid-pressure-int}\\
	& (p_{p}-Q_{ph}p_{p},w_{p,h})_{\Omega_p}=0,&& \forall \, w_{p,h} \in W_{p,h},
	\label{darcy-pressure-int}\\
	&  \left\langle\lambda-Q_{\lambda h}\lambda,\mu_h\right\rangle_{\Gamma_{fp}}=0,&& 
	\forall \, \mu_h \in \tilde{\Lambda}_h \label{l-multuplier-int}.
\end{align}
These operators have the approximation properties \cite{ciarlet1978finite}:
\begin{align}
	&\|p_f - Q_{fh}p_f\|_{L^2(\Om_f )}\leq Ch^{r_{s_f}}\|p_f\|_{H^{r_{s_f}}(\Om_f)}, 
	& 0 \leq r_{s_f} \leq s_f + 1,
	\label{stokesPresProj}\\
	& \|p_p - Q_{ph}p_p\|_{L^2(\Om_p )} \leq Ch^{r_{s_p}}\|p_p\|_{H^{r_{s_p}}(\Om_p)}, 
	& 0 \leq r_{s_p} \leq s_p + 1,
	\label{darcyPresProj}\\
	&\|\lambda - Q_{\lambda h} \lambda\|_{L^{2}(\Gamma_{fp} )}\leq Ch^{ r_{\lambda}} 
	\|\lambda\|_{H^{ r_{\lambda}}(\Gamma_{fp})},
	& 0 \leq  r_{\lambda} \leq k_p +1. \label{LMProj}
\end{align}
Next, we consider a Stokes--like projection operator \cite{FPSI-LM},
$(S_{fh}, R_{fh}) : \bbV_f \rightarrow \bbV_{f,h} \times W_{f,h}$, defined for all $\bbv_f \in \bbV_f$ by
\begin{align*}
& a_f(S_{fh}\textbf{v}_f, \bbv_{f,h}) - b_f( \bbv_{fh},R_{fh}\textbf{v}_f) = a_f (\bbv_f, \bbv_{f,h} ), 
&&\forall \bbv_{f,h} \in \bbV_{f,h}, \\
& b_f(S_{fh}\textbf{v}_f,w_{f,h})= b_f(\textbf{v}_f,w_{f,h}), &&\forall w_{f,h} \in W_{f,h}, 
\end{align*}
which satisfies the approximation property
\begin{equation*}
\|\bbv_f - S_{fh}\bbv_f\|_{H^1(\Omega_f)} \le C h^{r_{k_f}}\|\bbv_f\|_{H^{r_{k_f}+1}(\Omega_f)}, \quad 0 \le r_{k_f} \le k_f.
\end{equation*}
Let $\Pi_{ph}$ be the canonical interpolant onto $\bbV_{p,h}$ satisfying for all $\bbv_p \in H^{1}(\Omega_p)$ \cite{BBF},
\begin{align*}
	& (\nabla \cdot \Pi_{ph}\bbv_p,w_{p,h})= (\nabla \cdot \bbv_p, w_{p,h}), && 
	\forall w_{p,h} \in W_{p,h},
	\\
	& \left\langle\Pi_{ph} \bbv_p\cdot\n_p, \bbv_{p,h}\cdot\n_p \right\rangle_{\Gamma_{fp}} = 
	\left\langle\bbv_p\cdot\n_p, \bbv_{p,h}\cdot\n_p \right\rangle_{\Gamma_{fp}}, && \forall 
	\bbv_{p,h} \in \bbV_{p,h},\\
	& \|\bbv_p-\Pi_{ph}\bbv_p\|_{L^2(\Om_p)}\leq Ch^{r_{k_p}}\|\bbv_p\|_{H^{r_{k_p}}(\Om_p)}, 
	\quad && 1 \leq r_{k_p} \leq k_p +1.
\end{align*}
Let $S_{sh}$ be the Scott-Zhang interpolant from $\X_p$ 
onto $\X_{p,h}$, satisfying \cite{scott1990finite}:
\begin{align*}
\|\bxi_p-S_{sh}\bxi_p\|_{L^2(\Om_p)} + h\,|\bxi_p-S_{sh}\bxi_p|_{H^1(\Om_p)}
\,\leq\, C\,h^{r_{k_s}}\|\bxi_p\|_{H^{r_{k_s}}(\Om_p)}, \quad 1 \leq r_{k_s} \leq k_s + 1\,.
\end{align*}
Next, consider an operator in the space that
satisfies the weak continuity of normal velocity condition. Let 
$$
\U = \{ (\bbv_f,\bbv_p,\bxi_p) \in \bbV_f \times  H^{1}(\Omega_p) \times \X_p : \
b_{\Gamma}(\bbv_{f},\bbv_{p},\bxi_{p};\mu)=0 \ \ \forall \mu\in \Lambda\}.
$$
Consider its discrete analog
$$
\U_h = \left\{ (\bbv_{f,h},\bbv_{p,h},\bxi_{p,h}) \in \bbV_{f,h} \times \bbV_{p,h} \times \X_{p,h} 
: \ b_{\Gamma}\left(\bbv_{f,h},\bbv_{p,h},\bxi_{p,h};\mu_h\right) = 0 \ \ \forall \mu_h \in \Lambda_h \right\}.
$$
An interpolation operator $I_h: \U \to \U_h$ is constructed in \cite[Section 5]{FPSI-LM} as a triple
$$
I_h(\bbv_f,\bbv_p,\bxi_p) = \left(I_{fh}\bbv_f , I_{ph}\bbv_p,I_{sh}\bxi_p\right),
$$
where $I_{fh}=S_{fh}$, $I_{sh}=S_{sh}$, and $I_{ph}$ is based on a correction of $\Pi_{ph}$ designed to satisfy the continuity of normal velocity. The interpolant $I_{h}$ has the following properties:
\begin{align}
	& b_{\Gamma}\left(I_{fh}\bbv_f,I_{ph}\bbv_p, I_{sh}\bxi_p;\mu_h\right) =0, 
	&& \forall  \mu_h \in \Lambda_h, \label{proj1}\\
	& b_f(I_{fh}\bbv_f  - \bbv_f, w_{f,h}) =0, && \forall w_{f,h} \in W_{f,h}, \label{proj2}
	\\ 
	& b_p(I_{ph}\bbv_p  - \bbv_p, w_{p,h}) =0, && \forall w_{p,h} \in W_{p,h}\label{proj3}. 
\end{align}
The approximation properties of the component of $I_h$ are established in \cite[Lemma 5.1]{FPSI-LM}:
for all sufficiently smooth $\bbv_f$, $\bbv_p$, and $\bxi_p$, 
\begin{align}
	&\|\bbv_f-I_{fh}\bbv_f\|_{H^1(\Om_f)} \leq Ch^{r_{k_f}}\|\bbv_f\|_{H^{r_{k_f}+1}(\Om_f)}, \quad
	0 \leq r_{k_f} \leq k_f,
	\label{eq:stokes-like-approx-prop} \\
	&\|\bbv_p - I_{ph}\bbv_p\|_{L^2(\Om_p )} \leq C\Big(h^{r_{k_p}}\|\bbv_p\|_{H^{r_{k_p}}(\Om_p )}
	+ h^{r_{k_f}}\|\bbv_f\|_{H^{r_{k_f}+1}(\Om_f )} + h^{r_{k_s}}\|\bxi_p\|_{H^{r_{k_s}+1}(\Om_p)}\Big),
	\nonumber\\
	& \qquad\qquad\qquad 1 \leq r_{k_p} \leq k_p +1, \,\, 0 \leq r_{k_f} \leq k_f, \,\,
	0 \leq r_{k_s} \leq k_s,
	\label{darcy-bound}\\
	&\|\div(\bbv_p - I_{ph}\bbv_p)\|_{L^2(\Om_p )} \leq Ch^{r_{k_p}}\|\bbv_p\|_{H^{r_{k_p}+1}(\Om_p )},\quad 0\leq r_{k_p}\leq k_p, \label{darcy-div-bounds}\\
	&\|\bxi_p - I_{sh}\bxi_p\|_{L^2(\Om_p )} + 
	h |\bxi_p - I_{sh}\bxi_p|_{H^1(\Om_p )} \leq Ch^{r_{k_s}}\|\bxi_p\|_{H^{r_{k_s}}(\Om_p)}, 
	\quad 1 \leq r_{k_s} \leq k_s + 1. \label{displ-bound} 
\end{align}

\subsection{Error estimates}

We proceed with estimating the error between (${\bf FDNS}$) and (${\bf LMWF1}$).
We introduce the errors for all variables at $t_{n}$
and split them into approximation and discretization errors:
\begin{subequations}\label{error splitting}
\begin{align}
		\MEE_f^{n}&:= \bu_f^{n}-\bu_{f,h}^{n} = (\bu_f^{n}-I_{fh}\bu_f^{n}) + (I_{fh}\bu_f^{n}-\bu_{f,h}^{n}) 
		:= \bchi_f^{n} +\bbphi_{f,h}^{n} ,  \\
		\MEE_p ^{n}&:= \bu_p^{n}-\bu_{p,h}^{n} = (\bu_p^{n}-I_{ph}\bu_p^{n}) + (I_{ph}\bu_p^{n}-\bu_{p,h}^{n}) := \bchi_p^{n} +\bbphi_{p,h}^{n} ,\\
		\MEE_s^{n} &:= \bbeta_p^{n}-\bbeta_{p,h}^{n} = (\bbeta_p^{n}-I_{sh}\bbeta_p^{n}) + (I_{sh}\bbeta_p^{n}-\bbeta_{p,h}^{n}) 
		:= \bchi_s^{n} +\bbphi_{s,h}^{n} , \\
		e_{fp}^{n} &:= p_f^{n}-p_{f,h}^{n} = (p_f^{n}- Q_{fh}p_f^{n})+( Q_{fh}p_f^{n}-p_{f,h}^{n}) :=\chi_{fp}^{n} +\phi_{fp,h}^{n} , \\
		e_{pp}^{n} &:= p_p^{n}-p_{p,h}^{n} = (p_p^{n}- Q_{ph}p_p^{n})+(Q_{ph}p_p^{n}- p_{p,h}^{n} ) :=\chi_{pp}^{n} +\phi_{pp,h}^{n} , \\
		e_{\lambda}^{n} &:= \lambda^{n}-\lambda_h^{n} = (\lambda^{n}-Q_{\lambda h}\lambda^{n}) +(Q_{\lambda h}\lambda^{n}-\lambda_h^{n} ) := \chi_{\lambda}
		^{n} +\phi_{\lambda h}^{n} .
	\end{align}
\end{subequations}
Note that, due to the zero initial conditions, all errors are zero at $t=0$. Also, for notational convenience we set $\bbeta_p^{-1} = 0$, which results in $\MEE_s^{-1} = 0$.

\begin{theorem}\label{error-theorem}
Let the small data condition \eqref{smalldata} and its discrete version \eqref{dis-smalldata} hold.
For the solutions of \eqref{LMWF1}--\eqref{LMWF3} and \eqref{fully1}--\eqref{fully3}, assuming 
sufficient smoothness of the weak solution, there exists a constant $C$ independent of the mesh size $h$ and time step $\Delta t$ such that
\begin{align}
&\|\bu_{f}-\bu_{f,h}\|_{l^{\infty}(0,T;L^2(\Om_{f}))}+	\|\bu_{f}-\bu_{f,h}\|_{l^{2}(0,T;H^1(\Om_{f}))}+\|\bu_{p}-\bu_{p,h}\|_{l^2(0, T;L^2(\Om_{p}))} \nonumber\\
&\qquad +\|p_{p}-p_{p,h}\|_{l^2(0, T;L^2(\Om_{p}))}+\sqrt{s_0}\|p_{p}-p_{p,h}\|_{l^\infty(0, T;L^2(\Om_{p}))}+\|\lambda-\lambda_{h}\|_{l^2(0, T;L^2(\Gamma_{fp}))}\nonumber\\
  &\qquad +\|d_t\bbeta_{p}-d_t\bbeta_{p,h}\|_{l^\infty(0,T; L^2(\Om_{p}))}
  + \|\bbeta_{p}-\bbeta_{p,h}\|_{l^2(0,T; H^1(\Om_{p}))} \nonumber\\
&\qquad +|(\bu_{f}-d_t\bbeta_{p})-(\bu_{f,h}-d_t\bbeta_{p,h})|_{l^2(0,T;a_{BJS})} \nonumber\\
&\quad \leq C\exp(T)\left(h^{\min\{k_f, s_f+1, k_p+1, s_p+1, k_s\}}+\Delta t\right). \label{error-bound}
\end{align}
\end{theorem}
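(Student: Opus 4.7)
The plan is to mirror the stability argument from the previous section, but applied to the error equations. First I subtract \eqref{fully1}--\eqref{fully3} from \eqref{LMWF1}--\eqref{LMWF3} evaluated at $t_{n+1}$, expressing the time derivatives $\partial_t \bu_f$, $\partial_{tt} \bbeta_p$, $\partial_t p_p$, and $\partial_t \bbeta_p$ in the continuous equations as $d_t$, $d_{tt}$ via truncation with a Taylor remainder. Using the error splittings in \eqref{error splitting}, I move the approximation errors $\bchi_f$, $\bchi_p$, $\bchi_s$, $\chi_{fp}$, $\chi_{pp}$, $\chi_{\lambda}$ to the right-hand side and keep the discretization errors $\bbphi_{f,h}$, $\bbphi_{p,h}$, $\phi_{pp,h}$, $\bbphi_{s,h}$, $\phi_{fp,h}$, $\phi_{\lambda h}$ on the left. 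The interpolant properties \eqref{proj1}--\eqref{proj3} eliminate the coupling between approximation error components and the discrete Lagrange multiplier in $b_\Gamma$, as well as between $\bbV$-approximation errors and the discrete pressure error through $b_f$ and $b_p$. The $L^2$-projections \eqref{fluid-pressure-int}--\eqref{l-multuplier-int} similarly remove approximation-error contributions to the pressure and multiplier test terms.

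Next, in the error equations I choose test functions $(\bbv_{f,h}, w_{f,h}, \bbv_{p,h}, w_{p,h}, \bxi_{p,h}, \mu_h) = (\bbphi_{f,h}^{n+1}, \phi_{fp,h}^{n+1}, \bbphi_{p,h}^{n+1}, \phi_{pp,h}^{n+1}, d_t \bbphi_{s,h}^{n+1}, \phi_{\lambda h}^{n+1})$ and sum, generating the discrete analog of \eqref{energy-eq} for the discretization errors. Using the identity \eqref{discrete time analog} yields the telescoping energy structure in $\rho_f \|\bbphi_{f,h}\|^2$, $\rho_p \|d_t \bbphi_{s,h}\|^2$, $a_p^e(\bbphi_{s,h}, \bbphi_{s,h})$, $s_0 \|\phi_{pp,h}\|^2$, plus dissipation from $a_f$, $a_p^d$, and $a_{BJS}$. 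Most approximation-error terms on the right are then bounded by Cauchy-Schwarz and Young's inequality. The time truncation error is controlled through bounds on $\|\partial_{tt}\bu_f\|$, $\|\partial_{ttt}\bbeta_p\|$, $\|\partial_{tt}p_p\|$, which are available via \eqref{main1}--\eqref{main2} under the smoothness assumption, contributing the $O(\Delta t)$ term.

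The main obstacle will be the nonlinear convection term $(\rho_f \bu_f^{n+1} \cdot \grad \bu_f^{n+1}, \bbphi_{f,h}^{n+1}) - (\rho_f \bu_{f,h}^{n} \cdot \grad \bu_{f,h}^{n+1}, \bbphi_{f,h}^{n+1})$, which must be split by adding and subtracting intermediate terms to isolate (i) a factor $\bu_f^{n+1} - \bu_{f,h}^n = (\bu_f^{n+1} - \bu_f^n) + \MEE_f^n$ producing a time-difference contribution of order $\Delta t$ plus a spatial error term, and (ii) a factor $\bu_f^{n+1} - \bu_{f,h}^{n+1} = \MEE_f^{n+1}$. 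Each piece is then estimated by the Sobolev-Korn chain $\|\bu\|_{L^4} \le S_f K_f \|\D(\bu)\|$, exactly as in \eqref{nonlinear}--\eqref{nonlin2} and \eqref{higher-skew}. Crucially, the smallness bounds \eqref{main3} on $\|\D(\bu_f)\|$ and \eqref{stability-Duf} on $\|\D(\bu_{f,h})\|$, both of which are consequences of the small-data hypotheses \eqref{smalldata}, \eqref{dis-smalldata}, allow the resulting coefficients to be absorbed into the viscous dissipation $a_f(\bbphi_{f,h}^{n+1}, \bbphi_{f,h}^{n+1})$ after multiplication by a small constant. Residual Gronwall-type terms in $\|\bbphi_{f,h}\|^2$ and $\|d_t \bbphi_{s,h}\|^2$ are handled by the discrete Gronwall Lemma~\ref{disc-Gronwall}.

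Finally, the discrete inf-sup conditions \eqref{inf-sup-p-lambda-dis} and \eqref{inf-sup-pf-dis} recover the remaining pressure and multiplier error bounds: \eqref{inf-sup-p-lambda-dis} is applied to the Navier--Stokes-Darcy error equation (tested against $\bbv_{p,h}$) exactly as in \eqref{inf-sup-bound} to control $\|\phi_{pp,h}\|_{L^2(\Omega_p)} + \|\phi_{\lambda h}\|_{L^2(\Gamma_{fp})}$ by $\|K^{-1/2} \bbphi_{p,h}\|_{L^2(\Omega_p)}$ plus approximation terms, while \eqref{inf-sup-pf-dis} recovers $\|\phi_{fp,h}\|_{L^2(\Omega_f)}$ from the momentum error equation as in \eqref{pf-bound-h}. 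Substituting the approximation-error bounds \eqref{stokesPresProj}--\eqref{displ-bound} in the corresponding optimal regularity regime produces the exponent $\min\{k_f, s_f+1, k_p+1, s_p+1, k_s\}$, yielding \eqref{error-bound}.
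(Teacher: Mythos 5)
Your overall route is the same as the paper's: subtract the schemes, split the errors as in \eqref{error splitting}, test with $(\bbphi_{f,h}^{n+1},\phi_{fp,h}^{n+1},\bbphi_{p,h}^{n+1},\phi_{pp,h}^{n+1},d_t\bbphi_{s,h}^{n+1},\phi_{\lambda h}^{n+1})$, treat the convective difference exactly as you describe (the paper's splitting into $S(\bu_f^{n+1})\cdot\grad\bu_f^{n+1}+\bu_f^{n}\cdot\grad\MEE_f^{n+1}+\MEE_f^{n}\cdot\grad\bu_{f,h}^{n+1}$ absorbed via \eqref{main3}, \eqref{stability-Duf}), recover $\phi_{pp,h},\phi_{\lambda h}$ through \eqref{inf-sup-p-lambda-dis}, and finish with Lemma~\ref{disc-Gronwall}, Taylor's theorem, and the approximation estimates. (Your final step with \eqref{inf-sup-pf-dis} is superfluous: the theorem does not claim a $p_f$ error bound, and the paper defers it to a remark precisely because it needs estimates on $d_t$ of the errors.)

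There is, however, one genuine missing idea, and it is the main technical device of the paper's proof. After testing the elasticity/Biot rows with $\bxi_{p,h}=d_t\bbphi_{s,h}^{n+1}$, the right-hand side contains the approximation-error terms $a_p^e(\bchi_s^{n+1},d_t\bbphi_{s,h}^{n+1})$, $\alpha\, b_p(d_t\bbphi_{s,h}^{n+1},\chi_{pp}^{n+1})$, and the interface term $\langle d_t\bbphi_{s,h}^{n+1}\cdot\n_p,\chi_{\lambda}^{n+1}\rangle_{\Gamma_{fp}}$ (note that \eqref{l-multuplier-int} only kills the pairing of $\chi_\lambda$ with $\bbphi_{p,h}\cdot\n_p$, cf. \eqref{b-gamma-orth}; the $d_t\bbphi_{s,h}\cdot\n_p$ and $\bbphi_{f,h}\cdot\n_f$ parts survive). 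These cannot be handled by "Cauchy--Schwarz and Young," because absorbing them would require $\ell^2$-in-time dissipative control of $\|d_t\bbphi_{s,h}^{n+1}\|_{H^1(\Omega_p)}$ (or of its divergence and normal trace), and the error energy provides only $\|d_t\bbphi_{s,h}\|_{L^2(\Omega_p)}$ in $\ell^\infty$, the BJS seminorm, and the telescoped elastic energy $a_p^e(\bbphi_{s,h}^{M},\bbphi_{s,h}^{M})$; dividing by $\Delta t$ is of course not an option. The paper resolves this by discrete summation by parts in time, cf. \eqref{dis-IBP}: the discrete derivative is moved off $\bbphi_{s,h}$ onto the interpolation errors, producing endpoint terms $a_p^e(\bchi_s^{M},\bbphi_{s,h}^{M})$, $b_p(\bbphi_{s,h}^{M},\chi_{pp}^{M})$, $\langle\bbphi_{s,h}^{M}\cdot\n_p,\chi_\lambda^{M}\rangle$ plus sums involving $d_t\bchi_s$, $d_t\chi_{pp}$, $d_t\chi_\lambda$, which are then absorbed into $a_p^e(\bbphi_{s,h}^{M},\bbphi_{s,h}^{M})$ and handled with a Gronwall term $\frac{\Delta t}{2}\sum_n a_p^e(\bbphi_{s,h}^{n+1},\bbphi_{s,h}^{n+1})$. (An alternative fix would be to replace $I_{sh}$ and $Q_{ph}$ by projections making these pairings vanish, but that is not what you proposed.) Without this step, or an equivalent one, the estimate as you outlined it does not close; with it, the rest of your plan goes through exactly as in the paper.
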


\begin{proof}
Subtracting \eqref{fully1}--\eqref{fully2} from \eqref{LMWF1}--\eqref{LMWF2} at $t_{n+1}$, we obtain the error equations
\begin{align}
  &\rho_{f}(\partial_{t}\bu_{f}^{n+1} - d_t\bu_{f,h}^{n+1}, \bbv_{f,h})_{\Om_{f}}
  + a_f(\MEE_f^{n+1}, \bbv_{f,h})
  + \rho_{f}(\bu_{f}^{n+1}\cdot \grad \bu_{f}^{n+1} - \bu_{f,h}^{n}\cdot\grad\bu_{f,h}^{n+1},\bbv_{f,h})_{\Om_{f}}
  \nonumber\\[.3ex]
  &\quad 
  + b_f(\bbv_{f,h}, e_{fp}^{n+1})
  + \rho_{p}(\partial_{tt}\bbeta_{p}^{n+1} - d_{tt}\bbeta_{p,h}^{n+1}, \bxi_{p,h})_{\Om_{p}}  
  +a_p^e(\MEE_s^{n+1}, \bxi_{p,h})
  +\alpha b_p(\bxi_{p,h}, e_{pp}^{n+1})
  \nonumber\\[.3ex]
  &\quad
  + a_p^d(\MEE_p^{n+1}, \bbv_{p,h})
  + b_p(\bbv_{p,h},e_{pp}^{n+1})
  + b_{\Gamma}(\bbv_{f,h}, \bbv_{p,h},\bxi_{p,h}; e_{\lambda}^{n+1})
\nonumber\\[.3ex]
  &\quad
  +a_{BJS}(\MEE_f^{n+1}, \partial_{t}\bbeta_{p}^{n+1} - d_t \bbeta_{p,h}^{n+1}; \bbv_{f,h}, \bxi_{p,h}) = 0,  
  \label{err-eq-1}\\[.3ex]
  &
  s_0(\partial_{t}p_p^{n+1} - d_t p_{p,h}^{n+1}, w_{p,h})_{\Om_{p}}
-\alpha b_p(\partial_{t}\bbeta_{p}^{n+1} - d_t \bbeta_{p,h}^{n+1}, w_{p,h}) \nonumber \\
  & \quad - b_p(\MEE_p^{n+1}, w_{p,h}) - b_f(\MEE_f^{n+1},w_{f,h}) = 0. \label{err-eq-2}
\end{align}
Taking $\bbv_{f,h}=\bbphi_{f,h}^{n+1}$, $w_{f,h}=\phi_{fp,h}^{n+1}$, $\bbv_{p,h}=\bbphi_{p,h}^{n+1}$, $w_{p,h}=\phi_{pp,h}^{n+1}$, and $\bxi_{p,h} = d_t \bbphi_{s,h}^{n+1}$ and summing the equations, we obtain
\begin{align}
  & \rho_f(d_t\bbphi_{f,h}^{n+1},\bbphi_{f,h}^{n+1})_{\Om_{f}}
  + a_f(\bbphi_{f,h}^{n+1},\bbphi_{f,h}^{n+1})
  + \rho_{p}(d_{tt}\bbphi_{s,h}^{n+1},d_t\bbphi_{s,h}^{n+1})_{\Om_{p}}
  + a_{p}^{e}(\bbphi_{s,h}^{n+1},d_t\bbphi_{s,h}^{n+1})
  \nonumber \\[.3ex]
  & \qquad
  + a_p^d(\bbphi_{p,h}^{n+1},\bbphi_{p,h}^{n+1})
  + |\bbphi_{f,h}^{n+1} - d_t\bbphi_{s,h}^{n+1}|^2_{a_{BJS}}
  + s_0(d_t\phi_{pp,h}^{n+1},\phi_{pp,h}^{n+1})_{\Om_{p}}
  \nonumber\\[.3ex]
  & \quad =
  - \rho_f(\partial_{t}{\bu}_f^{n+1} - d_t (I_{fh}\bu_{f})^{n+1},\bbphi_{f,h}^{n+1})_{\Om_{f}}
  - a_f(\bchi_f^{n+1},\bbphi_{f,h}^{n+1})
  \nonumber \\[.3ex]
  & \qquad
  - \rho_{f}(\bu_{f}^{n+1}\cdot \grad \bu_{f}^{n+1} - \bu_{f,h}^{n}\cdot \grad \bu_{f,h}^{n+1},\bbphi_{f,h}^{n+1})_{\Om_{f}}
  -b_f(\bbphi_{f,h}^{n+1},\chi_{fp}^{n+1})
  \nonumber \\[.3ex]
  & \qquad  
  - \rho_{p}(\partial_{tt}{\bbeta}_{p}^{n+1} - d_{tt} (I_{sh}\bbeta_{p})^{n+1},d_t\bbphi_{s,h}^{n+1})_{\Om_{p}}
  - a_{p}^{e}(\bchi_{s}^{n+1},d_t \bbphi_{s,h}^{n+1})
  - \alpha b_p(d_t\bbphi_{s,h}^{n+1},\chi_{pp}^{n+1})
  \nonumber \\[.3ex]
  & \qquad
  -a_p^d(\bchi_p^{n+1},\bbphi_{p,h}^{n+1})
  - b_p(\bbphi_{p,h}^{n+1},\chi_{pp}^{n+1})
  - {b_{\Gamma}(\bbphi_{f,h}^{n+1},\bbphi_{p,h}^{n+1},d_t \bbphi_{s,h}^{n+1};\chi_{\lambda}^{n+1})}
  \nonumber \\[.3ex]
  & \qquad
  - b_{\Gamma}(\bbphi_{f,h}^{n+1},\bbphi_{p,h}^{n+1},d_t\bbphi_{s,h}^{n+1};\phi_{\lambda,h}^{n+1})  
  -a_{BJS}(\bchi_{f}^{n+1},\partial_{t}{\bbeta}_{p}^{n+1}
  - d_t (I_{sh}\bbeta_{p})^{n+1};\bbphi_{f,h}^{n+1},d_t\bbphi_{s,h}^{n+1})
  \nonumber \\[.3ex]
  &\qquad
  -s_0(\partial_{t}{p}_{p}^{n+1} - d_t (Q_{ph}p_p)^{n+1},\phi_{pp,h}^{n+1})_{\Om_{p}}
  + \alpha b_p(\partial_{t}{\bbeta}_{p}^{n+1} - d_t (I_{sh}\bbeta_{p})^{n+1},\phi_{pp,h}^{n+1})
  \nonumber\\[.3ex]
  &\qquad
  + b_p(\bchi_p^{n+1},\phi_{pp,h}^{n+1})
+b_f(\bchi_{f}^{n+1},\phi_{fp,h}^{n+1}).  
\label{error-split-ori}
\end{align}
Using \eqref{proj2}, \eqref{proj3}, and \eqref{darcy-pressure-int}, respectively, we have
\begin{equation}\label{b-forms-0}
b_f(\bchi_{f}^{n+1},\phi_{fp,h}^{n+1}) = 0, \quad b_p(\bchi_p^{n+1}, \phi_{pp,h}^{n+1}) = 0, \quad b_p(\bbphi_{p,h}^{n+1},\chi_{pp}) = 0.
\end{equation}
Similarly, due to \eqref{l-multuplier-int} and $\Lambda_h=\bbV_{p,h}\cdot \n_p|_{\Gamma_{fp}}$, it holds that
\begin{equation}\label{b-gamma-orth}
b_{\Gamma}(\bbphi_{f,h}^{n+1},\bbphi_{p,h}^{n+1},d_t \bbphi_{s,h}^{n+1};\chi_{\lambda}^{n+1}) = \big\langle\bbphi_{f,h}^{n+1}\cdot \n_f + d_t \bbphi_{s,h}^{n+1}\cdot\n_p, \chi_{\lambda}^{n+1}\big\rangle_{\Gamma_{fp}}.
\end{equation}
In addition, \eqref{fully3} and \eqref{proj1} imply 
\begin{equation}\label{b-gamma-0}
b_{\Gamma}(\bbphi_{f,h}^{n+1},\bbphi_{p,h}^{n+1},d_t\bbphi_{s,h}^{n+1};\phi_{\lambda,h}^{n+1}) = 0.
\end{equation}
To simplify the notation, we introduce the time discretization and splitting error operators
$$
T_1(u^{n+1}) = \d_t u^{n+1} - d_t u^{n+1}, \quad T_2(u^{n+1}) = \d_{tt} u^{n+1} - d_{tt} u^{n+1}, \quad
S(u^{n+1}) = u^{n+1} - u^n.
$$
Then, for the time discretization error terms on the right hand side of \eqref{error-split-ori} we write
\begin{align*}
& \partial_{t}{\bu}_f^{n+1} - d_t (I_{fh}\bu_{f})^{n+1} = T_1(\bu_f^{n+1}) + d_t \bchi_f^{n+1}, \\
  & \partial_{tt}{\bbeta}_{p}^{n+1} - d_{tt} (I_{sh}\bbeta_{p})^{n+1} = T_2(\bbeta_{p}^{n+1}) + d_{tt} \bchi_s^{n+1}, \\
  & \partial_{t}{\bbeta}_{p}^{n+1} - d_{t} (I_{sh}\bbeta_{p})^{n+1} = T_1(\bbeta_{p}^{n+1}) + d_{t} \bchi_s^{n+1}, \\
  & \partial_{t}{p}_{p}^{n+1} - d_t (Q_{ph}p_p)^{n+1} = T_1(p_{p}^{n+1}) + d_t \chi_{pp}^{n+1}.
\end{align*}
We next bound the terms on the right hand side of \eqref{error-split-ori}. For the terms containing $\bbphi_{f,h}^{n+1}$, using the Cauchy-Schwartz and Young's inequalities, we write
\begin{align}
  & - \rho_f(\partial_{t}{\bu}_f^{n+1} - d_t (I_{fh}\bu_{f})^{n+1},\bbphi_{f,h}^{n+1})_{\Om_{f}}
  -a_f(\bchi_f^{n+1},\bbphi_{f,h}^{n+1}) - b_f(\bbphi_{f,h}^{n+1},\chi_{fp}^{n+1})
  -\langle\bbphi_{f,h}^{n+1}\cdot\n_f,\chi_{\lambda}^{n+1}\rangle_{\Gamma_{fp}}\nonumber\\
  & \quad \le \frac{\rho_f}{4}\|\bbphi_{f,h}^{n+1}\|^2_{L^2(\Om_{f})} + 
  \frac{\mu_f}{4}\|\D(\bbphi_{f,h}^{n+1})\|^2_{L^2(\Om_{f})}
  + C\Big(\|T_1(\bu_f^{n+1})\|^2_{L^2(\Om_{f})} + \|d_t \bchi_f^{n+1}\|^2_{L^2(\Om_{f})}
  \nonumber \\
  & \qquad\quad
  + \|\bchi_f^{n+1}\|^2_{H^1(\Om_{f})} + \|\chi_{fp}^{n+1}\|^2_{L^2(\Om_{f})}
  +\|\chi_{\lambda}^{n+1}\|^2_{L^2(\Gamma_{fp})}\Big). \label{phi-f-terms}
\end{align}
For the nonlinear error term we have
\begin{align*}
& \bu_{f}^{n+1}\cdot \grad \bu_{f}^{n+1} - \bu_{f,h}^{n}\cdot \grad \bu_{f,h}^{n+1}
= S(\bu_{f}^{n+1})\cdot \grad \bu_{f}^{n+1}  + \bu_{f}^{n}\cdot \grad \MEE_f^{n+1} + \MEE_f^{n}\cdot \grad\bu_{f,h}^{n+1} \\
& \qquad = S(\bu_{f}^{n+1})\cdot \grad \bu_{f}^{n+1}
+ \bu_{f}^{n}\cdot \grad(\bchi_f^{n+1} + \bbphi_{f,h}^{n+1})
+ (\bchi_f^{n} + \bbphi_{f,h}^{n}) \cdot \grad\bu_{f,h}^{n+1}.
\end{align*}
which implies, using \eqref{stability-Duf} and \eqref{dis-Duf}
\begin{align}
  &-\rho_{f}(\bu_{f}^{n+1}\cdot \grad \bu_{f}^{n+1}-\bu_{f,h}^{n}\cdot \grad \bu_{f,h}^{n+1},\bbphi_{f,h}^{n+1})_{\Om_{f}}
  \leq \rho_{f}\|S(\bu_{f}^{n+1})\|_{L^4(\Om_{f})}
  \|\grad \bu_{f}^{n+1}\|_{L^2(\Om_{f})}\|\bbphi_{f,h}^{n+1}\|_{L^4(\Om_{f})}
  \nonumber\\
  & \qquad
  + \rho_{f}\|\bu_{f}^{n}\|_{L^4(\Om_{f})}\|\grad \bchi_{f}^{n+1}\|_{L^2(\Om_{f})}\|\bbphi_{f,h}^{n+1}\|_{L^4(\Om_{f})}
  +\rho_{f}\|\bu_{f}^{n}\|_{L^4(\Om_{f})}\|\grad \bbphi_{f,h}^{n+1}\|_{L^2(\Om_{f})}\|\bbphi_{f,h}^{n+1}\|_{L^4(\Om_{f})}
  \nonumber\\
  &\qquad
  +\rho_{f}\|\bchi_{f}^{n}\|_{L^4(\Om_{f})}\|\grad \bu_{f,h}^{n+1}\|_{L^2(\Om_{f})}\|\bbphi_{f,h}^{n+1}\|_{L^4(\Om_{f})}
  +\rho_{f}\|\bbphi_{f}^{n}\|_{L^4(\Om_{f})}\|\grad \bu_{f,h}^{n+1}\|_{L^2(\Om_{f})}\|\bbphi_{f,h}^{n+1}\|_{L^4(\Om_{f})}
  \nonumber\\
  & \quad
  \le \frac{\mu_f}{2} \|S(\D(\bu_{f}^{n+1}))\|_{L^2(\Om_f)}\|\D(\bbphi_{f,h}^{n+1})\|_{L^2(\Om_f)}
  + \frac{\mu_f}{2} \|\D(\bchi_{f}^{n+1})\|_{L^2(\Om_f)}\|\D(\bbphi_{f,h}^{n+1})\|_{L^2(\Om_f)} \nonumber \\
  & \qquad
  + \frac{\mu_f}{2} \|\D(\bbphi_{f,h}^{n+1})\|_{L^2(\Om_f)}^2
  + \frac{\mu_f}{2} \|\D(\bchi_{f}^{n})\|_{L^2(\Om_f)}\|\D(\bbphi_{f,h}^{n+1})\|_{L^2(\Om_f)} \nonumber \\
  & \qquad
  + \frac{\mu_f}{2} \|\D(\bbphi_{f,h}^{n})\|_{L^2(\Om_f)}\|\D(\bbphi_{f,h}^{n+1})\|_{L^2(\Om_f)}
\nonumber\\
&\quad  \leq \mu_f\|\D(\bbphi_{f,h}^{n+1})\|^2_{L^2(\Om_f)}
+ \frac{\mu_f}{4}\|\D(\bbphi_{f,h}^{n})\|_{L^2(\Om_f)}^2 \nonumber \\
& \qquad + \frac{3\mu_f}{4}\left(\|S(\D(\bu_{f}^{n+1}))\|_{L^2(\Om_f)}^2 + \|\D(\bchi_{f}^{n+1})\|_{L^2(\Om_f)}^2
+ \|\D(\bchi_{f}^{n})\|_{L^2(\Om_f)}^2\right). \label{nonlin-err}
\end{align}
We note that combining the terms involving $\|\D(\bbphi_{f,h}^{n+1})\|_{L^2(\Om_f)}^2$ and $\|\D(\bbphi_{f,h}^{n})\|_{L^2(\Om_f)}^2$ on the right hand sides of \eqref{phi-f-terms} and \eqref{nonlin-err} with the term $a_f(\bbphi_{f,h}^{n+1},\bbphi_{f,h}^{n+1})$ on the left hand side of \eqref{error-split-ori} gives
$$
\frac{\mu_f}{2}\|\D(\bbphi_{f,h}^{n+1})\|_{L^2(\Om_f)}^2 +  \frac{\mu_f}{4}\big(\|\D(\bbphi_{f,h}^{n+1})\|_{L^2(\Om_f)}^2 - \|\D(\bbphi_{f,h}^{n})\|_{L^2(\Om_f)}^2\big).
$$
For the BJS term on the right hand side of \eqref{error-split-ori} we write
\begin{align}
& -a_{BJS}(\bchi_{f}^{n+1},\partial_{t}{\bbeta}_{p}^{n+1}
- d_t (I_{sh}\bbeta_{p})^{n+1};\bbphi_{f,h}^{n+1},d_t\bbphi_{s,h}^{n+1}) \nonumber \\
& \qquad = - a_{BJS}(\bchi_{f}^{n+1},T_1(\bbeta_{p}^{n+1})
+ d_{t} \bchi_s^{n+1};\bbphi_{f,h}^{n+1},d_t\bbphi_{s,h}^{n+1})
\nonumber \\
& \qquad
\le \frac12\big|\bbphi_{f,h}^{n+1} - d_t\bbphi_{s,h}^{n+1} \big|^2_{a_{BJS}}
+ C\big(\|\bchi_{f}^{n+1}\|_{H^1(\Omega_f)}^2 + \|T_1(\bbeta_{p}^{n+1})\|_{H^1(\Omega_p)}^2
+ \|d_{t} \bchi_s^{n+1}\|_{H^1(\Omega_p)}^2.
\label{bjs-err}
\end{align}
Next, consider the terms involving $d_t\bbphi_{s,h}^{n+1}$ on the right hand side of \eqref{error-split-ori}. We have
\begin{align}
&  - \rho_{p}(\partial_{tt}{\bbeta}_{p}^{n+1} - d_{tt} (I_{sh}\bbeta_{p})^{n+1},d_t\bbphi_{s,h}^{n+1})_{\Om_{p}}
  = - \rho_p(T_2(\bbeta_{p}^{n+1}) + d_{tt} \bchi_s^{n+1},d_t\bbphi_{s,h}^{n+1})_{\Om_{p}} \nonumber \\
  & \qquad \le \frac{\rho_p}{4} \|d_t\bbphi_{s,h}^{n+1}\|_{L^2(\Omega_p)}^2
  + C\big(\|T_2(\bbeta_{p}^{n+1})\|_{L^2(\Omega_p)}^2 + \|d_{tt} \bchi_s^{n+1}\|_{L^2(\Omega_p)}^2\big).
  \label{dtt-eta-err}
\end{align}
The other terms involving $d_t\bbphi_{s,h}^{n+1}$ will be handled by summation by parts in time.

Next, there is only one remaining term on the right hand side of \eqref{error-split-ori} involving $\bbphi_{p,h}^{n+1}$, which is bounded as
\begin{equation}\label{up-err}
  -a_p^d(\bchi_p^{n+1},\bbphi_{p,h}^{n+1}) \le \frac12\|K^{-1/2}\bbphi_{p,h}^{n+1}\|_{L^2(\Omega_p)}^2
  + \frac12\|K^{-1/2}\bchi_p^{n+1}\|_{L^2(\Omega_p)}^2.
\end{equation}
We proceed with bounding the terms on the right hand side of \eqref{error-split-ori} involving $\phi_{pp,h}^{n+1}$: 
\begin{align}
& -s_0(\partial_{t}{p}_{p}^{n+1} - d_t (Q_{ph}p_p)^{n+1},\phi_{pp,h}^{n+1})_{\Om_{p}}
  + \alpha b_p(\partial_{t}{\bbeta}_{p}^{n+1} - d_t (I_{sh}\bbeta_{p})^{n+1},\phi_{pp,h}^{n+1})  \nonumber \\
  & \quad = -s_0(T_1(p_{p}^{n+1}) + d_t \chi_{pp}^{n+1},\phi_{pp,h}^{n+1})_{\Om_{p}}
  + \alpha b_p(T_1(\bbeta_{p}^{n+1}) + d_{t} \bchi_s^{n+1},\phi_{pp,h}^{n+1}) \nonumber \\
  & \quad \le \frac{\beta_p^2 k_{min}}{8}\|\phi_{pp,h}^{n+1}\|_{L^2(\Omega_p)}^2 + C \big(\|T_1(p_{p}^{n+1})\|_{L^2(\Omega_p)}^2
  + \|d_t \chi_{pp}^{n+1}\|_{L^2(\Omega_p)}^2 \nonumber \\
  & \qquad\quad
  + \|T_1(\bbeta_{p}^{n+1})\|_{H^1(\Omega_p)}^2
  + \|d_{t} \bchi_s^{n+1}\|_{H^1(\Omega_p)}^2 \big). \label{pp-err}
\end{align}

Next, we combine \eqref{error-split-ori}--\eqref{pp-err}, sum over $0 \le n \le M-1$ for any $1 \le M \le N$, and multiply by $2\Delta t$, obtaining
\begin{align}
  &\frac{\rho_{f}}{2}\|\bbphi_{f,h}^{M}\|^2_{L^2(\Om_{f})}
  + \mu_f\Delta t \sum_{n=0}^{M-1}\|\D(\bbphi_{f,h}^{n+1})\|^2_{L^2(\Om_{f})}
  + \frac{\rho_p}{2} \|d_t\bbphi_{s,h}^{M}\|^2_{L^2(\Om_{p})}
  + a_{p}^{e}(\bbphi_{s,h}^{M},\bbphi_{s,h}^{M})\nonumber\\
  & \qquad 
  + \Delta t \sum_{n=0}^{M-1}\|K^{-1/2}\bbphi_{p,h}^{n+1}\|^2_{L^2(\Om_{p})}
  + \Delta t\sum_{n=0}^{M-1}|\bbphi_{f,h}^{n+1}-d_t\bbphi_{s,h}^{n+1}|^2_{a_{BJS}}
  +s_0\|\phi_{pp,h}^{M}\|^2_{L^2(\Om_{p})}  
  \nonumber\\
  & \quad
  \leq C\Delta t \sum_{n=0}^{M-1}\Big(\|T_1(\bu_f^{n+1})\|^2_{L^2(\Om_{f})}
  + \|d_t \bchi_f^{n+1}\|^2_{L^2(\Om_{f})} + \|\bchi_f^{n+1}\|^2_{H^1(\Om_{f})}
  + \|\chi_{fp}^{n+1}\|^2_{L^2(\Om_{f})}
\nonumber\\
& \qquad
+ \|\chi_{\lambda}^{n+1}\|^2_{L^2(\Gamma_{fp})}
+ \|S(\bu_{f}^{n+1})\|_{H^1(\Om_f)}^2
+ \|T_1(\bbeta_{p}^{n+1})\|_{H^1(\Omega_p)}^2
+ \|d_{t} \bchi_s^{n+1}\|_{H^1(\Omega_p)}^2 \nonumber\\[.3ex]
& \qquad
+ \|T_2(\bbeta_{p}^{n+1})\|_{L^2(\Omega_p)}^2 + \|d_{tt} \bchi_s^{n+1}\|_{L^2(\Omega_p)}^2
+ \|\bchi_p^{n+1}\|_{L^2(\Omega_p)}^2 + \|T_1(p_{p}^{n+1})\|_{L^2(\Omega_p)}^2
  + \|d_t \chi_{pp}^{n+1}\|_{L^2(\Omega_p)}^2 \big)
\nonumber\\
&\qquad
+ \frac{\Delta t \beta_p^2 k_{min}}{4}\sum_{n=0}^{M-1}\|\phi_{pp,h}^{n+1}\|_{L^2(\Omega_p)}^2
+ \Delta t \sum_{n=0}^{M-2} \left(\frac{\rho_f}{2} \|\bbphi_{f,h}^{n+1}\|^2_{L^2(\Om_{f})}
+ \frac{\rho_p}{2}\|d_t\bbphi_{s,h}^{n+1}\|_{L^2(\Omega_p)}^2\right)
\nonumber\\
&\qquad
- 2\Delta t \sum_{n=0}^{M-1}\Big(a_p^e(\bchi_s^{n+1},d_t\bbphi_{s,h}^{n+1})+\alpha b_p(d_t\bbphi_{s,h}^{n+1}, \chi_{pp}^{n+1}) +\big\langle d_t\bbphi_{s,h}^{n+1}\cdot \n_p,\chi_{\lambda}^{n+1} \big\rangle_{\Gamma_{fp}}\Big).\label{find-it}
\end{align}
The last three terms are handled using summation by parts. For the first term  we write
\begin{align*}
  \Delta t \sum_{n=0}^{M-1}a_p^e(\bchi_s^{n+1},d_t\bbphi_{s,h}^{n+1})=\sum_{n=0}^{M-1}a_p^e(\bchi_s^{n+1},\bbphi_{s,h}^{n+1}-\bbphi_{s,h}^{n})
  =a_p^e(\bchi_s^{N}, \bbphi_{s,h}^{N})-\Delta t \sum_{n=0}^{M-1}a_p^e(d_t\bchi_s^{n+1},\bbphi_{s,h}^{n}),
\end{align*}
where we used that $\bbphi_{s,h}^{0}=0$. Using similar expressions for the other two terms, we obtain
\begin{align}\label{dis-IBP}
&-2\Delta t \sum_{n=0}^{M-1}\Big(a_p^e(\bchi_s^{n+1},d_t\bbphi_{s,h}^{n+1})+\alpha b_p(d_t\bbphi_{s,h}^{n+1}, \chi_{pp}^{n+1})+\big\langle d_t\bbphi_{s,h}^{n+1}\cdot \n_p,\chi_{\lambda}^{n+1} \big\rangle_{\Gamma_{fp}}\Big) \nonumber\\
  & \quad
  = -2\big(a_p^e(\bchi_s^{M}, \bbphi_{s,h}^{M})+\alpha b_p(\bbphi_{s,h}^{M}, \chi_{pp}^{M})+\left\langle \bbphi_{s,h}^{M}\cdot \n_p,\chi_{\lambda}^{M} \right\rangle_{\Gamma_{fp}}\big)\nonumber\\
&\qquad +2\Delta t \sum_{n=0}^{M-1}\big(a_p^e(d_t\bchi_s^{n+1},\bbphi_{s,h}^{n})+\alpha b_p(\bbphi_{s,h}^{n}, d_t\chi_{pp}^{n+1})+\left\langle \bbphi_{s,h}^{n}\cdot \n_p,d_t\chi_{\lambda}^{n+1} \right\rangle_{\Gamma_{fp}}\big)\nonumber\\
  & \quad \leq \frac{1}{2}a_p^e(\bbphi_{s,h}^{M}, \bbphi_{s,h}^{M})
  +\frac{\Delta t}{2} \sum_{n=0}^{M-2}a_p^e(\bbphi_{s,h}^{n+1},\bbphi_{s,h}^{n+1})
  + C\big(\|\bchi_{s}^{M}\|^2_{H^1(\Om_{p})}+\|\chi_{pp}^{M}\|^2_{L^2(\Om_{p})}
  +\|\chi_{\lambda}^{M}\|^2_{L^2(\Gamma_{fp})}\big)
  \nonumber\\
  &\qquad
  + C\Delta t \sum_{n=0}^{M-1}\Big(\|d_t\bchi_{s}^{n+1}\|^2_{H^1(\Om_{p})}+\|d_t\chi_{pp}^{n+1}\|^2_{L^2(\Om_{p})}
  +\|d_t \chi_{\lambda}^{n+1}\|^2_{L^2(\Gamma_{fp})}\Big).
\end{align}

Finally, we utilize the inf-sup condition \eqref{inf-sup-p-lambda-dis} and \eqref{err-eq-1} to obtain
\begin{align*}
 & \beta_{p}(\|\phi_{pp,h}^{n+1}\|_{L^2(\Om_{p})} + \|\phi^{n+1}_{\lambda h}\|_{L^2(\Gamma_{fp})})
 \leq\sup_{0 \ne \bbv_{p,h} \in \bbV_{p,h}}\frac{b_p(\bbv_{p,h}, \phi_{pp,h}^{n+1}) + \big\langle\bbv_{p,h}\cdot \n_{p}, \phi^{n+1}_{\lambda,h}\big\rangle_{\Gamma_{fp}}}{\|\bbv_{p,h}\|_{H({\rm div}; \Om_{p})}} \nonumber\\
& \qquad =\sup_{0 \ne \bbv_{p,h} \in \bbV_{p,h}}\frac{-a_p^d(\MEE_p^{n+1},\bbv_{p,h})-b_p(\bbv_{p,h}, \chi_{pp}^{n+1})-\left\langle\bbv_{p,h}\cdot \n_{p}, \chi^{n+1}_{\lambda}\right\rangle_{\Gamma_{fp}}}{\|\bbv_{p,h}\|_{H({\rm div}; \Om_{p})}}.
\end{align*}
Noting that $b_p(\bbv_{p,h}, \chi_{pp}^{n+1})=0$ and $\left\langle\bbv_{p,h}\cdot \n_{p}, \chi^{n+1}_{\lambda}\right\rangle_{\Gamma_{fp}}=0$, we have
\begin{equation}\label{inf-sup-err}
  \frac{\beta_{p}^2 k_{min}}{4}\big(\|\phi_{pp,h}^{n+1}\|_{L^2(\Om_{p})}^2 + \|\phi^{n+1}_{\lambda h}\|^2_{L^2(\Gamma_{fp})}\big)
  \leq \frac12\|K^{-1/2}\bbphi_{p,h}^{n+1}\|^2_{L^2(\Om_{p})} + \frac12\|K^{-1/2}\bchi_p^{n+1}\|^2_{L^2(\Om_{p})}.
\end{equation}
The assertion of the theorem \eqref{error-bound} follows from combining \eqref{find-it}--\eqref{inf-sup-err}, applying the discrete Gronwall inequality from Lemma~\ref{disc-Gronwall} for the terms
$\Delta t \sum_{n=0}^{M-2} \big(\frac{\rho_f}{2} \|\bbphi_{f,h}^{n+1}\|^2_{L^2(\Om_{f})}
+ \frac{\rho_p}{2}\|d_t\bbphi_{s,h}^{n+1}\|_{L^2(\Omega_p)}^2\big)$ and 
$\frac{\Delta t}{2} \sum_{n=0}^{M-2}a_p^e(\bbphi_{s,h}^{n+1},\bbphi_{s,h}^{n+1})$, and using the approximation properties \eqref{stokesPresProj}--\eqref{LMProj} and \eqref{eq:stokes-like-approx-prop}--\eqref{displ-bound}, as well as the bound on the time discretization terms
\begin{align*}
& \Delta t \sum_{n=0}^{M-1}\Big(\|T_1(\bu_f^{n+1})\|^2_{L^2(\Om_{f})}
+ \|S(\bu_{f}^{n+1})\|_{H^1(\Om_f)}^2
+ \|T_1(\bbeta_{p}^{n+1})\|_{H^1(\Omega_p)}^2 \nonumber \\
& \qquad\quad
+ \|T_2(\bbeta_{p}^{n+1})\|_{L^2(\Omega_p)}^2 + \|T_1(p_{p}^{n+1})\|_{L^2(\Omega_p)}^2\Big) \le C \Delta t^2,
\end{align*}
which follows easily from an application of Taylor's theorem.
\end{proof}

\begin{remark}
It follows from the inf-sup condition \eqref{inf-sup-pf-dis} and the error equation \eqref{err-eq-1} that
\begin{align*}
  & \|p_f^{n+1}-p_{f,h}^{n+1}\|_{L^2(\Om_{f})} \leq C \big(\|d_t\bu_{f}^{n+1} - d_t\bu_{f,h}^{n+1}\|_{L^2(\Om_{f})} + \|\bu_{f}^{n+1} -\bu_{f,h}^{n+1}\|_{H^1(\Om_{f})}
  \nonumber\\
  & \quad
  +\|\lambda^{n+1} -\lambda_{h}^{n+1}\|_{L^2(\Gamma_{fp})}
  + \|d_t\bbeta_{p}^{n+1} - d_t\bbeta_{p,h}^{n+1}\|_{H^1(\Om_{p})}
  + \|T_1(\bu_f^{n+1})\|^2_{L^2(\Om_{f})}
  \nonumber \\
  & \quad
  + \|d_t \bchi_f^{n+1}\|^2_{L^2(\Om_{f})}
  + \|T_1(\bbeta_{p}^{n+1})\|_{H^1(\Omega_p)}^2 + \|d_{t} \bchi_s^{n+1}\|_{H^1(\Omega_p)}^2
  + \|\chi_{fp}^{n+1}\|^2_{L^2(\Om_{f})}\big).
\end{align*}
Similarly, the error equation \eqref{err-eq-2} implies
\begin{align*}
  \|\grad \cdot(\bu_{p}^{n+1} -\bu_{p,h}^{n+1})\|_{L^2(\Om_{p})}\leq C\big(
  \|d_t\bbeta_{p}^{n+1} -d_t\bbeta_{p,h}^{n+1}\|_{H^1(\Om_{p})}
  + \|d_tp_{p}^{n+1} -d_tp_{p,h}^{n+1}\|_{L^2(\Om_{p})}
  \big).
\end{align*}
Therefore, bounds on $\|p_f - p_{f,h}\|_{l^2(0,T;L^2(\Om_{f}))}$ and
$\|\grad \cdot(\bu_{p}-\bu_{p,h})\|_{l^2(0, T;L^2(\Om_{p}))}$ can be obtained after deriving an estimate on
$\|d_t\bu_{f} - d_t\bu_{f,h}\|_{l^2(0, T;L^2(\Om_{f}))}$ and $\|d_tp_{p}-d_tp_{p,h}\|_{l^2(0, T;L^2(\Om_{p}))}$, which can be done following the argument for \eqref{stability2}. We omit the details for the sake of space.
\end{remark}

\section{Numerical results}\label{section6}

In this section we present the results of two numerical experiments to illustrate the behavior of the fully discrete finite element method \eqref{fully1}--\eqref{fully3} in 2 dimensions. The method has been implemented by using the finite element package FreeFem++ \cite{freefem}. In Section~\ref{section6.1} we present a convergence test. In section \ref{section6.2} we simulate a prototype problem arising from cardiovascular flow modeling.

\begin{figure}
  \includegraphics[trim=0 0 0 40,scale=0.43]{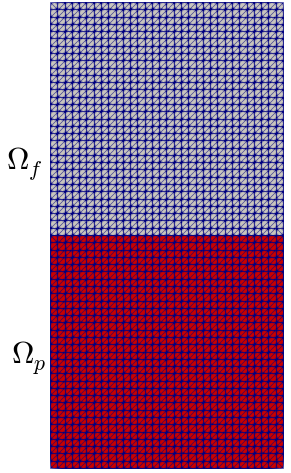}
    \qquad 
    \includegraphics[trim=0 0 0 40,scale=0.54]{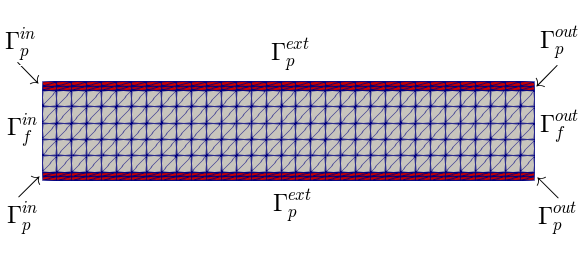}
\centering
\caption{Computational domains for Example 1 (left) and Example 2 (right). }
\label{mesh11}
\end{figure}

\subsection{Example 1: convergence test}\label{section6.1}

In solve a problem with a known analytical solution in order to verify the theoretical convergence rates. The domain is $\Omega = (0,1)\times(-1,1)$.  We associate the upper
half with the Navier-Stokes flow, while the lower half represents the flow in
the poroelastic region governed by the Biot system, see Figure~\ref{mesh11} (left). The solution in the Navier-Stokes region is
%
\begin{align*}
&\bu_f = \pi\cos(\pi t)\begin{pmatrix}-3x+\cos(y) \\ 
y+1 \end{pmatrix}, \quad p_f = e^t\sin(\pi x)\cos(\frac{\pi y}{2}) + 2\pi \cos(\pi t).
\end{align*}
The solution in the Biot region is 
\begin{align*}
&\bu_p = \pi e^t \begin{pmatrix} -\cos(\pi x)\cos(\frac{\pi y}{2}) \\ \frac12\sin(\pi x)\sin(\frac{\pi y}{2}) \end{pmatrix}, \quad p_p = e^t\sin(\pi x)\cos(\frac{\pi y}{2}), \quad \bbeta_p = \sin(\pi t) \begin{pmatrix}-3x+\cos(y) \\ y+1 \end{pmatrix}.
\end{align*}
%
The solution satisfies the interface conditions \eqref{eq:mass-conservation}--\eqref{Gamma-fp-1} along the interface $y = 0$.
The right hand side functions $\f_f,\, q_f,\, \f_p$ and $q_p$ are
computed from \eqref{stokes1}--\eqref{Biot3} using the above
solution. The model is complemented with Dirichlet boundary conditions and initial data obtained from the given solution. We study the spatial convergence for two choices of finite element spaces. The first choice is the MINI elements
$\mathcal{P}_1^b - \mathcal{P}_1$ for Navier-Stokes, where $\mathcal{P}_1^b$ stands for $\mathcal{P}_1$ stabilized with a cubic bubble function,
the Raviart-Thomas pair
$\mathcal{RT}_0-\mathcal{P}_0$ for the Darcy velocity and pressure,
continuous
$\mathcal{P}_1$ elements for the displacement,
and $\mathcal{P}_0$ Lagrange multiplier. In this case, with $k_f = 1$, $s_f = 1$, $k_p = 0$, $s_p = 0$, and $k_s = 1$,
Theorem~\ref{error-theorem} predicts first order of convergence for all variables.  The total simulation time for this case is $T=0.1$ and the time step is $\Delta t =
2.5\times10^{-4}$. The second, higher order, choice is the Taylor-Hood
$\mathcal{P}_2-\mathcal{P}_1$ elements for Navier-Stokes,
the Raviart-Thomas pair $\mathcal{RT}_1-\mathcal{P}_1^{dc}$ for the Darcy velocity and pressure,
continuous $\mathcal{P}_2$ for the displacement,
and $\mathcal{P}^{dc}_1$ for the Lagrange multiplier. With $k_f = 2$, $s_f = 1$, $k_p = 1$, $s_p = 1$, and $k_s = 2$, second order convergence is expected for all
variables. The total simulation
time for this case is $T=5\times10^{-4}$ and the time step is $\Delta t =
1\times10^{-6}$. In both cases we choose the time step sufficiently small, so that the time discretization error does not affect the spatial convergence rates. These theoretical results are verified by the rates shown in the Tables~\ref{T1} and \ref{T3}.

\def\arraystretch{1.1}
\begin{table}[ht!]
  \caption{Example 1: relative numerical errors and convergence rates with the lower order spaces.}
	\begin{center}
		\begin{tabular}{c|cc|cc|cc|cc}
			\hline
			\multicolumn{9}{c}{$\mathcal{P}_1^b - \mathcal{P}_1,\, \mathcal{RT}_0-\mathcal{P}_0,\, \mathcal{P}_1, \, \mathcal{P}_0$} \\
			\hline
			& \multicolumn{2}{c|}{$\|\e_{f}\|_{l^{2}(H^1(\Om_f))}$} & \multicolumn{2}{c|}{$\|e_{fp}\|_{l^{2}(L^2(\Om_f))} $} & \multicolumn{2}{c|}{$\|\e_{p}\|_{l^{2}(L^2(\Om_p))}$}  
			&	\multicolumn{2}{c}{$\|\div \e_{p}\|_{l^{2}(L^2(\Om_p))}$} \\ 
			$h$	&	error	&	rate	&	error	&	rate	&	error	&	rate	&	error	&	rate	
			\\ 
			\hline
			1/8	&	3.845E-02	&	--	&6.217E-01	&	--	&4.329E-01	&	--	&5.596E-01	&	--
			\\
			1/16	&1.921E-02	&	1.0	&3.361E-02	&4.2&2.120E-01	&1.0	&3.300E-01	&0.8
			\\
			1/32	&9.602E-03	&1.0&1.655E-02	&1.0	&1.054E-01	&	1.0	&1.805E-01	&0.9
			\\
			1/64	&4.801E-03	&1.0&8.301E-03	&1.0   &5.248E-02	&1.0	&9.501E-02	&0.9
			\\
			1/128  &2.400E-03  &1.0 &4.243E-03   &1.0  &2.618E-02  &1.0&4.829E-02  &1.0
			\\
			1/256 &1.200E-03 &1.0 &2.236E-03 &0.9 &1.308E-02 & 1.0&2.213E-02 &1.1 
			\\
			\hline
			&
			\multicolumn{2}{c|}{$ \|e_{pp}\|_{l^{\infty}(L^2(\Om_p))} $} & \multicolumn{2}{c|}{$\|{\e}_{s}\|_{l^{\infty}(H_1(\Om_p))}$}
			& \multicolumn{2}{c|}{$\|e_{\lambda}\|_{l^{2}(L^2(\Gamma_{fp}))}$}
			\\
			$h$	&	error	&	rate	&	error	&	rate &	error	&	rate \\
			\hline
			1/8		&6.438E-01	&	--	&7.155E-00	&	--	&5.329E-01	&	--		\\
			1/16	&4.047E-01	&0.7	&2.018E-00	&	1.8		&2.343E-01	&1.2		\\
			1/32	&2.058E-01	&1.0	&5.165E-01	&	2.0			&1.143E-01	&	1.0		\\
			1/64	&1.033E-01	&1.0     &1.296E-01	&	2.0	&5.679E-01	&	1.0		\\
			1/128  &5.172E-02 &1.0  &3.253E-02  &2.0	&2.835E-02	&	1.0\\
			1/256 &2.587E-02 &1.0 &8.093E-03 &2.0 &1.417E-02	&	1.0\\
			\hline
		\end{tabular}
	\end{center}
	\label{T1}
\end{table}

\def\arraystretch{1.1}
\begin{table}[ht!]
\caption{Example 1: relative numerical errors and convergence rates with the higher order spaces.}  
	\begin{center}
		\begin{tabular}{c|cc|cc|cc|cc}
			\hline
			\multicolumn{9}{c}{$\mathcal{P}_2 - \mathcal{P}_1,\, \mathcal{RT}_1-\mathcal{P}_1^{dc},\, \mathcal{P}_2,\, \mathcal{P}_1$} \\
			\hline
			& \multicolumn{2}{c|}{$\|\e_{f}\|_{l^{2}(H^1(\Om_f))}$} & \multicolumn{2}{c|}{$\|e_{fp}\|_{l^{2}(L^2(\Om_f))} $} & \multicolumn{2}{c|}{$\|\e_{p}\|_{l^{2}(L^2(\Om_p))}$} &	
			\multicolumn{2}{c}{$\|\div \e_{p}\|_{l^{2}(L^2(\Om_p))}$}
			\\ 
			$h$	&	error	&	rate	&	error	&	rate	&	error	&	rate&	error	&	rate	
			\\ 
			\hline
             1/8	  &5.535E-03	&	--	&1.227E-01	&	--	&6.983E-02	&	--	  	&4.902E-01	&	--		\\
			1/16	&1.487E-03	&1.9	&2.625E-02	&2.2&3.133E-02	 &1.2     	&1.994E-01	&1.3	\\
			1/32	&3.791E-04	&2.0   &6.911E-03	&1.9	&1.130E-02	&1.5	&6.508E-02	&1.6	\\
			1/64	&9.796E-05	&2.0&1.690E-03	&2.0   &2.887E-03	&2.0		&1.669E-02	&1.9	\\
			1/128  &2.738E-05  &1.8&4.194E-04   &2.0  &6.084E-04  &2.2    &4.172E-03  &2.0\\
			1/256 &6.853E-06 &2.0 &1.042E-04 &2.0 &1.527E-04 & 2.1           &1.123E-03 &1.8  \\
			\hline
			&
			\multicolumn{2}{c|}{$ \|e_{pp}\|_{l^{\infty}(L^2(\Om_p))} $} & \multicolumn{2}{c|}{$\|{\e}_{s}\|_{l^{\infty}(H_1(\Om_p))}$}
			& \multicolumn{2}{c|}{$\|e_{\lambda}\|_{l^{2}(L^2(\Gamma_{fp}))}$} 
			\\
			$h$	&	error	&	rate	&	error	&	rate &	error	&	rate \\
			\hline
			1/8	     &1.097E-03	&	--		&9.885E-01	&	--	&7.529E-03	&	--\\
			1/16		&2.753E-03	&	2.0	&2.436E-01	&2.0&1.877E-03	&2.0	\\
			1/32		&6.856E-04	&	2.0	&6.063E-02	&2.0&4.692E-04	&2.0	\\
			1/64    &1.693E-04	&	2.0	&1.514E-02	&2.0 	&1.174E-04	&2.0\\
			1/128   &4.170E-05  &2.0&3.785E-03 &2.0 &2.934E-05	&2.0\\
			1/256 &1.037E-05 &2.0 &9.461E-03 &2.0 &7.335E-06	&2.0\\
			\hline
		\end{tabular}
	\end{center}
	\label{T3}
\end{table}

\subsection{Example 2: arterial flow}\label{section6.2}

In this example we simulate blood flow in a section of an artery, using a benchmark problem. For simplicity we adopt a two-dimensional model representing a lateral cross-section. The computational domain is $\Omega = (0,L)\times(-R - r_p, R + r_p)$.  
The arterial wall represented by the poroelastic region $\Omega_p$ consists of top and bottom layers of thickness $r_p$. The lumen is represented by the fluid domain $\Omega_f$. We assume small deformation and consider a fixed in time computational domain, see Figure~\ref{mesh11} (right), which also shows the inlet, outlet, and external boundaries, denoted by $\Gamma_\star^{in}$, $\Gamma_\star^{out}$, and $\Gamma_\star^{ext}$, respectively. We modify the governing equation in $\Omega_p$ as follows:
\begin{eqnarray}
\rho_{p}\partial_{tt}{\bbeta}_p+\xi \bbeta_p-\div\bs_p(\bbeta_p,p_p)&=&0, \quad \mbox{in}\quad \Om_p\times(0,T]. \label{Biot1_1}
\end{eqnarray}
The additional term $\xi \bbeta_{p}$ accounts for the recoil due to the circumferential strain, which is lost in the two-dimensional model. It acts like a spring term connecting the top and bottom structures. 

The blood flow is driven by a time-dependent pressure inflow boundary condition:

\begin{eqnarray*}
	p_{in}(t)=\begin{cases}
	\frac{P_{max}}{2}(1-\cos(\frac{2\pi t}{T_{max}})) & \text{if} \,\ t\leq T_{max}; \\
	0 & \text{if}\,\  t>T_{max},
	\end{cases}
\end{eqnarray*}
where $P_{max}=13,334$ dyne/cm$^2$ and $T_{max}=0.003$ s. More precisely, we 
prescribe the normal fluid stress at the inlet and outlet boundary as follows:
\begin{equation}
\bs_f\n_f = -p_{in}\n_f \ \text{ on } \Gamma_f^{in}\times (0,T],
\quad
\bs_f\n_f = 0 \ \text{ on } \Gamma_f^{out}\times (0,T].
\end{equation}
We assume that the arterial wall is fixed at the inlet and outlet boundaries:
\begin{equation*}
\bbeta_{p}=0 \ \text{ on } \Gamma_p^{in}\cup\Gamma_p^{out}\times (0,T].
\end{equation*}
On the external wall boundaries we set external ambient pressure and zero tangential deformation:
\begin{equation*}
  (\bs_p\n_{p})\cdot \n_p = 0, \quad 
\bbeta_{p}\cdot {\btau}_{p}=0,  \quad \text{on } \Gamma_p^{ext}\times (0,T].
\end{equation*}
Finally, we impose a drained boundary condition on the inlet and outlet wall boundaries, as well as zero pressure on the external wall boundaries:
\begin{equation*}
\bu_{p}=0 \ \text{ on } \Gamma_p^{in}\cup\Gamma_p^{out}\times (0,T], \quad
p_{p}=0 \ \text{ on } \Gamma_p^{ext}\times (0,T].
\end{equation*}

The parameter values in this model are taken from \cite{bukavc2015partitioning} and are reported in Table \ref{T2}. They fall within the range of physical values for blood flow. We emphasize that these realistic parameter values are challenging for numerical simulation, as they vary over many orders of magnitude, including extremely small permeability and storativity and large Lam\'{e} coefficients.

\def\arraystretch{1.1}
\begin{table}[ht!]
  	\caption{Example 2: geometry, poroelasticity and fluid parameters.}
	\begin{center}
		\begin{tabular}{c  c c c}
			\hline
			Parameter     &Units                                     & Symbol                                                 & Values                          \\ \hline
			Radius    &cm                                                      & $R$                     &$0.5$              \\
			Length&cm                                                      & $L$                  &$6$                   \\        
			wall thickness&cm                                         &$r_p$                      &$0.1$          \\
			 Total time&s         &$T$                                                        &0.006    \\
			wall density&g/cm$^3$            &$\rho_{p}$             &$1.1$                \\      
			Spring coeff.&dyn/cm$^4$         & $\xi$               & $5\times 10^{7}$                 \\    
			Fluid density &g/cm$^3$                                              &$\rho_{f}$                 &$1$   \\
				Dyn. viscosity&g/cm-s                  &$\mu_f$                     &$0.035$          \\                                                                   
			Mass storativity &cm$^{2}$/dyn         & $s_0$                  & $5\times 10^{-6}$     \\
			  Permeability&cm$^2$& $K$   & $diag(5,5)\times10^{-9}$\\
			 Lam\'{e} coeff.&dyn/cm$^2$    & $\mu_p$       &$4.28\times10^{6}$ \\
			 Lam\'{e} coeff.&dyn/cm$^2$    & $\lambda_p$    & $1.07\times 10^{6}$                   \\
			 			BJS coeff.      &                & $\alpha_{BJS}$                                 & 1.0 \\                     
			 Biot-Willis constant    &               & $\alpha$        & 1.0 \\      
			\hline
		\end{tabular}
	\end{center}
	\label{T2}
\end{table}


The propagation of the pressure wave is analyzed over the time interval $[0, 0.006]$ s. The final time is selected so that the pressure wave reaches the outflow boundary. 
Figure~\ref{pressure} shows the fluid pressure and velocity fields at the times $t=1.8, 3.6, 5.4$ ms. For visualization purpose, the vertical deformation is magnified 40 times. The plots show that the variable inflow pressure generates a pressure wave moving from left to right. The fluid velocity and wall deformation are largest at the peak of the wave. The simulation results are similar to the ones obtained in \cite{bukavc2015partitioning} using a different numerical approach to model the fluid-structure interface.

\begin{figure}[ht!]
	\includegraphics[trim=50 50 115 20,scale=0.28]{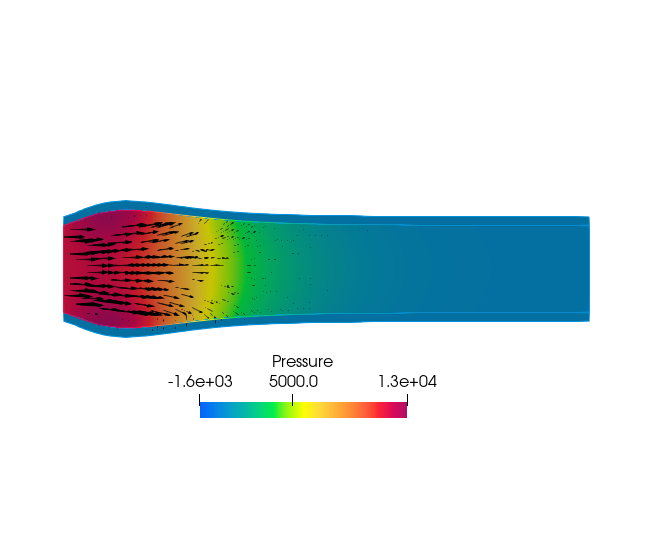}
	\includegraphics[trim=0 50 115 20,scale=0.28]{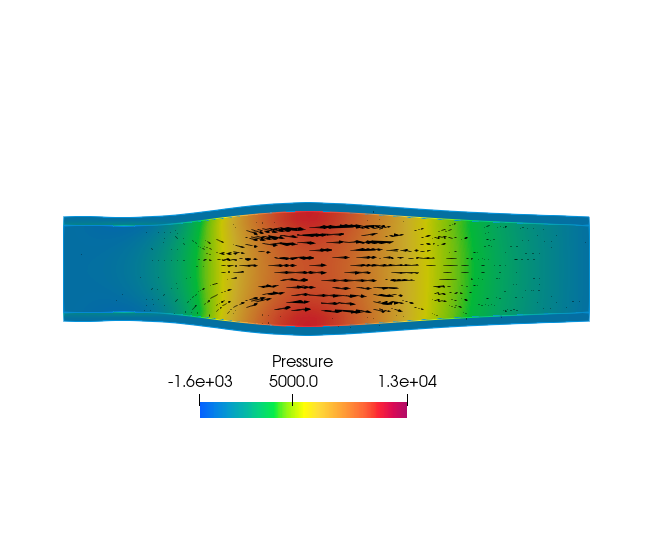}
	\includegraphics[trim=0 50 115 20,scale=0.28]{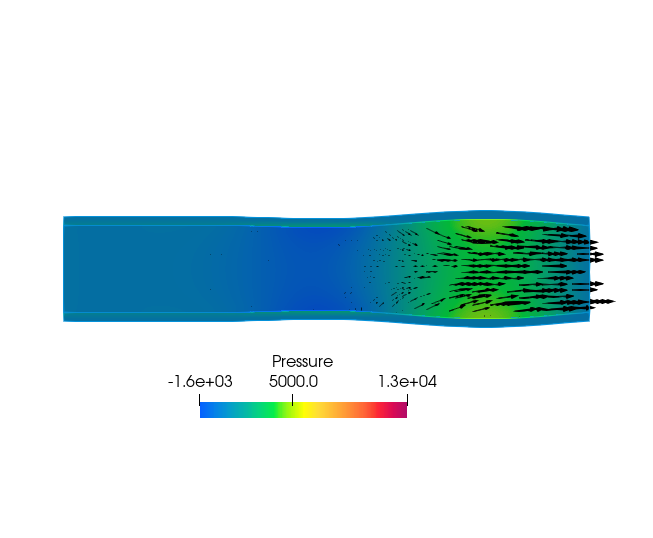}
	\caption{Example 2: fluid pressure (color) and velocity (arrows) at times $t=1.8$ ms, $t=3.6$ ms, and $t=5.4$ ms (from left to right). The velocity arrows are scaled proportionally to the vector magnitude.}
	\label{pressure}
\end{figure}

Figures~\ref{displacement}--\ref{fluid} show the computed solution variables at the three different times along the top artery-wall interface. The normal vector $\n$ is set to point upward and the tangential vector $\btau$ points to the right. As expected, the locations of the peaks in the normal displacement $\bbeta_p \cdot \n$ displayed in Figure~\ref{displacement} and the normal filtration velocity $\bu_p\cdot \n$ displayed in Figure~\ref{filtration} (top) match, and they also match the location of the peaks of the pressure wave, cf. Figure~\ref{pressure}.
On the other hand, the peaks in $\bu_p\cdot \n$ lag the peaks in the normal fluid velocity $\bu_f\cdot \n$ displayed in Figure~\ref{fluid} (top). This is consistent with the normal velocity interface condition \eqref{eq:mass-conservation}, due to the effect of the normal structure velocity $\partial_{t}{\bbeta}_p\cdot\n_p$. Similarly the peaks in the tangential Darcy velocity $\bu_p\cdot \btau$ shown in Figure~\ref{filtration} (bottom) lag the peaks in the tangential fluid velocity $\bu_f\cdot \btau$ shown in Figure~\ref{fluid} (bottom). This can be explained by the significantly smaller magnitude of $\bu_p\cdot \btau$ compared to $\bu_f\cdot \btau$, which also justifies neglecting $\bu_p\cdot \btau$ in the BJS condition \eqref{Gamma-fp-1}.

\begin{figure}[ht!]
	\includegraphics[scale=0.14]{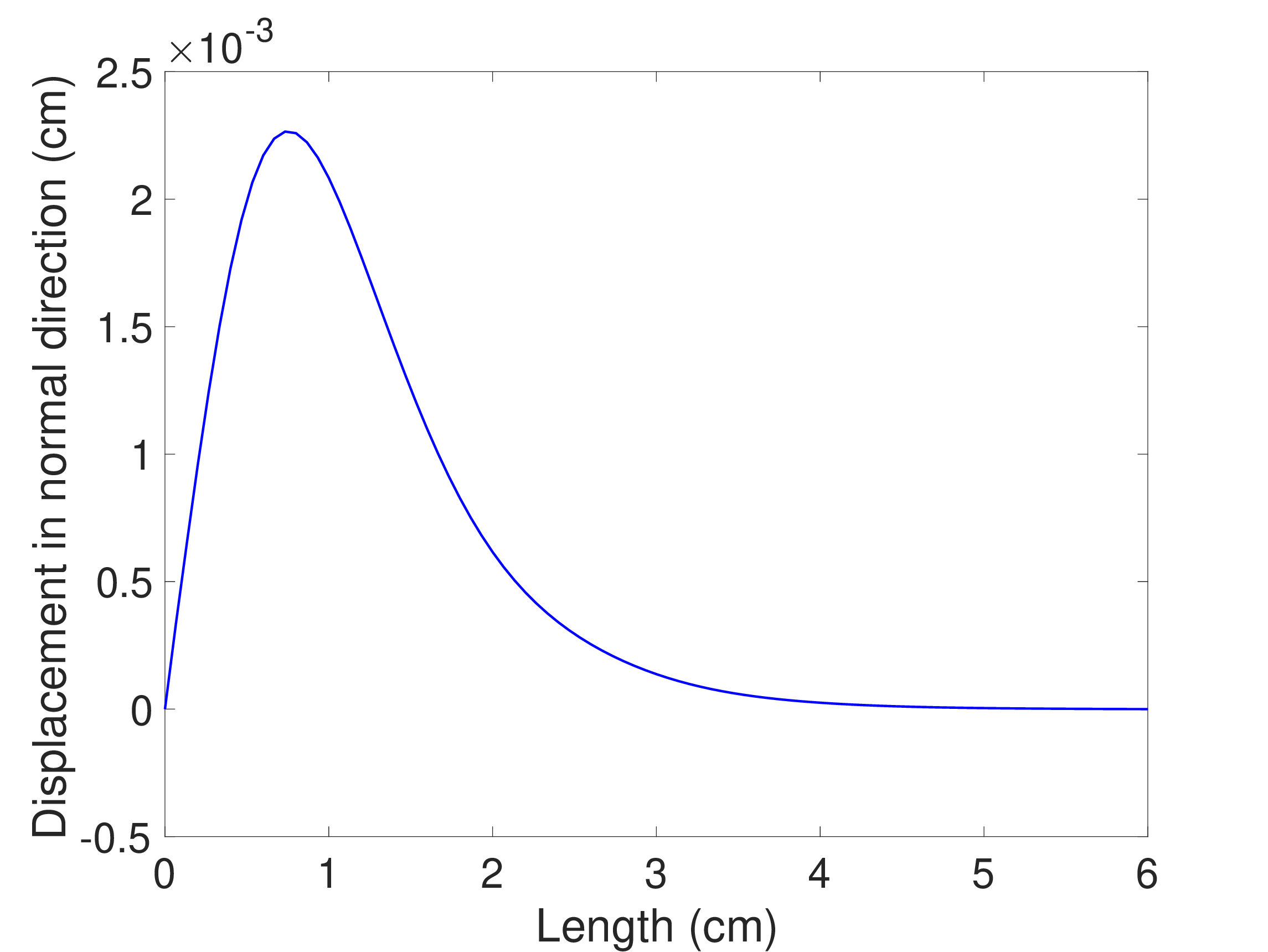}
	\includegraphics[scale=0.14]{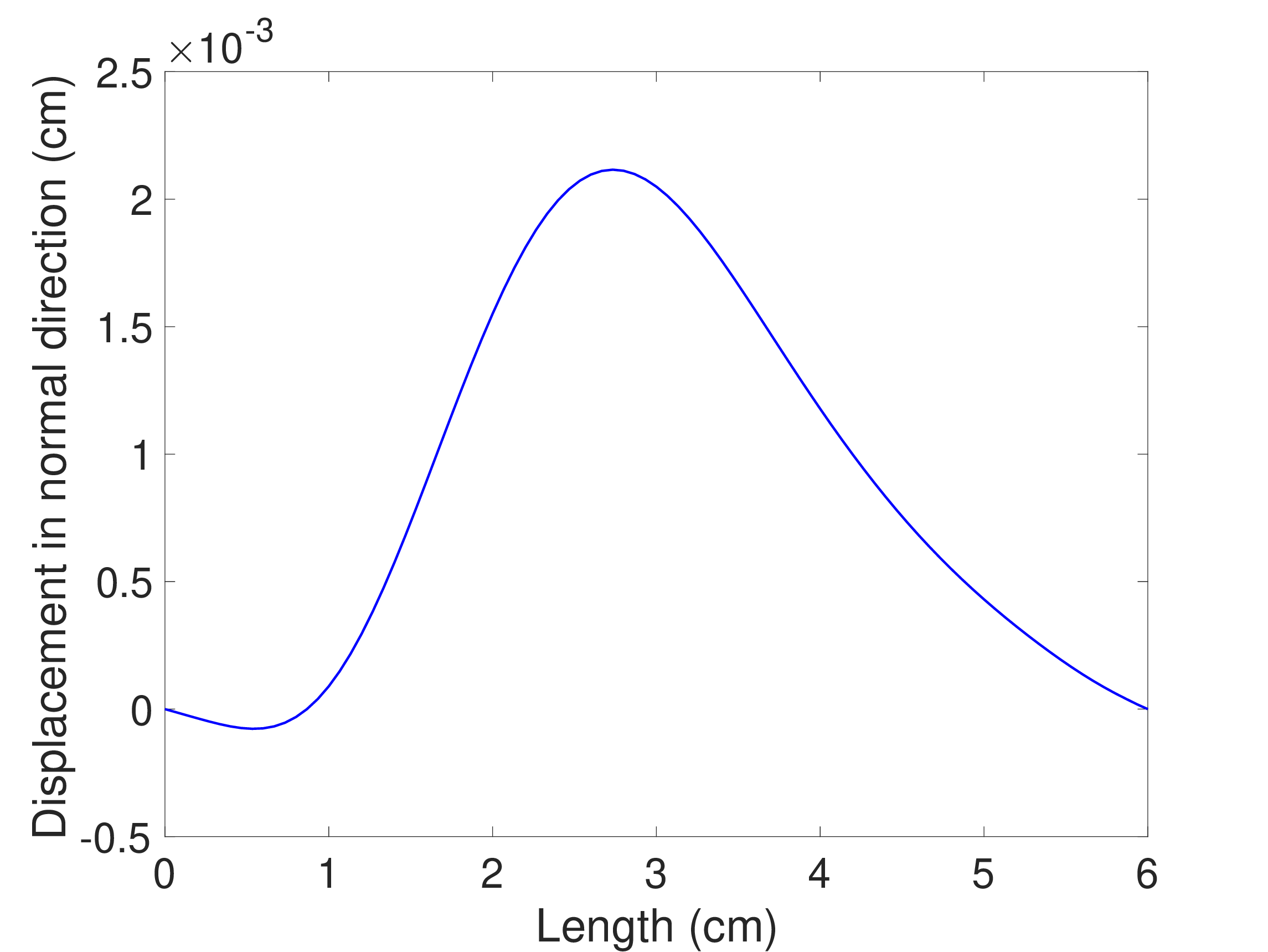}
	\includegraphics[scale=0.14]{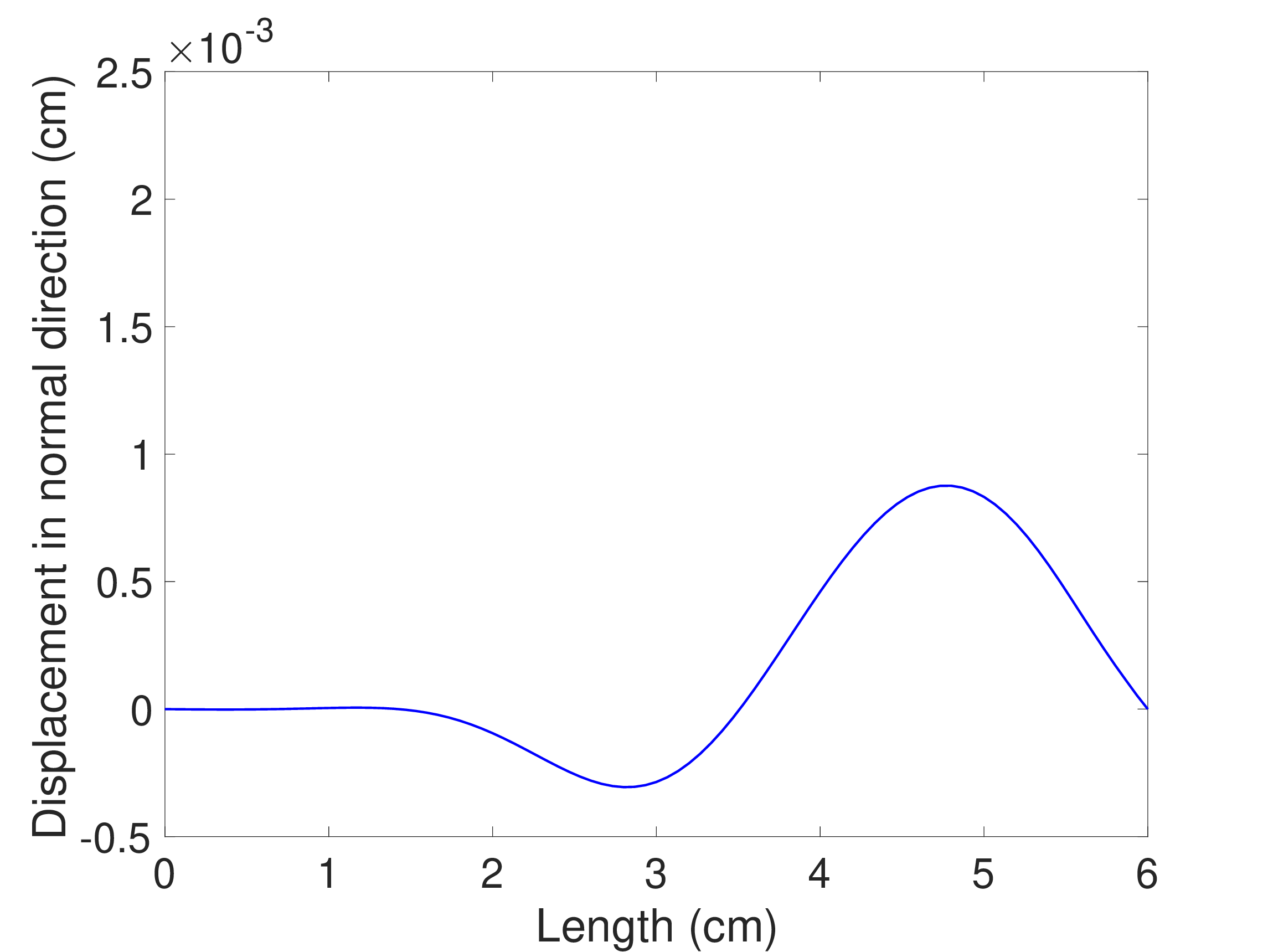}
	\caption{Example 2: structure displacement in the normal direction $\bbeta_p \cdot \n$ along the top interface at times $t=1.8$ ms, $t=3.6$ ms, and $t=5.4$ ms (left to right). }
	\label{displacement}
\end{figure}

\begin{figure}[ht!]
	\includegraphics[scale=0.14]{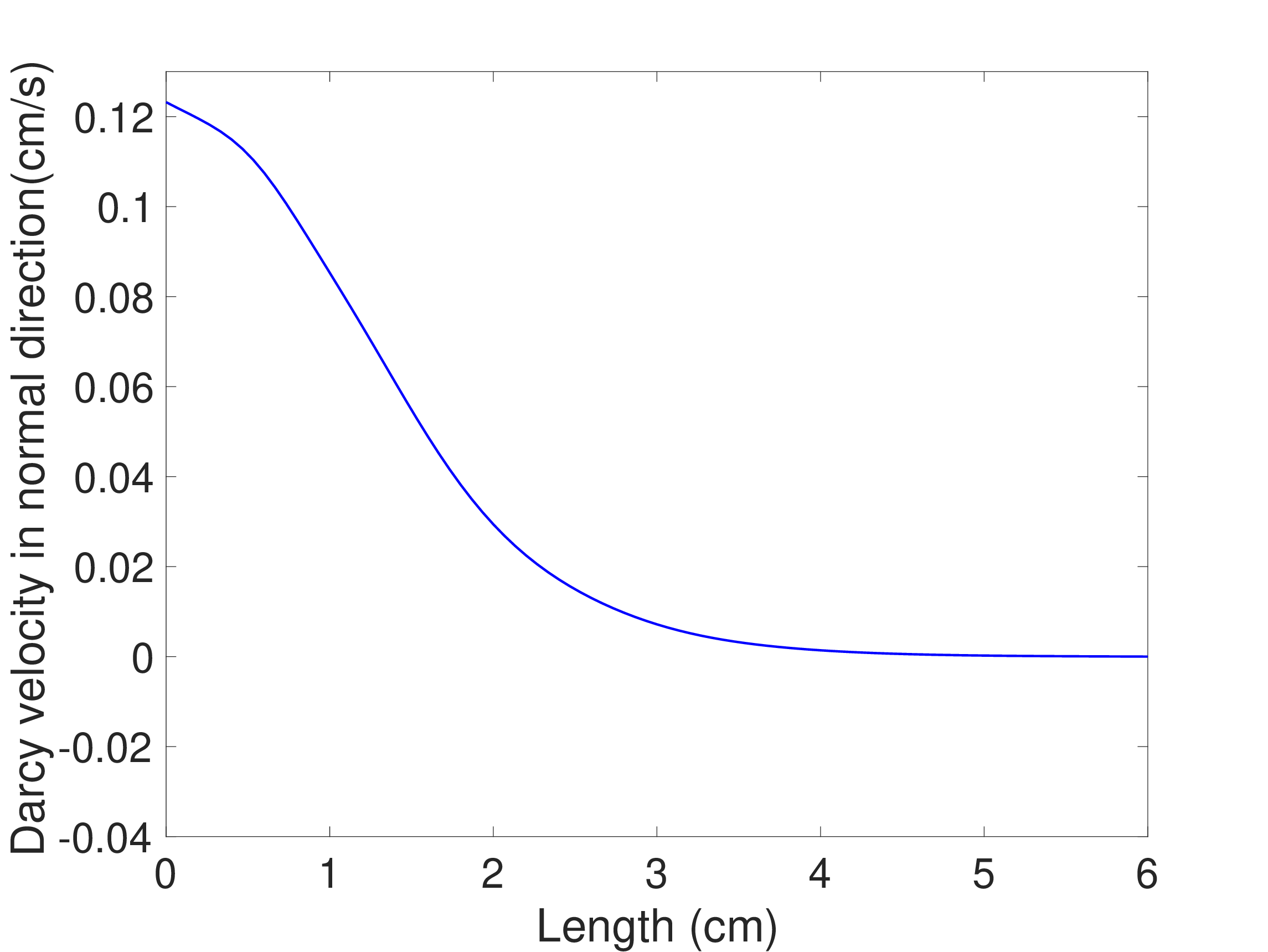}
	\includegraphics[scale=0.14]{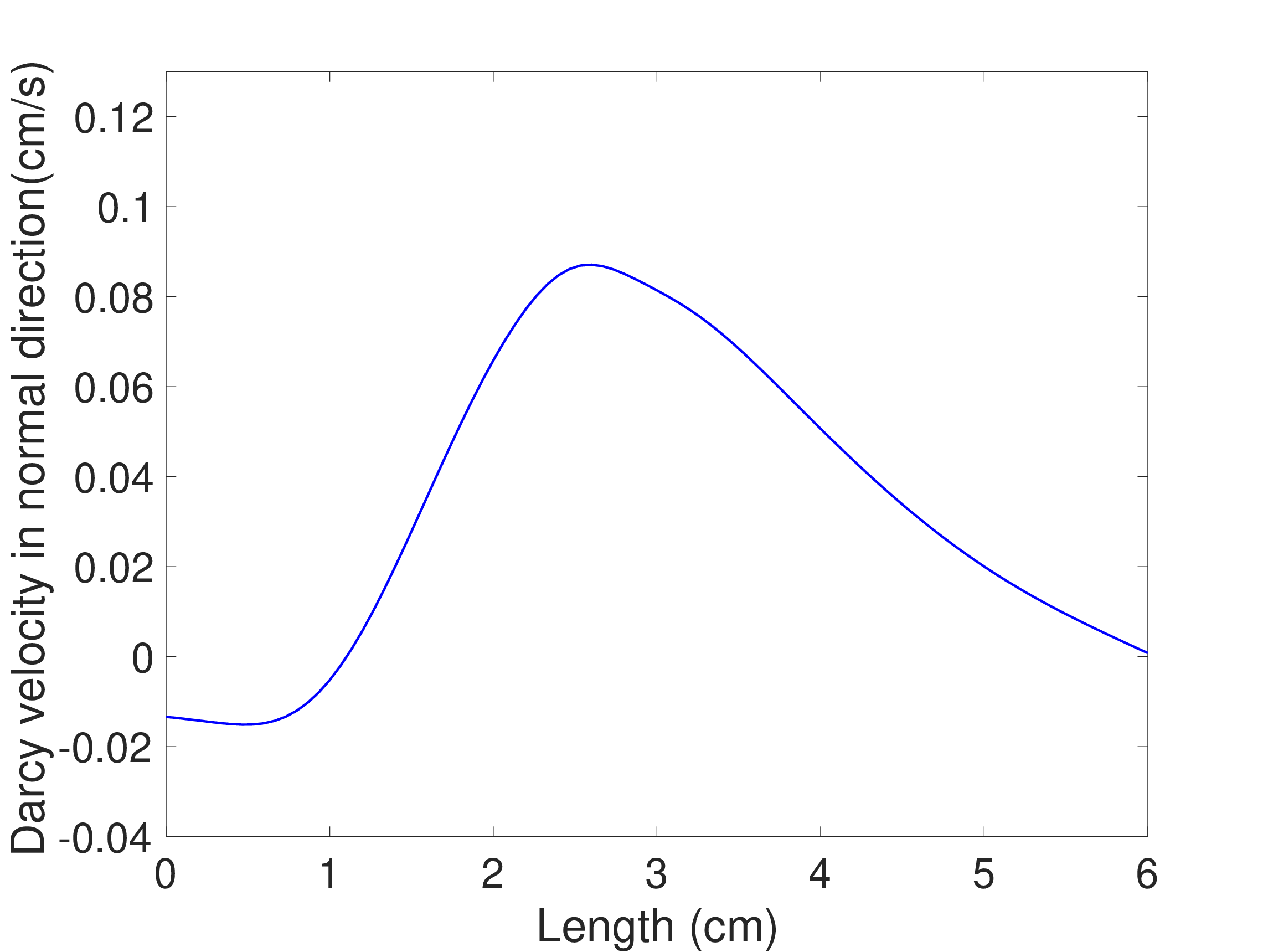}
	\includegraphics[scale=0.14]{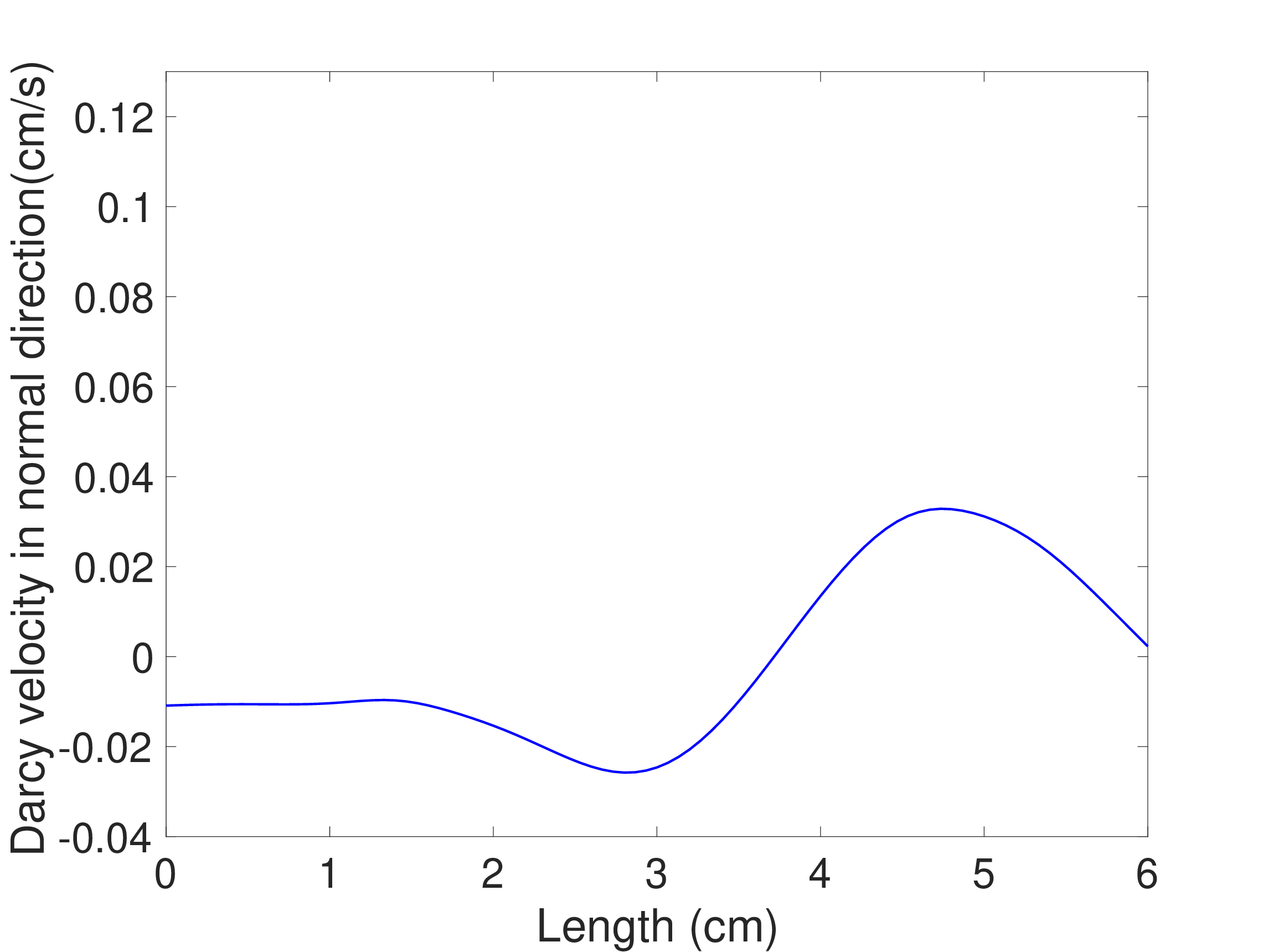}
	\includegraphics[scale=0.14]{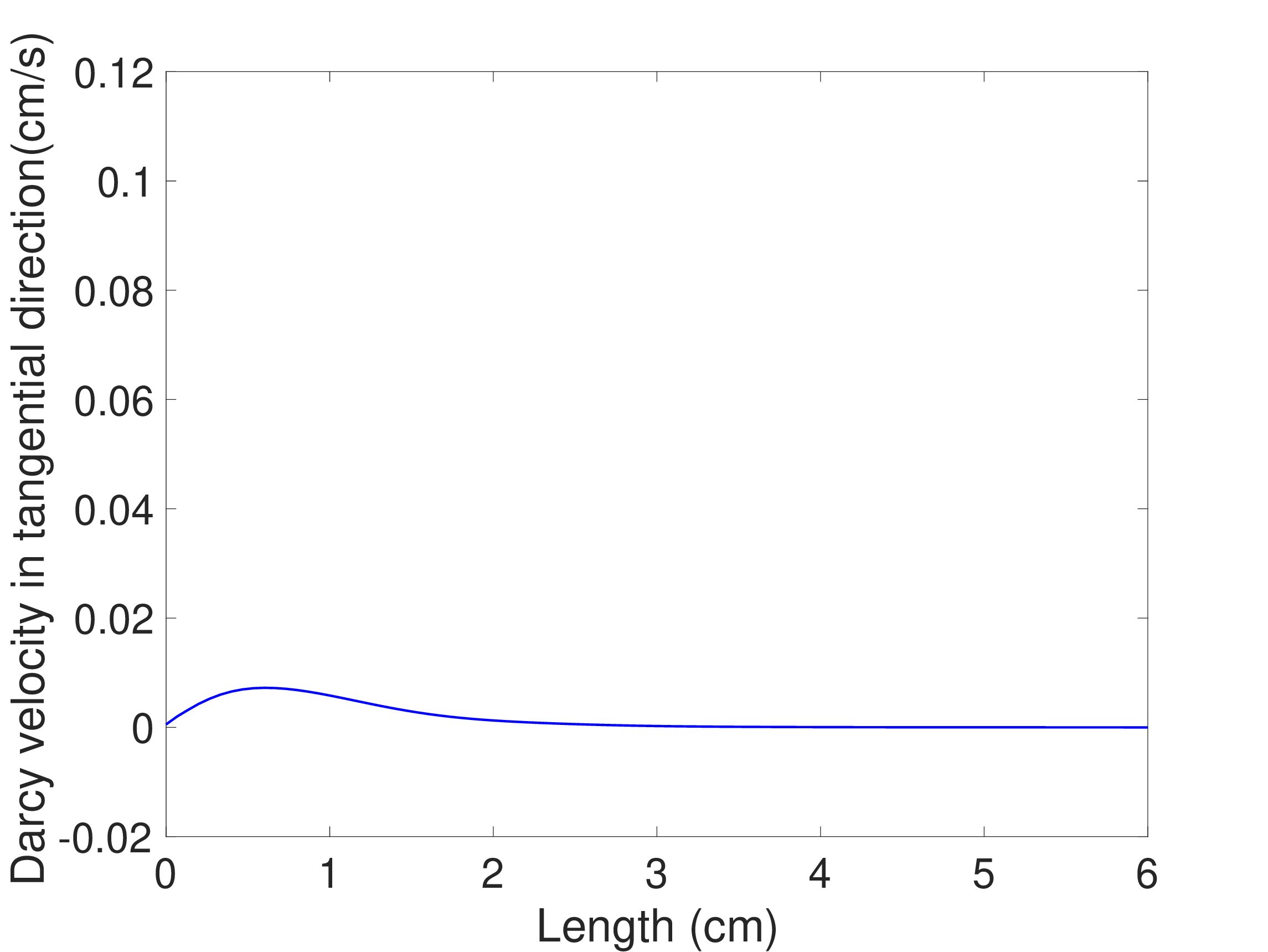}
	\includegraphics[scale=0.14]{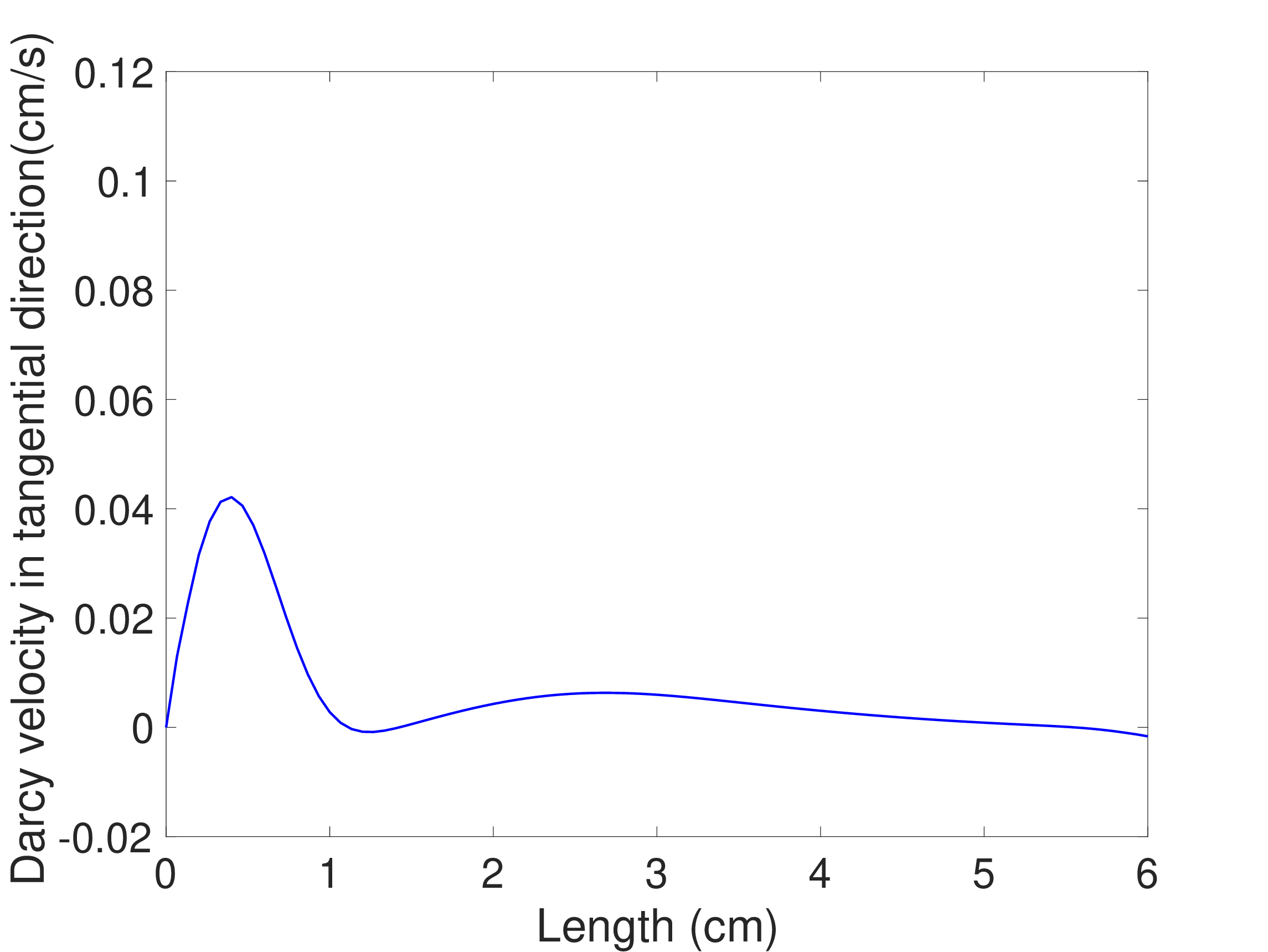}
	\includegraphics[scale=0.14]{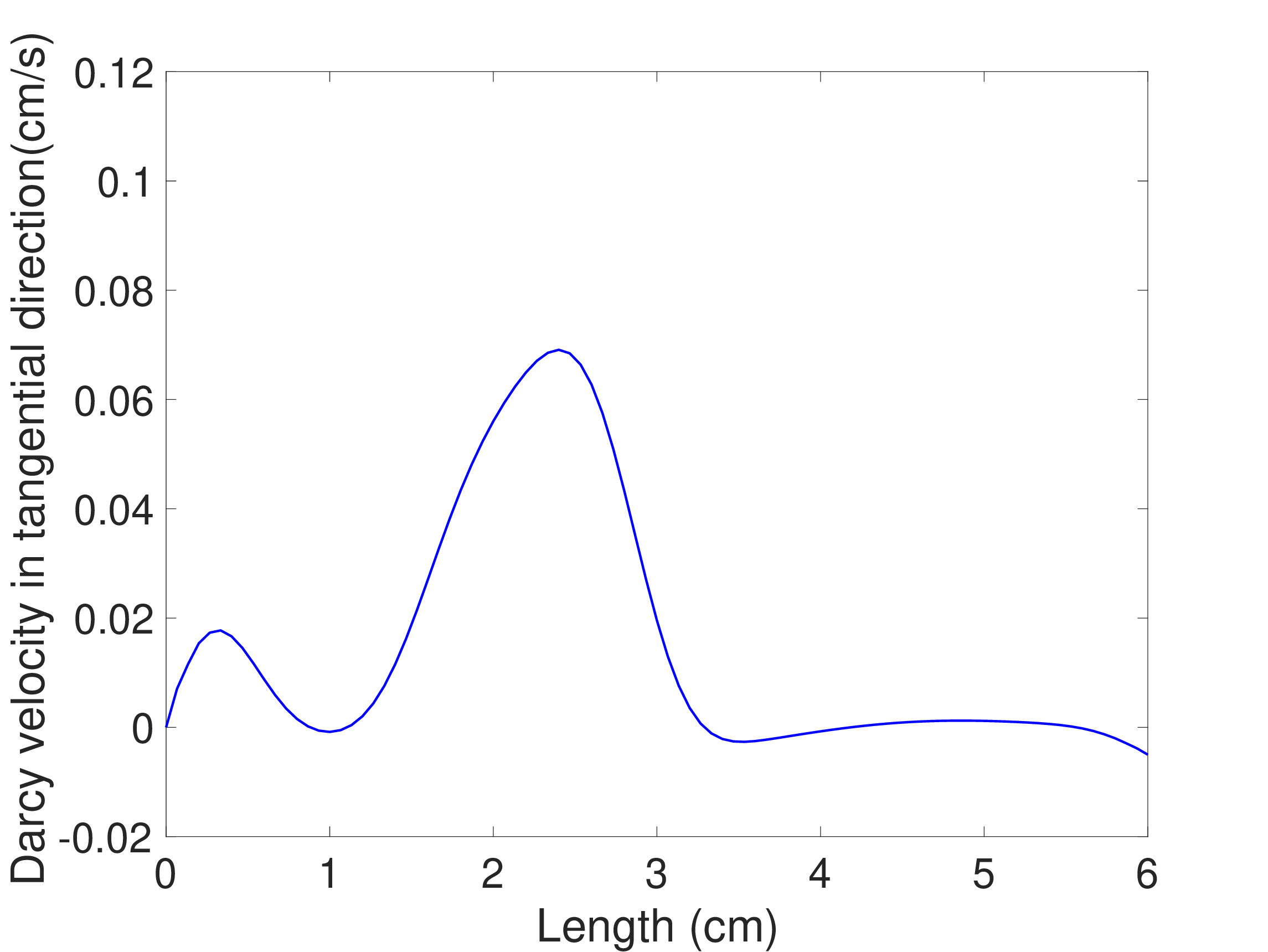}
	\caption{Example 2: normal filtration velocity $\bu_{p}\cdot \n$ (top) and tangential filtration velocity $\bu_{p}\cdot \btau$ (bottom) along the top interface at times
          $t=1.8$ ms, $t=3.6$ ms, and $t=5.4$ ms (left to right).}
	\label{filtration}
\end{figure}

\begin{figure}[ht!]
	\includegraphics[scale=0.14]{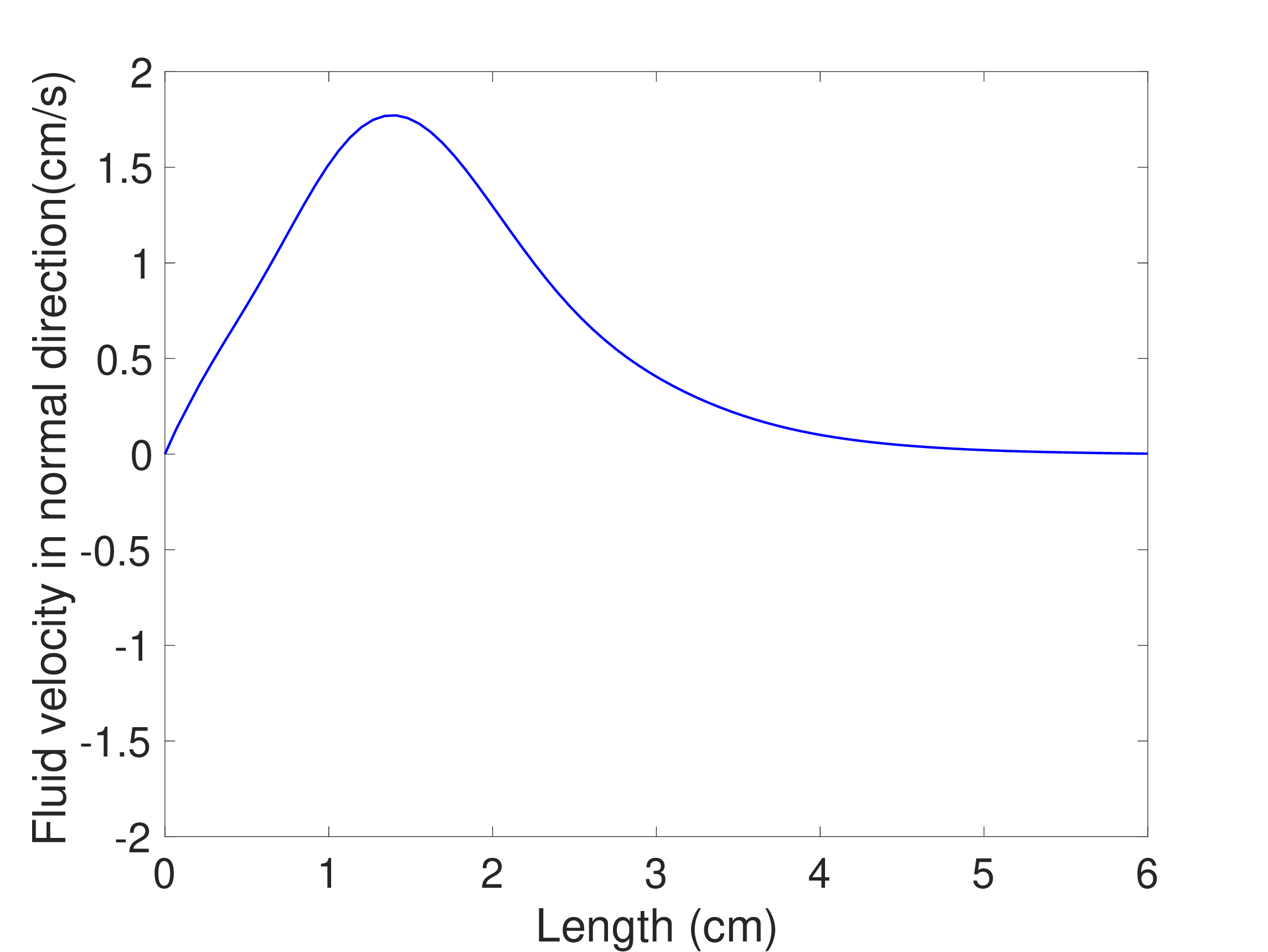}
	\includegraphics[scale=0.14]{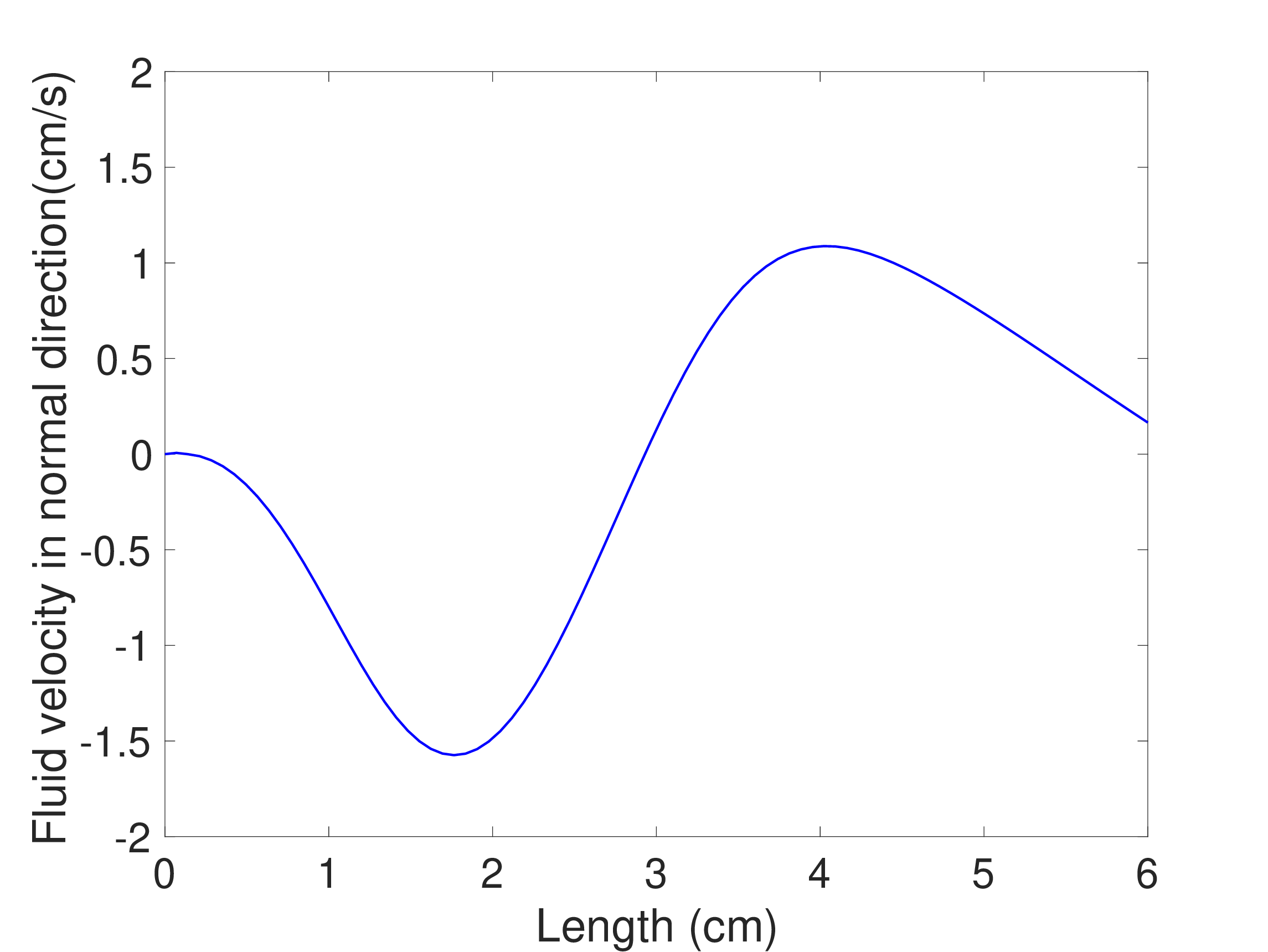}
	\includegraphics[scale=0.14]{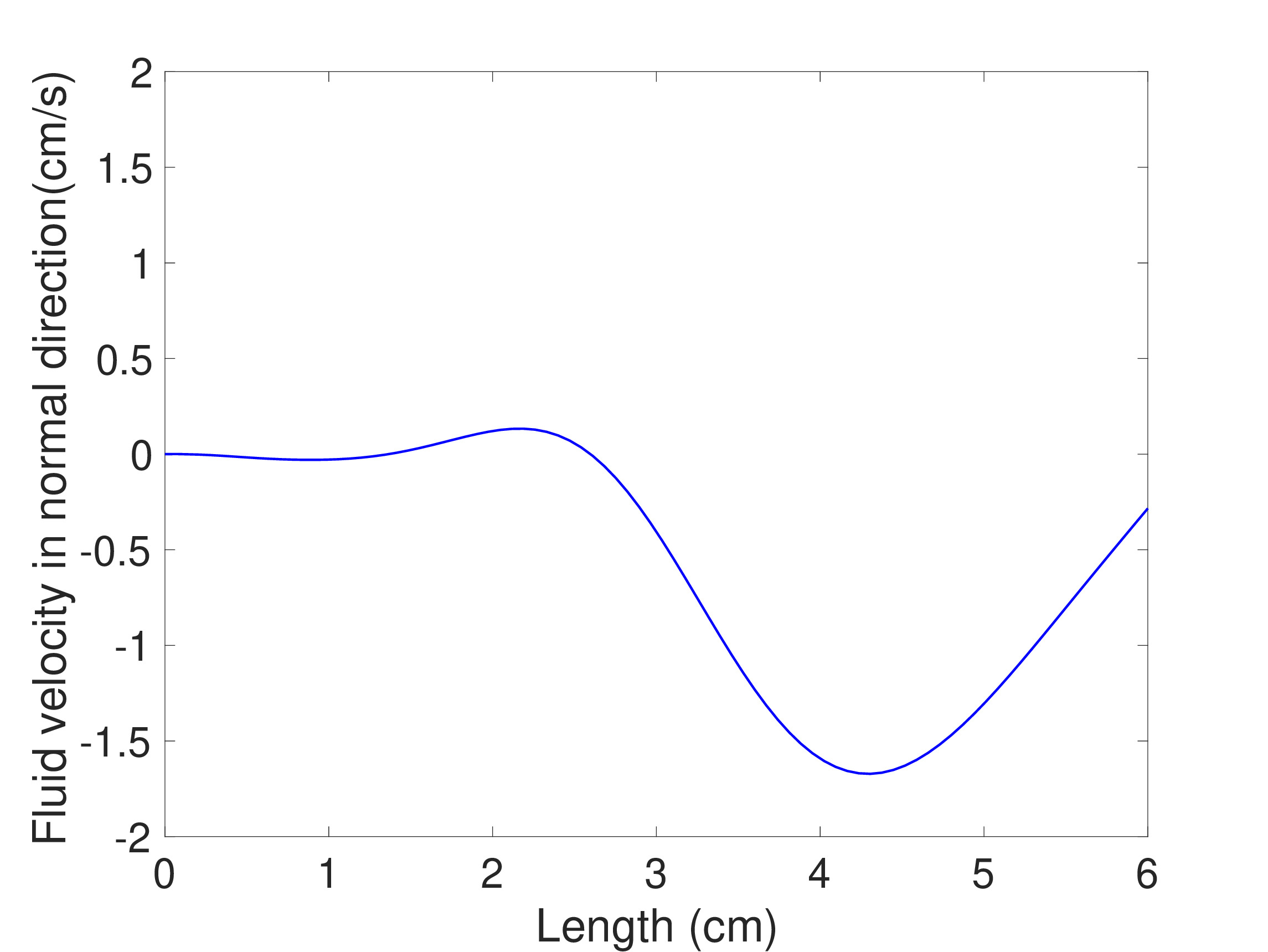}
	\includegraphics[scale=0.14]{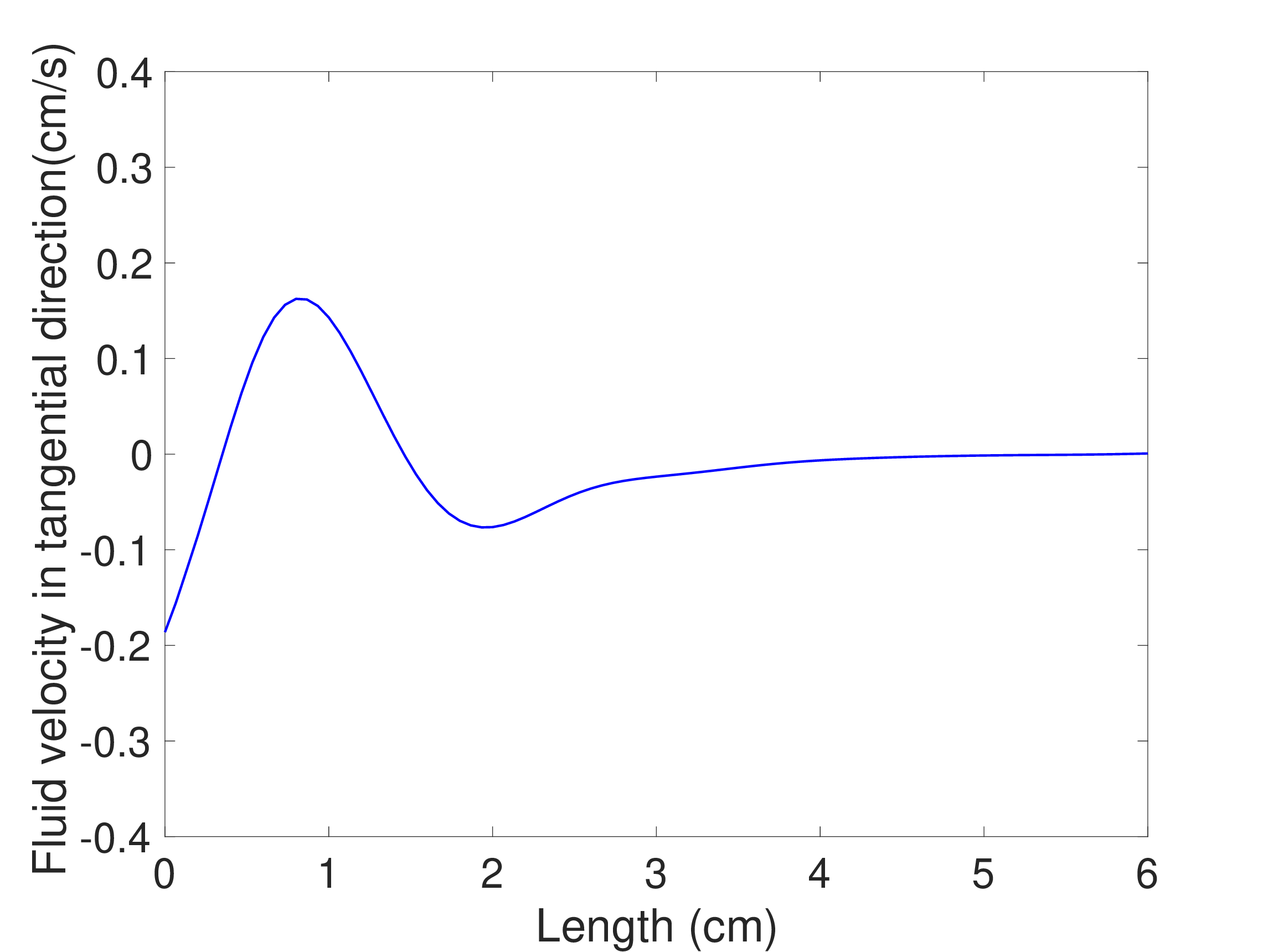}
	\includegraphics[scale=0.14]{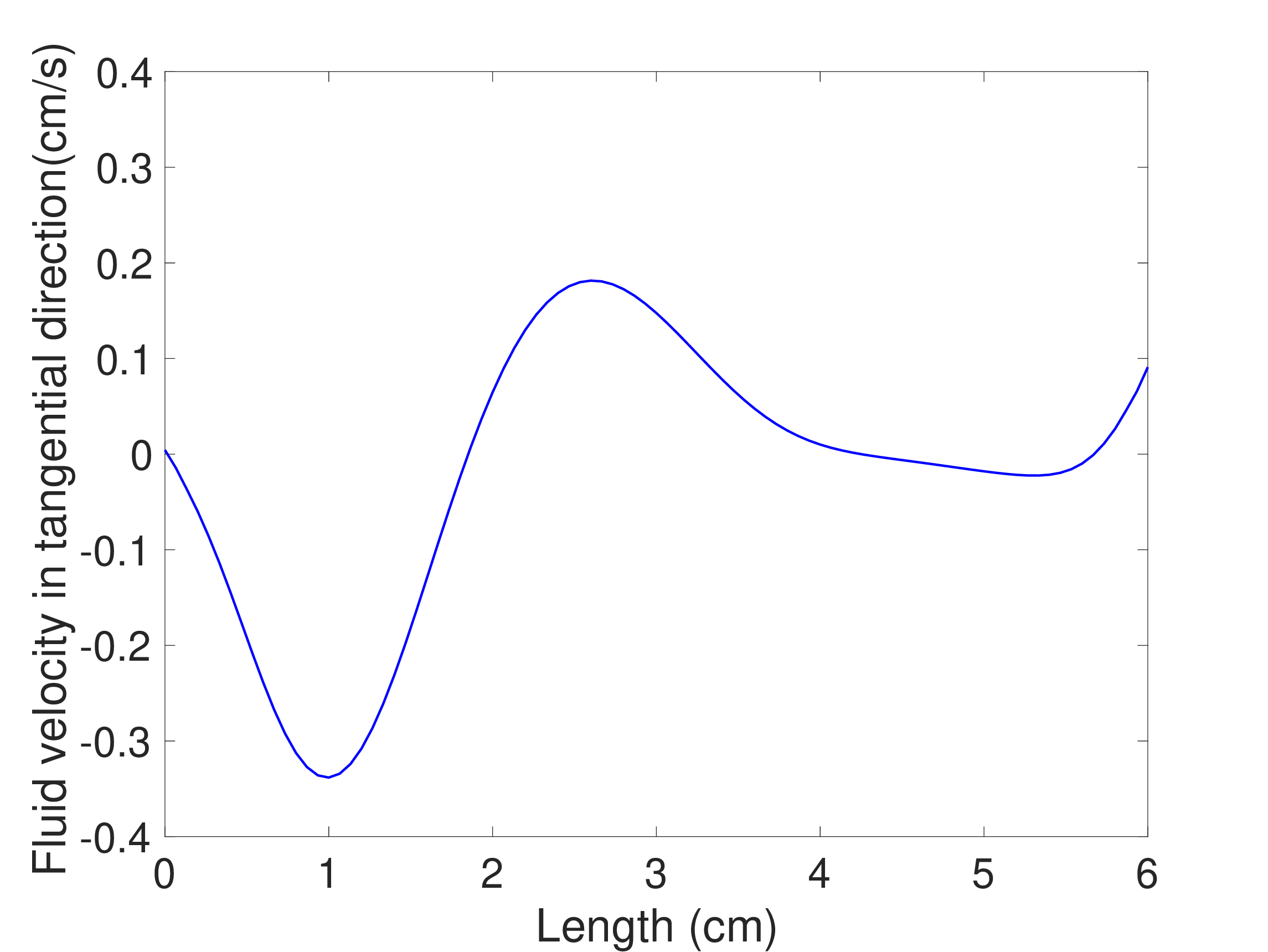}
	\includegraphics[scale=0.14]{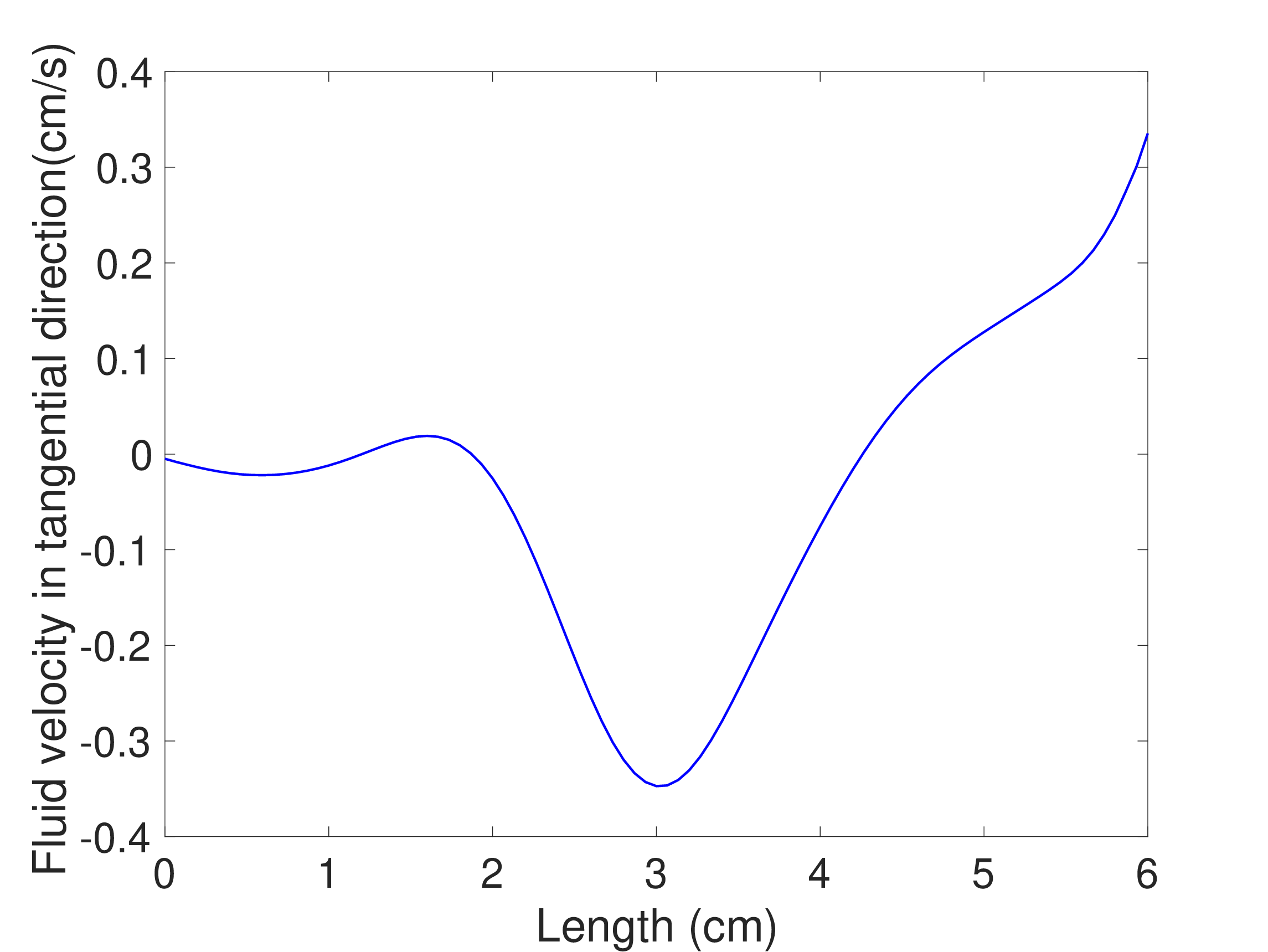}
	\caption{Example 2: normal fluid velocity $\bu_{f}\cdot \n$ (top) and tangential fluid velocity $\bu_{f}\cdot \btau$ (bottom) along the top interface at times
          $t=1.8$ ms, $t=3.6$ ms, and $t=5.4$ ms (left to right).}
	\label{fluid}
\end{figure}

In addition, we note the negative values of $\bu_{f}\cdot \btau$ in Figure~\ref{fluid} (bottom), indicating backward tangential flow along the arterial wall in certain locations and at certain times. Comparing to the location of the pressure wave in Figure~\ref{pressure}, it is clear that this backward flow is caused by the negative pressure gradient in the lateral direction behind the wave. Indeed, the flow along the wall exhibits features of Darcy flow, where the pressure gradient effect is dominant. On the contrary, the flow within the lumen is dominated by the viscous and inertial effects in the Navier-Stokes momentum equation, so backward flow is not observed, as evident from Figure~\ref{pressure}. In conclusion, the simulation results illustrate the applicability of the method to modeling poroelastic wall effects in arterial flows and its ability to handle challenging parameter regimes.

\section{Conclusions}\label{section7}
In this paper we studied mathematical and numerical modeling of fluid-poroelastic structure interaction based on the fully dynamic Navier-Stokes-Biot system. The model utilizes a velocity-pressure fluid flow formulation, a displacement-based elasticity formulation, and a mixed velocity-pressure Darcy flow formulation. In the weak formulation, a stress/pressure Lagrange multiplier is introduced to impose weakly continuity of normal velocity. We established existence and uniqueness of a solution to the weak formulation, using a Galerkin approach, the theory of ordinary differential equations, energy estimates, and compactness arguments. We further developed a fully discrete finite element method for the numerical approximation of the model and established its well posedness using an induction argument, as well as a priori error estimates. We presented numerical experiments verifying the theoretical convergence rates and illustrating the applicability of the method to a blood flow benchmark problem with challenging physical parameters.
	
\bibliographystyle{abbrv}
\bibliography{NS-Biot}

\end{document}